\numberwithin{equation}{section}
\newtheorem{theorem}{Theorem}
\newtheorem{remark}[theorem]{Remark}
\newtheorem{definition}[theorem]{Definition}
\newtheorem{proposition}[theorem]{Proposition}
\newtheorem{corollary}[theorem]{Corollary}
\newcommand{\f}[1]{\pmb{#1}}
\DeclareMathOperator{\N}{\mathbb{N}}
\DeclareMathOperator{\R}{\mathbb{R}}
\DeclareMathOperator{\C}{\mathcal{C}}
\DeclareMathOperator{\F}{\mathcal{F}}
\DeclareMathOperator{\AC}{\mathcal{AC}}
\DeclareMathOperator{\V}{\f H^1_{0,\sigma}}
\DeclareMathOperator{\Vd}{(\f H^{1}_{0,\sigma})^*}
\DeclareMathOperator{\Ha}{\f L^2_{\sigma}}
\DeclareMathOperator{\Hb}{\f{H}^1_0}
\DeclareMathOperator{\Hc}{\f H^2}
\DeclareMathOperator{\He}{\f{H}^1}
\DeclareMathOperator{\Le}{\f{L}^2}
\newcommand{\Hrand}[1]{{\f H^{{#1}/{2}}(\partial\Omega)}}
\DeclareMathOperator{\Sr}{\mathbb{E}}
\DeclareMathOperator{\Se}{\mathcal{S}}
\DeclareMathOperator{\ra}{\rightarrow}
\DeclareMathOperator{\de}{\text{d}}
\DeclareMathOperator{\tr}{tr}
\newcommand{\dreidots}{\text{\,\multiput(0,-2)(0,2){3}{$\cdot$}}\,\,\,\,}
\newcommand{\dreidotkom}{\text{\,\multiput(0,0)(0,2){2}{$\cdot$}\put(0,0){,}}\,\,\,\,}
\newcommand{\br}[1]{\frac{\de #1}{\de t}}
\newcommand{\pat}[2]{\frac{\partial #1}{\partial #2}}
\DeclareMathOperator{\di}{\nabla \cdot}
\newcommand{\ov}[1]{\overline{#1}}
\newcommand{\rot}[1]{[ #1 ]_{\f X}}
\DeclareMathOperator{\curl}{\nabla \times}
\newcommand{\intte}[1]{\int_{0}^T{ #1} \de t}
\newcommand{\inttet}[1]{\int_{0}^{t}{ #1} \de s}
\newcommand{\inttett}[1]{\int_{0}^{t}\left [{ #1} \right ]\de s}
\renewcommand{\ll}[1]{\langle\hspace{-0.75mm}\langle{#1}\rangle\hspace{-0.75mm}\rangle}
\DeclareMathOperator{\sym}{{sym}}
\DeclareMathOperator{\Lap}{\Delta_{\f \Lambda}}
\DeclareMathOperator{\skw}{skw}
\newcommand{\sy}[1]{(\nabla \f #1)_{{\sym}}}
\newcommand{\sk}[1]{(\nabla \f #1)_{\skw}}
\renewcommand{\t}{\partial_t  }
\newcommand{\syn}[1]{(\nabla \fn {#1})_{\sym}}
\newcommand{\skn}[1]{(\nabla \fn {#1})_{\skw}}
\newcommand{\fn}[1]{\f {{#1}}_{n}}
\newcommand{\vv}{\tilde{\f v}}
\newcommand{\dd}{\tilde{\f d}}
\newcommand{\vvn}{\tilde{\f v}}
\newcommand{\ddn}{\tilde{\f d}}
\newcommand{\syvn}{(\nabla \tilde{\f v})_{{\sym}}}
\newcommand{\skvn}{(\nabla \tilde{\f v})_{\skw}}
\renewcommand{\o}{\otimes}
\newcommand{\tq}{\tilde{\f q}}
\definecolor{lightgray}{RGB}{174,205,238}
\newcommand{\HH}{\tilde{\f H}}
\newcommand\numberthis{\addtocounter{equation}{1}\tag{\theequation}}
\begin{document}
\author[R. Lasarzik]{
Robert Lasarzik\nofnmark\footnote{Weierstrass Institute \\
Mohrenstr. 39 \\ 10117 Berlin \\ Germany \\
E-Mail: robert.lasarzik@wias-berlin.de}}

\title[Approximation  and optimal control of dissipative solutions to the Ericksen--Leslie system]{Approximation  and optimal control of dissipative solutions to the Ericksen--Leslie system
}	
\nopreprint{2535}	
\selectlanguage{english}		
\subjclass[2010]{35A35, 35Q35, 49J20, 76A15}	
\keywords{Liquid crystal,
Ericksen--Leslie equation,
weak-strong uniqueness,
dissipative Solutions,
optimal control}
\maketitle
\begin{abstract}
We analyze the Ericksen--Leslie system equipped with the Oseen--Frank energy in three space dimensions. 
Recently, the author introduced the concept of  dissipative solutions. 
These solutions show several advantages in comparison to the earlier introduced measure-valued solutions. 
In this article, we argue that dissipative solutions can be numerically approximated by a relative simple scheme, which fulfills the norm-restriction on the director in every step. 
We introduce a semi-discrete scheme and derive an approximated version of the relative-energy inequality for solutions of this scheme. 
Passing to the limit in the semi-discretization, we attain dissipative solutions. 
Additionally, we introduce an optimal control scheme, show the existence of an optimal control and a possible approximation strategy. We prove that the cost functional is lower semi-continuous with respect to the convergence of this approximation and argue that
an optimal control is attained in the case that there exists 
 a solution admitting additional regularity. 
\end{abstract}

\setcounter{tocdepth}{2}
\tableofcontents
\section{Introduction}\label{sec:intro}
Solutions to nonlinear partial differential equations are often not  explicitly computable. Therefore, numerical schemes are needed to determine a good approximation of the actual solution.
Such a numerical approximation should rely on the analysis of the continuum system, in order to resemble its properties and especially converge in some sense to a solution for vanishing discretization parameters. 
In a recent series of papers,  different generalized solution concepts for the \textit{Ericksen--Leslie system} equipped with the \textit{Oseen--Frank energy} were investigated. 
The phletropa of  different solution concepts to the Ericksen--Leslie system ranges from strong~\cite{Pruess2,Pruessnematic,hong} over weak~\cite{allgemein,unsere,linliu1}, and measure-valued~\cite{masswertig} to \textit{dissipative solutions}~\cite{diss}.

In the paper at hand, we introduce a semi-discrete scheme approximating dissipative solutions~\cite{diss} of the Ericksen--Leslie system. Additionally, an \textit{optimal control problem for dissipative solutions} to the system is investigated.

 The Ericksen--Leslie equations describe the evolution of a nematic liquid crystal under flow. 
Nematic liquid crystals are anisotropic fluids. The rod- or disk-like molecules build, or are dispersed in, a fluid and are directionally ordered. 
This ordering and its direction has a substantial influence on the properties of the material such as light scattering or rheology. This gives rise to many applications, where \textit{liquid crystal displays} are only the most prominent ones. 
Other important application possibly arise in semiconductor devices, Nanotechnology~\cite{bild}, or light-driven motors~\cite{motor}.
Due to its simplicity and its good agreement with experiments (see~\cite[Sec.~11.1 page 463]{beris}), the {Ericksen--Leslie model} is one of the most common models to describe nematic liquid crystals~\cite{sabine}. 
In nematic liquid crystals, the molecules tend to be aligned in a common direction, at least in equilibrium situations. This predominant direction is described by a unit vector, the so-called director, henceforth denoted by $\f d$. 
The director can be seen as the local average over the directions of a set of molecules contained in a local volume. 
The evolution of the flow-field is modeled by a Navier--Stokes-like equation and the alignment of the molecules is modeled by an evolution equation  with nonlinear principle part resulting from the \textit{variational derivative of a nonconvex free energy}. 

This model has been in the focus of mathematics and physics research for decades~\cite{sabine}. In the physics comunity, it has been observed in analytical~\cite{blow} as well as experimental studies~\cite{defectexperiment}  that the model predicts defects~\cite{alexander} and possibly \textit{exits the nematic phase} by developing a biaxial character, \textit{i.e.}, two predominant directions of the molecules~\cite{olmsted}. 
This gives a hint, why a global existence theory for the full Ericksen--Leslie model was missing in the mathematical literature until very recently. 
On the one hand, there have been many works proving global existence of weak solutions to simplified systems~\cite{linliu1,linliu3} or the full system~\cite{allgemein}, but always equipped with the one-constant approximation of the Oseen--Frank energy and approximating the norm restriction via a double-well potential. On the other hand, there are also some local well-posedness results for the full Oseen--Frank energy with a pointwise norm restriction~\cite{localwell},~\cite{localin3d}.

Recently, the author introduced the concept of measure-valued solutions to the full Ericksen--Leslie system equipped with the Oseen--Frank energy and point-wise norm restriction~\cite{masswertig}. This is the \textit{first global solution concept} to this model and moreover a generalization of classical solutions since these measure-valued solutions enjoy the weak-strong (or rather measure-valued-strong) uniqueness~\cite{weakstrong}, \textit{i.e.}, they coincide with a local strong solution emanating from the same initial data, as long as such a strong solution exists. 
The\textit{ expectation of the measure-valued solution} fulfills the so-called dissipative formulation (see~\cite{diss} for details). To get this formulation, the solution concept is not relaxed in terms of parametrized measures, but the equality is relaxed to an inequality. The dissipative solution concept agrees with the physical modeling since the director was initially modeled as the local average over a set of molecules and is now the average of the measure-valued solution.  Indeed, while a measure-valued solution captures every possible effect influencing the liquid crystal (possibly exiting the nematic phase during the evolution), a dissipative solution only captures the \textit{quantity of interest}, the locally averaged direction of the molecules. 
Therefore, we investigate in the paper at hand the numerical approximation of a dissipative solution via a semi-discrete scheme. 

This generalized solution concept of dissipative solutions relies on a \textit{relative energy inequality}, which compares the dissipative solution in a certain way (via the relative energy) with more regular functions solving the equation possibly only approximately (see Definition~\ref{def:diss}). 

The solutions to the introduced approximate scheme solve an associated \textit{approximate relative energy inequality} (see Theorem~\ref{thm:dis}). 
This results in a new concept of convergence of a numerical scheme: Instead of showing the convergence of solutions to a numerical scheme directly, we prove that the distance between the solution to the numerical scheme and a regular test function, measured in terms of the relative energy is bounded by certain norms of the test functions, the difference of the initial values, and how well the test functions solve the Ericksen--Leslie system. 
In case that the dissipative solution fulfills additional regularity requirements, which can be expected at least locally in time, the solutions to the numerical scheme converge to the more regular solution of the original system in a stronger sense .

The proposed semi-discrete scheme would also be an appropriate choice for the discretization of a simplified version of the Ericksen--Leslie system~(compare to the second approximation scheme in~\cite{prohl}). 
Due to its anisotropic properties, one can not expect to attain a weak solutions as the limit of the sequence of solutions to the approximate scheme since it dramatically lacks coercivity. 
 But it is still possible to prove the convergence to a dissipative solution. This seems to be an indicator that dissipative solution are a valid concept for anisotropic systems, which often lack coercivity properties on the whole space. 

In the second part of the paper, we propose an optimal control scheme. The aim is to use an electromagnetic field to stir the evolution of the liquid crystal to a desired end state. 
The optimal control problem consists of  a convex cost functional and as a constraint the solutions should fulfill the properties of a dissipative solution to the Ericksen--Leslie equations. 
In a first result, existence of an optimal control to this problem is shown to exist via standard variational arguments.
This problem is approximated by problems consisting of the same cost functional with an restricted set of admissible controls and the constraints are approximated as introduced in the first part of this article. 
The approximation strategy is somehow contrary to the often advocated strategy \glqq{}first regularize, then optimize\grqq{}. 
Following this approach would mean to introduce the regularization and penalization terms as in the proof of the existence of measure-valued solutions (see~\cite{masswertig}). But this technique bears several disadvantages. 
Due to the high-order regularization, a high-order finite element scheme has to be adopted. Additionally, the penalization due to the double-well potential requires a fine handling of the discretization, regularization, and penalization parameters (see also~\cite{prohl}). 
The question of convergence for vanishing regularization, penalization, and discretization limit and their interchange remains widely open. In contrast to this the proposed scheme does not suffer from this shortcomings, but the sense of convergence of this scheme for vanishing discretization parameter is a very weak one.

 We can prove that the cost functional is lower semi-continuous with respect to the sequence of minimizers to the  approximate problems. Additionally, in the case that the optimal solution enjoys additional regularity, the solutions to our scheme converge to an optimal control.
The additional regularity requirement on the minimizer is enough to deduce uniqueness of the dissipative solution, such that the asserted  convergence makes sense. 

In the context of $\Gamma $ or Moscow convergence, \textit{i.e.}, in the context of lower-semi-continuity and attainability, one can assert that the cost functional is lower-semi continuous with  respect to the convergence of the approximate optimal control scheme. But the optimal solution is only known to be attainable under additional assumptions on an optimal solution. Like in the context of solvability of partial differential equations, one can talk about a weak-strong optimal control scheme in comparison to weak-strong uniqueness.

It seems remarkable that even though the control enters the system nonlinearly, weak convergence of the control is enough to go to the limit in the formulation. 

\textbf{Plan of the paper:}
In Section~\ref{sec:not}, we introduce some notation including elements of tensor calculus required for a concise presentation of the results. 
In Section~\ref{sec:gov}, the full Ericksen--Leslie model is introduced and in Section~\ref{sec:energy} the general Oseen--Frank energy as well as the energy contribution due to electromagnetic effects is given. 
Section~\ref{sec:rel} collects the definition of a dissipative solution and the main result. 
Section~\ref{sec:semiconv} provides the proof of the main result by introducing the semi-discrete scheme and approximate relative energy inequality (see Section~\ref{sec:scheme}), derive \textit{a priori} estimates, and extract converging subsequences (see Section~\ref{sec:apri}), prove the approximate relative energy estimate, and show its convergence to the continuous one (see Section~\ref{sec:disrel}).  
In the last section~(see Section~\ref{sec:opt}), we introduce the continuous optimal control problem, prove the existence of an optimal control (see Section~\ref{sec:optcon}), introduce the approximate optimal control problem and show that the cost functional is lower semi-continuous with respect to the convergence of the solutions to the approximate scheme. Under additional regularity assumptions, the solutions to the approximate problems even converge to an optimal solution of the continuous one (see Section~\ref{sec:optdis}).


\subsection{Notation\label{sec:not}}
Vectors of $\R^3$ are denoted by bold small Latin letters. Matrices of $\R^{3\times 3}$ are denoted by bold capital Latin letters. We also use tensors of higher order, which are denoted by bold capital Greek letters.
Moreover, numbers are denoted be small Latin or Greek letters, and capital Latin letters are reserved for potentials.
The euclidean scalar product in $\R^3$ is denoted by a dot $ \f a \cdot \f b : = \f a ^T \f b = \sum_{i=1}^3 \f a_i \f b_i$,  for $ \f a, \f b \in \R^3$ and the Frobenius product in $\R^{3\times 3}$ by a double point $ \,\f A: \f B:= \tr ( \f A^T \f B)= \sum_{i,j=1}^3 \f A_{ij} \f B_{ij}$, for $\f A , \f B \in \R^{3\times 3}$.
Additionally, the scalar product in the space of tensors of order three is denoted by three dots 
\begin{align*}
\f \Upsilon \dreidots\, \f \Gamma : =\left [ \sum_{j,k,l=1} ^3 \f \Upsilon_{jkl} \f \Gamma_{jkl}\right ], \quad    \f \Upsilon \in \R^{3\times 3 \times 3 },  \, \f \Gamma \in \R^{3\times 3 \times 3}  .
\end{align*}
The associated norms are all denoted by $| \cdot |$, where also the norms of tensors of higher order are denoted in the same way 
\begin{align*}
| \f \Lambda|^2 := \sum_{i,j,k,l=1}^3 
\f \Lambda_{ijkl}^2\,,\quad\text{for }\f \Lambda \in \R^{3^4} \quad 
\text{and }\quad| \f \Theta |^2  := \sum_{i,j,k,l,m,n=1}^3 \f \Theta ^2_{ijklmn}\,,\quad\text{for } 
\f \Theta \in \R^{3^6}\,,
\end{align*}
respectively.
Similar, we define the products of tensors of different order.
The product of a tensor of fourth and third order with a matrix  is defined by
\begin{align*}
   \f \Lambda : \f A : =\left [ \sum_{k,l=1} ^3 \f \Lambda_{ijkl} \f A_{kl}\right ]_{i,j=1}^3\,, \, 
\f \Gamma : \f A := \left [ \sum_{j,k=1}^3 \f \Gamma_{ijk}\f A_{jk}\right ]_{i=1}^3\, ,  \,
 \f \Lambda \in \R^{3^4 },  \, \f \Gamma\in \R^{3^  3 } , \, \f A \in \R^{3\times 3}\,.
\end{align*}

The product of a tensor of sixth order and a matrix or a tensor of third order is defined via
\begin{align*}
 \f A : \f \Theta : ={}& \left [ \sum_{i,j=1} ^3 \f A_{ij} \f \Theta_{ijklmn}  \right ]_{k,l,m,n=1}^3 , \, \f \Theta \dreidots \f \Gamma : = \left [ \sum_{l,m,n=1} ^3 \f \Theta_{ijklmn} \f \Gamma_{lmn}\right ]_{i,j,k=1}^3 ,  \,  
  \f \Theta \in \R^{3^6},\f A \in \R^{3\times 3} ,\f \Gamma \in \R^{3^  3}\,.
\end{align*}
The product of a vector and a tensor of fourth order is defined differently. The definition is adjusted to the cases of this work:
 \begin{align*}
\f a \cdot \f \Theta :={}& \left [ \sum_{k=1} ^3 \f a_{k} \f \Theta_{ijklmn}  \right ]_{i,j,l,m,n=1}^3,
\, 
  \f \Theta \in \R^{3^6},\f a \in \R^{3} \,.
\end{align*}
The standard matrix and matrix-vector multiplication is written without an extra sign for bre\-vi\-ty,
$$\f A \f B =\left [ \sum _{j=1}^3 \f A_{ij}\f B_{jk} \right ]_{i,k=1}^3 \,, \quad  \f A \f a = \left [ \sum _{j=1}^3 \f A_{ij}\f a_j \right ]_{i=1}^3\, , \quad  \f A \in \R^{3\times 3},\,\f B \in \R^{3\times3} ,\, \f a \in \R^3 .$$
The outer vector product is given by
 $\f a \otimes \f b := \f a \f b^T = \left [ \f a_i  \f b_j\right ]_{i,j=1}^3$ for two vectors $\f a , \f b \in \R^3$ and by $ \f A \o \f a := \f A \f a ^T = \left [ \f A_{ij}  \f a_k\right ]_{i,j,k=1}^3 $ for a matrix $ \f A \in \R^{3\times 3} $ and a vector $ \f a \in \R^3$. 
The symmetric and skew-symmetric parts of a matrix are given by 
$\f A_{\sym}: = \frac{1}{2} (\f A + \f A^T)$ and 
$\f A _{\skw} : = \frac{1}{2}( \f A - \f A^T)$, respectively ($\f A \in \R^{3\times  3}$).
For the product of two matrices $\f A, \f B \in \R^{3\times 3 }$, we observe
 \begin{align*}
 \f A: \f B = \f A : \f B_{\sym}\,, \quad \text{if } \f A^T= \f A\quad \text{and}\quad
  \f A: \f B = \f A : \f B_{\skw}\,, \quad \text{if } \f A^T= -\f A\, .
 \end{align*}
Furthermore, it holds $\f A^T\f B : \f C = \f B : \f A \f C$ for
$\f A, \f B, \f C \in \R^{3\times 3}$ and
$ \f a\otimes \f b : \f A = \f a \cdot \f A \f b$ for
$\f a, \f b \in \R^3$, $\f A \in \R^{3\times 3 }$  and hence $ \f a \otimes \f a : \f A = \f a \cdot \f A \f a =  \f a \cdot \f A_{\sym} \f a$.

We use  the Nabla symbol $\nabla $  for real-valued functions $f : \R^3 \to \R$ and vector-valued functions $ \f f : \R^3 \to \R^3$ 
denoting
\begin{align*}
\nabla f := \left [ \pat{f}{\f x_i} \right ] _{i=1}^3\, ,\quad
\nabla \f f  := \left [ \pat{\f f _i}{ \f x_j} \right ] _{i,j=1}^3 
\, .
\end{align*}
 The divergence of a vector-valued $ \f f : \R^3 \to \R^3$  and a matrix-valued function $\f A : \R^3 \to \R^{3\times 3}$  is defined by
\begin{align*}
\di \f f := \sum_{i=1}^3 \pat{\f f _i}{\f x_i} = \tr ( \nabla \f f)\, , \quad  \di \f A := \left [\sum_{j=1}^3 \pat{\f A_{ij}}{\f x_j}\right] _{i=1}^3\, .
\end{align*}

 Additionally, we abbreviate $\nabla \nabla $ 
 by $\nabla^2 $.
For a given tensor of fourth order, we abbreviate the associated second order operator by 
$\Delta_{\f \Lambda}\f d : =\di \f \Lambda : \nabla \f d  $ acting on functions $\f d \in \C^2(\Omega ;\R^3)$.

Throughout this paper, let $\Omega \subset \R^3$ be a bounded domain with sufficiently regular boundary~$\partial \Omega$. 
We rely on the usual notation for spaces of continuous functions, Lebesgue and Sobolev spaces. Spaces of vector-valued functions are  emphasized by bold letters, for example
$
\f L^p(\Omega) := L^p(\Omega; \R^3)$,
$\f W^{k,p}(\Omega) := W^{k,p}(\Omega; \R^3)$.
The standard inner product in $L^2 ( \Omega; \R^3)$ is just denoted by
$ (\cdot \, , \cdot )$, in $L^2 ( \Omega ; \R^{3\times 3 })$
by $(\cdot ; \cdot )$, and in $L^2 ( \Omega ; \R^{3\times 3\times 3 })$ by   $(\cdot \dreidotkom \cdot )$.
The space of smooth solenoidal functions with compact support in $\Omega$ is denoted by $\mathcal{C}_{c,\sigma}^\infty(\Omega;\R^3)$. By $\f L^p_{\sigma}( \Omega) $, $\V(\Omega)$,  and $ \f W^{1,p}_{0,\sigma}( \Omega)$, we denote the closure of $\mathcal{C}_{c,\sigma}^\infty(\Omega;\R^3)$ with respect to the norm of $\f L^p(\Omega) $, $ \f H^1( \Omega) $, and $ \f W^{1,p}(\Omega)$, respectively.
The dual space of a Banach space $V$ is always denoted by $ V^*$ and equipped with the standard norm; the duality pairing is denoted by $\langle\cdot, \cdot \rangle$. The duality pairing between $\f L^p(\Omega)$ and $\f L^q(\Omega)$ (with $1/p+1/q=1$), however, is denoted by $(\cdot , \cdot )$, $( \cdot ; \cdot )$, or $( \cdot \dreidotkom \cdot )$.

The cross product of two vectors is denoted by $\times $. We introduce the notation $ \rot{\cdot}$, which is defined via
\begin{align*}
\rot{\cdot } : \R^d \ra \R^{d\times d}\, , \quad \rot{ \f h} := \begin{pmatrix}
0& - \f h_3 &\f h_2\\
\f h_3 & 0 & - \f h_1 \\
- \f h_2 & \f h_1 & 0
\end{pmatrix}\, .
\end{align*}
The $i$-th component of the vector $\f h\in \R^3$ is denoted by $\f h_i$. 
The mapping $\rot{\cdot}$ has some nice properties, for instance
\begin{align*}
\rot{\f a}\f b = \f a \times \f b \, ,\quad \rot{\f a} ^T \rot{\f b} = (\f a \cdot \f b) I - \f b \otimes \f a\, ,
\end{align*}
for all $\f a$, $\f b \in \R^3$, where $I$ denotes the identity matrix in $\R^{3\times 3}$ or 
\begin{align*}
 \quad \rot{\f a} : \nabla \f b = \rot{\f a} : \sk b = \f a \cdot \curl \f b \, , \quad \di \rot{ \f a} = - \curl \f a \, , \quad \frac{1}{2} \rot{\curl \f a} = \sk a \,,
\end{align*}
for all $ \f a, \f b \in \C^1(\ov \Omega)$.
Displaying the cross product by this matrix makes the operation associative.

For a given Banach space $ V$, Bochner--Lebesgue spaces are denoted  by $ L^p(0,T; V)$. Moreover,  $W^{1,p}(0,T; V)$ denotes the Banach space of abstract functions in $ L^p(0,T; V)$ whose weak time derivative exists and is again in $ L^p(0,T; V)$ (see also
Diestel and Uhl~\cite[Section~II.2]{diestel} or
Roub\'i\v{c}ek~\cite[Section~1.5]{roubicek} for more details).
By  $\AC([0,T];V)$, $\C([0,T]; V) $, and $ \C_w([0,T]; V)$, we denote the spaces of abstract functions mapping $[0,T]$ into $V$ that are absolutely continuous, continuous, and continuous with respect to the weak topology in $V$, respectively.
We often omit the time interval $(0,T)$ and the domain $\Omega$ and just write, \textit{e.g.}, $L^p(\f W^{k,p})$ for brevity.
Finally, by $c>0$, we denote a generic positive constant.

\section{Model\label{sec:model}}
This section introduces the considered Ericksen--Leslie system and the associated energy. 
\subsection{Governing equations\label{sec:gov}}
Let $ \Omega$ be a bounded domain
with sufficiently regular boundary~$\partial \Omega$.
We consider the Ericksen--Leslie model as introduced in~\cite{masswertig}. 
 The governing equations  read as
\begin{subequations}\label{eq:strong}
\begin{align}
\t {\f v}  + ( \f v \cdot \nabla ) \f v + \nabla p + \di \left (\nabla \f d^T \pat{F}{\nabla \f d}( \f d , \nabla\f d )\right )- \di  \f T^L&= \f g, \label{nav}\\
\f d \times \left (\t {\f d }+ ( \f v \cdot \nabla ) \f d -\sk{v}\f d + \lambda \sy{v} \f d + \f q\right ) & =0,\label{dir}\\
\di \f v & = 0,
\\
| \f d |&=1.
\end{align}%
\end{subequations}

The variable $\f v : \ov{\Omega}\times [0,T] \ra \R^3$ denotes the velocity  of the fluid, $\f d:\ov{\Omega}\times[0,T]\ra \R^3$ represents the director, \textit{i.e.}, the orientation of the rod-like molecules, and $p:\ov{\Omega}\times [0,T] \ra\R$ denotes the pressure.
The Helmholtz free energy potential~$F$, which is described rigorously in the next section, is assumed to depend only on the director and its gradient, $F= F( \f d, \nabla \f d)$.
The free energy functional~$\mathcal{F}$  is defined by
\begin{align*}
\mathcal{F}: \He \ra \R , \quad \mathcal{F}(\f d):= \int_{\Omega} F( \f d, \nabla \f d) \de \f x \,,
\end{align*}
and $\f q$ is its variational derivative (see Furihata and Matsuo~\cite[Section 2.1]{furihata}),
\begin{subequations}\label{abkuerzungen}
\begin{align}\label{qdefq}
\f q :=\frac{\delta \mathcal{F}}{\delta \f d}(\f d) =  \pat{F}{\f d}(\f d , \nabla\f d)-\di \pat{F}{\nabla \f d}(\f d, \nabla \f d)\, .
\end{align}
The elastic part of the stress tensor, \textit{i.e.}, $\nabla \f d^T {\partial F}/{\partial \nabla \f d}$ is named after Ericksen and the dissipative 
Leslie tensor is given by
\begin{align}
\begin{split}
\f T^L_1 ={}&  \mu_1 (\f d \cdot \sy{v}\f d )\f d \otimes \f d +\mu_4 \sy{v}
 + {(\mu_5+\mu_6)} \left (  \f d \otimes\sy{v}\f d \right )_{\sym}
\\
& +{(\mu_2+\mu_3)} \left (\f d \otimes \f e  \right )_{\sym}
 +\lambda \left ( \f d \otimes \sy{v}\f d  \right )_{\skw} + \left (\f d \otimes \f e  \right )_{\skw}\, ,
\end{split}
\label{Leslie1}
\end{align}
where
\begin{align}
\f e : = \t {\f d} + ( \f v \cdot \nabla ) \f d - \sk v\f d\, .\label{e}
\end{align}

We emphasis that Parodi's law is always assumed 
\begin{equation}
 \lambda = \mu_2 + \mu _3\,.\label{parodi}
\end{equation}
It follows from Onsager's reciprocal relation and is essential to prove the energy inequality~\eqref{prop:enrgydis}. 
Formally, equation~\eqref{dir} can be taken in the cross product with $\f d$ itself, this leads to
\begin{align*}
- ( I - \f d \o \f d ) ( \t \f d + ( \f v \cdot \nabla ) \f d - ( \sy v \f d + \lambda\sy v \f d + \f q ) =0 \,.
\end{align*}
The norm restriction $  | \f d|=1$ on the director implies $ \t | \f d |^2 = ( \f v \cdot \nabla ) | \f d |^2 = \f d \cdot \sk v \f d = 0$ such that $ {\f e} \cdot \f d =0$. Hence, we may infer $ {\f e}= -( I - \f d\o\f d) ( \lambda \sy v \f d + \f q)$. Inserting this into~\eqref{Leslie1}, yields
\begin{align}
\begin{split}
\f T^L ={}&  (\mu_1 + \lambda^2) (\f d \cdot \sy{v}\f d )\f d \otimes \f d +\mu_4 \sy{v}
 + {(\mu_5+\mu_6- \lambda^2 )} \left (  \f d \otimes\sy{v}\f d \right )_{\sym}
\\
&- \lambda ( \f d \o ( I - \f d \o \f d ) \f q) _{ \sym}- \left (\f d \otimes \f q  \right )_{\skw}\, .
\end{split}\label{Leslie}
\end{align}
We choose to work with the formulation~\eqref{Leslie}, which is equivalent to~\eqref{Leslie1}, but more suitable for our purposes. Indeed, replacing the time derivative in $\f e$ allows to write the semi-discrete scheme~(see~\eqref{eq:dis} below) as an explicit ordinary differential equation. This is essential to deduce existence of solutions to such a problem. 

To ensure the dissipative character of the system, we assume that
\begin{align}
\begin{gathered}
 \mu_4 > 0, \quad (\mu_5+\mu_6)- \lambda ^2>0,\quad  \mu_1 +  \lambda ^2>0 
\,.
\end{gathered}\label{con}
\end{align}
\end{subequations}
The function $\f g$ incorporates external forces, \textit{e.g.}, gravity. We assume that $ \f g \in L^2(0,T; (\V)^*)$. Note that we do not include the pressure into our formulation (see Definition~\ref{def:diss} below). 

Finally, we impose boundary and initial conditions as follows:
\begin{subequations}\label{anfang}
\begin{align}
\f v(\f x, 0) &= \f v_0 (\f x) \quad\text{for } \f x \in \Omega ,
&\f v (  \f x, t ) &= \f 0  &\text{for }( t,  \f x ) \in [0,T] \times \partial \Omega , \\
\f d (  \f x, 0 ) & = \f d_0 ( \f x) \quad\text{for } \f x \in \Omega ,
&\f d (  \f x ,t ) & = \f d_1 ( \f x )  &\text{for }( t,  \f x ) \in [0,T] \times \partial \Omega .
\end{align}
\end{subequations}
We always assume that $\f d_1= \f d_0$ on $\partial \Omega$, which is a compatibility condition providing regularity.

\subsection{The general Oseen--Frank energy and electromagnetic field effects\label{sec:energy}}
The \textit{Oseen--Frank} energy is given by~(see~Leslie~\cite{leslie}) 
\begin{align}
F^{OF}(\f d , \nabla \f d) := \frac{K_1}{2} (\di \f d )^2 +\frac{K_2}{2}( \f d \cdot \curl \f d )^2  + \frac{K_3}{2} |\f d \times \curl \f d|^2 \,,\label{oseen}
\end{align}
where $K_1,K_2,K_3>0$.
This energy can be reformulated using the norm one restriction, to
\begin{align}
\begin{split}
 F^{OF}( \f d , \nabla \f d)&:= \frac{k_1}{2} ( \di \f d) ^2 + \frac{ k_2}{2} | \curl \f d |^2 + \frac{k_3}{2} | \f d |^2 ( \di \f d )^2 +   \frac{k_4}{2} ( \f d\cdot \curl \f d )^2  +  \frac{k_5}{2} | \f d \times \curl \f d |^2 \, ,
\end{split} \label{frei}
\end{align} 
where $ k_1=k_3=K_1/2$, $k_2={\min\{K_2,K_3\}}/{2}$, $k_4 = K_2-k_2$, and $ k_5 =K_3-k_2$ are again non-negative constants, with $k_1,k_2>0$. 
We remark that $| \f d |^2| \curl \f d |^2 = ( \f d \cdot \curl \f d )^2 + | \f d \times \curl \f d |^2 $.

To abbreviate, we define the tensor of order 4, $\f \Lambda \in \R^{3^4}$ and a tensor of order 6, $ \f \Theta \in \R^{3^6}$ via
\begin{align}
\f \Lambda_{ijkl} &: ={} k_1 \f \delta_{ij} \f \delta_{kl} + k_2 ( \f \delta_{ik}\f \delta_{jl}-\f\delta_{il}\f\delta_{jk})\,,\label{Lambda}
\intertext{and}
\f \Theta_{ijklmn} &:={}k_3 \f \delta_{ij}\f \delta_{lm}\f \delta_{kn} 
%
%
+ k_5  \left (  \f \delta_{il}\f \delta_{mn}\f \delta_{jk} - \f \delta_{mi}\f \delta_{ln}\f \delta_{jk} - \f \delta_{lj}\f \delta_{mn}\f \delta_{ik} + \f \delta_{jm}\f \delta_{ln}\f \delta_{ik}  \right )\notag
\\
& 
+k_4 \left (  \f \delta_{kn}\f \delta_{jm}\f \delta_{il} + \f \delta_{km}\f \delta_{jl}\f \delta_{in} + \f \delta_{kl}\f \delta_{jn}\f \delta_{im} - \f \delta_{kn}\f \delta_{jl}\f \delta_{im}- \f \delta_{km}\f \delta_{jn}\f \delta_{il} - \f \delta_{kl}\f \delta_{jm}\f \delta_{in}  \right )
\,, \label{ThetaOF}
\end{align}
respectively.
The free energy can be written as 
\begin{align*}
2 F^{OF}(\f d, \nabla \f d ) = \nabla \f d : \f \Lambda : \nabla \f d +  \nabla \f d \otimes  \f d  \dreidots \f \Theta \dreidots  \nabla \f d \otimes \f d  \,. 
\end{align*}
 
The Tensor $\f \Lambda$ is strongly elliptic, \textit{i.e.}~there is an $\eta>0$ such that $ \f a \otimes  \f b : \f \Lambda : \f a \otimes \f b   \geq \eta | \f a|^2 | \f b|^2 $ for all $\f a, \f b \in \R^3$. Indeed, it holds 
\begin{align*}
 \f a \otimes  \f b : \f \Lambda : \f a \otimes \f b  = k_1 (\f a \cdot \f b)^2 + k_2 ( | \f a |^2 | \f b|^2-( \f a \cdot \f b )^2 ) \geq \min\{k_1,k_2\} | \f a |^2 | \f b|^2\,.
\end{align*}
The second order differential operator $\Lap$ introduced by a strongly elliptic tensor is coercive on $\Hb$, \textit{i.e.}, there exists a constant $k>0$ such that 
\begin{align}\label{kill}
k\| \nabla \f d \|_{\Le}^2 \leq \frac{ 1}{2} \left ( \nabla \f d ; \f \Lambda : \nabla \f d\right )  \,
\end{align}
holds for all $\f d \in \Hb$ (see~\cite[Proposition~13.1]{chipot}). 
\begin{remark}
In comparisson to the proof of existence of measure-valued solutions (see~\cite{masswertig}), it is sufficient to assume that $k_3$, $k_4$, $k_5\geq 0$. The strict inequality is not necessary for the proof of dissipative solutions. This seems to be an artificial generalization considering the reformulation of~\eqref{oseen} to~\eqref{frei}, but it relies on the fact that the director can be bounded in the $L^\infty$-norm for the proposed scheme (see~\eqref{eq:dis} and~Proposition~\ref{prop:norm}).
\end{remark}

In the sequel of this article, we want to investigate how a dissipative solution can be controlled by means of an electromagnetic field. This seems to be a good way to control the evolution of a nematic liquid crystal. At least in the case of the famous application in liquid crystal displays, the material is controlled in this way.

The model will be extended by an electromagnetic field influencing the dynamics of the liquid crystal. 
Therefore, the model is adapted by adding an electromagnetic potential to the free energy. 
The adapted free energy potential for a magnetic field $\f H$ is given by~\cite[Section~3.2]{gennes}
\begin{align}
F_{\f H} (\f d ,\nabla \f d , \f H) = F^{OF}( \f d ,\nabla \f d ) - \frac{\chi_{\|}}{2} ( \f d \cdot \f H )^2 - \frac{\chi_{\bot}}{2} | \f d \times \f H|^2 \,,\label{electroenergy}
\end{align}
where the free energy potential $F^{OF}$ is given in~\eqref{frei}. 
The associated variational derivative is given via the definition~\eqref{qdefq} by (see~\cite{lasarzik})
\begin{align*}
\f q   ={}& - k_1 \nabla \di \f d + k_2 \curl \curl \f d - k_3\nabla (\di \f d | \f d|^2) - k_4 \di \left ( \rot{\f d} ( \f d \cdot \curl \f d ) \right ) - 4 k_5 \di \left (  ( \nabla \f d)_{\skw} \f d \o \f d\right )_{\skw}\\& + k_3 (\di \f d)^2 \f d  + k_4 ( \f d \cdot \curl \f d) \curl \f  d + 4 k_5  ( \nabla \f d)_{\skw}^T ( \nabla \f d)_{\skw}\f d- \chi_{\|} ( \f d \cdot \f H) \f H + \chi_{\bot}\f H \times (\f H \times \f d)
\\={}&-  \Lap\f d - \di \left ( \f d \cdot \f \Theta \dreidots \nabla \f d \o \f d \right ) + \nabla \f d : \f \Theta \dreidots \nabla \f d \o \f d- \chi_{\|} ( \f d \cdot \f H) \f H + \chi_{\bot}\f H \times (\f H \times \f d) \,.
\numberthis\label{qdef}
\end{align*}
 Here $ \chi_{\|} $ and $ \chi_{\bot}$ denote the constants measuring the magnetic susceptibility parallel and orthogonal to the director,  both constants are negative $\chi_{\|} , \chi_{\bot} < 0$ due to the diamagnetic properties of liquid crystal (compare~\cite[Section~3.2]{gennes}). 
In usual nematic liquid crystals, it holds that $\chi_{\|}> \chi_{\bot}$, such that $|\chi_{\|}|< |\chi_{\bot}|$ which agrees with the naive perception since molecules that are not aligned should experience a bigger force.
Using the calculation rules in Section~\ref{sec:not}, we observe for $\f d $ with $|\f d|=1$ that 
\begin{align*}
- \frac{\chi_{\|}}{2} ( \f d \cdot \f H )^2 - \frac{\chi_{\bot}}{2} | \f d \times \f H|^2  = - \frac{\chi_{\|}}{2} ( \f d \cdot \f H )^2 - \frac{\chi_{\bot}}{2} ( | \f d |^2 | \f H|^2 -  (\f d \cdot \f H)^2 )  = -  \frac{\chi_{\|}-\chi_{\bot}}{2} (\f d \cdot \f H)^2 - \frac{\chi_{\bot}}{2}  | \f H|^2\,.
\end{align*}
Since $\f H$ is given and $ {\chi_{\|}-\chi_{\bot}}>0$, this energy part is minimized if $\f d$ is parallel to $\f H$. 
Clearly, the molecules are then aligned along the direction of the magnetic field $\f H$.
\begin{remark}\label{rem:electric1}
The influence of an electric field can be modeled similar (see~\cite[Section~3.3.1]{gennes}). For a static electric field $\f E$, the free energy potential is adapted via 
\begin{align*}
F_{\f H, \f E } (\f d ,\nabla \f d , \f H, \f E) = F( \f d ,\nabla \f d ) - \frac{\chi_{\|}}{2} ( \f d \cdot \f H )^2 - \frac{\chi_{\bot}}{2} | \f d \times \f H|^2 - \frac{\epsilon_{\|}}{8\pi} ( \f d \cdot \f E )^2 - \frac{\epsilon_{\bot}}{8\pi} | \f d \times \f E|^2 \,,
\end{align*}
where $\epsilon_{\|}$ and $\epsilon_{\bot}$ are the static dielectric constants measured along and perpendicular to the director, respectively. 
Mathematically, such an influence of an electric field can be handled similar  to the magnetic field and the calculations below (see also~\cite[Remark~5.3 and Remark~5.5]{diss}). 
Therefore, we only consider the influence of  a magnetic field in this article. 

The magnetic field is assumed to be static. If it is time-dependent, its evolution should be determined by Maxwell's equations. This would heavily impede the mathematical theory, so this additional difficulty is left for future work. Nevertheless, the magnetic field should fulfill the standard source-free assumption $\di \f H =0$. 
 \end{remark}

\section{Dissipative solvability concept and main result \label{sec:rel}}
This section is devoted to the introduction of dissipative solutions and the assertion of the main result. 
\subsection{Relative energy and dissipative solutions\label{sec:def}}
The concept of dissipative solutions heavily relies on the formulation of an appropriate relative energy for the Oseen--Frank energy. This relative energy serves as a natural comparing tool for two different solutions $(\f v , \f d)$ and $(\vv,\dd)$. 
The relative energy is defined by 
\begin{align}
\begin{split}
\mathcal{E}(\f v ,\f d , \f H | \vv , \dd, \HH  )  :
={}& \frac{1}{2} \left \| \f v - \vv \right \| _{\Le}^2 +  \frac{1}{2}\left ( \nabla \f d - \nabla \dd ; \f \Lambda : (\nabla \f  d- \nabla \dd) \right ) \\ &  +\frac{1}{2} \Big ( \nabla\f d \o \f d - \nabla \dd \o \dd  \dreidotkom \f \Theta  \dreidots (\nabla \f d \o \f d 
 - \nabla \dd  \o \dd )\Big ) \\&- \frac{\chi_{\|}}{2} \| \f d \cdot \f H - \dd \cdot \HH \|_{\Le}^2 - \frac{\chi_{\bot}}{2} \| \f d \times \f H  - \dd  \times \HH  \|_{\Le}^2\,
  \,
  \end{split}
  \label{relEnergy}
\end{align}
and the relative dissipation by
\begin{align}
\begin{split}
\mathcal{W}(\f v ,\f d | \vv , \dd ) : ={}& (\mu_1+\lambda^2) \| \f d  \cdot ( \nabla \f v  )_{\sym} \f d  - \dd  \cdot ( \nabla \vv )_{\sym } \dd  \|_{\Le} ^2+ \mu_4 \| ( \nabla \f v )_{\sym} - ( \nabla \vv )_{\sym} \|_{\Le}^2\\&+ ( \mu_5 + \mu_6 -\lambda^2 )) \| ( \nabla \f v  )_{\sym} \f d  - ( \nabla \vv )_{\sym} \dd \|_{\Le}^2  +  \| \f d  \times \f q  - \dd  \times \tq  \|_{\Le}^2 \, . 
\end{split}
\label{relW}
\end{align}
Note that the relative dissipation intrinsically depends on $\f H$ and $\HH$ too. The definition of the variational derivatives $\f q$ and $\tq$ (see~\eqref{qdef}) inherits the dependence on $\f H$ and $\HH$, respectively. The variational derivative $\tq$ is defined by~\eqref{qdef}, where $\f d$ and $\f H$ are replaced by $\dd $ and $\HH$, respectively. 
Additionally, we remark that $\chi_{\|},\chi_{\bot} <0$ such that all terms in~\eqref{relEnergy} are positive. The same holds true for the terms in~\eqref{relW}. 

Inserting the definitions of the tensors $\f \Lambda $ and $\f \Theta $,
the relative energy can be expressed as
\begin{align*}
\mathcal{E}(\f v ,\f d , \f H | \vv , \dd ,\HH ) ={}&  + 
 \frac{k_1}{2}\|\di \f d   - \di  \dd \|_{L^2}^2   +  {k_2} \| (\nabla \f d )_{\skw} - ( \nabla\dd )_{\skw} \|_{\Le}^2 \notag +\frac{k_3}{2}  \| (\di \f d  ) \f d  - (\di \dd ) \dd  \|_{L^2}^2\\& + \frac{k_4}{2}\| \f d   \cdot \curl\f d   - \dd   \cdot \curl\dd   \|_{L^2}^2    \notag
+  2 k_5  \| (\nabla \f d  )_{\skw} \f d  - ( \nabla\dd )_{\skw} \dd  \|_{\Le}^2 + \frac{1}{2} \left \| \f v  - \vv  \right \| _{\Le}^2 \\
  &- \frac{\chi_{\|}}{2} \| \f d \cdot \f H - \dd \cdot \HH \|_{\Le}^2 - \frac{\chi_{\bot}}{2} \| \f d \times \f H  - \dd  \times \HH  \|_{\Le}^2\,.
\end{align*}
We always assume that $(\f v , \f d)$ and $(\vv,\dd)$ fulfill the regularity requirements~\eqref{reldiss} below such that~\eqref{relEnergy} and~\eqref{relW} are well defined. 
In the following $(\vv, \dd)$ are even assumed to fulfill~\eqref{regtest} below.

\begin{definition}[Dissipative solution]\label{def:diss}
Let $\f H \in \f L^3 $. The triple $(\f v , \f d , \f q)$ consisting of the velocity field $\f v $, the director field $\f d$ and the variational derivative $\f q$ is  said to be a dissipative  solution to~\eqref{eq:strong} with magnetic field $\f H$ if
\begin{subequations}\label{reldiss}
\begin{align}
\f v &\in \C_w(0,T;\Ha)\cap  L^2(0,T;\V)\cap W^{1,2}(0,T; ( \Hc \cap\,\V)^*)\,,
\\ \f d& \in \C_w(0,T;\He)\cap W^{1,2}(0,T; \f L^{3/2}) \text{ with } | \f d (\f x ,t)| =1 \text{ a.\,e.~in $\Omega\times (0,T)$}\,,\\
\f d \times \f q & \in L^2(0,T; \Le)\, 
\end{align}
\end{subequations}
and if 
\begin{align}
\begin{split}
\frac{1}{2}\mathcal{E}&  (\f v (t) , \f d(t) , \f H | \vv (t) , \dd(t) , \HH ) +\| \f H - \HH\|_{ \f L^2}^2+  {}\frac{1}{2}\int_0^t\mathcal{W} (\f v (s) , \f d(s) | \vv (s) , \dd(s))  \exp\left ({\int_s^t\mathcal{K}(\tau)\de \tau }\right )\de s\\ \leq{}&  
\mathcal{D}_0(\f v(0) , \f d(0), \f H| \vv(0), \dd(0), \HH ) \exp\left ({\int_0^t\mathcal{K}(s)\de s } \right )
%
\\&+ \int_0^t \left (\mathcal{A}(\vv(s), \dd(s)), \begin{pmatrix}
 \vv(s)- \f v(s)  \\ \f d(s) \times ( \tq(s) - \f q(s) + \f a(\f d(s) , \f H | \dd(s), \HH) )
  \end{pmatrix}\right ) \exp\left ({\int_s^t\mathcal{K}(\tau)\de \tau }\right )\de s  
\, 
\end{split}\label{relenin}
\end{align} 
for all  test functions $(\vv , \dd, \HH )$ with  
\begin{align}
\begin{split}
\vv &\in L^\infty(0,T; \Ha)\cap L^2(0,T; \f L^\infty)\cap  L^2 (0,T; \f W^{1,3}_{0,\sigma})\cap 
W^{1,2}(0,T;\Vd)\, , 
 \\
\dd &\in    L^\infty(0,T;\f W^{1,3
}\cap \f L^\infty ) \cap  L^2( 0,T ; \f W^{2,3}) \cap  L^4( 0,T ; \f W^{1,6})\cap W^{1,1 }(0,T;  \f W^{1,3}\cap \f L^\infty ) \cap W^{1,2}( 0,T; \f L^3)
\end{split}\label{regtest}
\end{align}
and $| \dd| =1$   a.e.~in $\Omega\times (0,T)$, $\tr(\dd)= \f d_1$, $ \HH \in \f L^\infty$, $\tq$ given by~\eqref{qdef} with $\f d $ and $ \f H$  replaced by $\dd$ and $ \HH$, respectively,
as well as  
\begin{align}
&\intte{\left ( \f d (t) \times\left ( \t \f d(t)+( \f v(t)\cdot \nabla ) \f d(t) -  \sk{v(t)}\f d(t) + \lambda \sy{v(t)}\f d(t) + \f q(t)\right ), \f \zeta(t)\right )}=0
\label{eq:mdir}
\end{align}
for $\f \zeta \in L^2(0,T; \f L^3)$ with $| \f d( \f x ,t) |= 1$ a.e.~in $ \Omega \times (0,T)$, and an estimate for the time derivative of $\f v$, \textit{i.e.},
\begin{align}
\| \t \f v\|_{L^2(( \Hc \cap\,\V)^*)} \leq c \left (\| \f g \|_{L^2(\Vd)}, \| \f v _0 \|_{ \Ha} , \| \f d_0 \|_{ \He}, \| \f d _0 \|_{ \f L^\infty}, \| \f d _1 \|_{\Hrand{3}}  \right ) \,,\label{timev}
\end{align} 
where $c$ is a constant depending on the right-hand side $\f g$ and the initial and boundary values.
%

The potential $\mathcal{K}$ is given by 
\begin{align}\label{K}
\begin{split}
\mathcal{K}(s) ={}& C  \left (  \|\vv(s)\|_{\f L^\infty}^2+ \| \nabla \vv(s)\|_{\f L^{3}}^2 
+   \| \tq\|_{\f L^{3}}^2 
   + \| \t \dd(s)\|_{\f L^\infty} + \| \t \dd(s)\|_{\f W^{1,3}} + \| \t \dd (s) \|_{ \f L^3}^2 
 \right )\,,
\end{split}
\end{align}
where $C$ is a possible large constant depending on the norms $\| \f d \|_{L^\infty(\f L^\infty)} $, $\| \dd \|_{L^\infty(\f L^{\infty})}$, which are just $1$ due to~\eqref{reldiss} and~\eqref{regtest}. 
Additionally, the constant $C$ depends on $\| \nabla \dd \|_{L^\infty(\f L^3)}$, $\| \f H \|_{\f L^3}$, and $\| \HH\|_{\f L^\infty}$. 
It is obvious that $\mathcal{K}$ is bounded in $L^1(0,T)$ due to the regularity assumptions~\eqref{regtest}.  The potential $\mathcal{K}$ can be seen as a measure for the regularity of the test functions $(\vv,\dd)$.
The potential $\mathcal{D}_0$ measures the distance in the initial point in an appropriate way, it is given by
\begin{align}
\begin{split}
\mathcal{D}_0(\f v , \f d, \f H | \vv, \dd, \HH  ) =  \mathcal{E} (\f v  , \f d ,\f H  | \vv  , \dd, \HH ) &+\frac{1}{2k} \left \|   (\f d- \dd ) \cdot  \f \Theta \dreidots \nabla \dd \o \dd   \right \|_{\Le}^2 
 \\&+\Big ((\nabla \f d-\nabla \dd )\o (\f d-  \dd) \dreidotkom \f \Theta \dreidots \nabla \dd \o \dd \Big) 
\end{split}\label{D0}
\end{align}

The operator $\mathcal{A}$ incorporates the classical formulation~\eqref{eq:strong} evaluated at the test functions $(\vv ,\dd)$ and, thus measures how well the test function approximates a strong solution to~\eqref{eq:strong}.
It is  given by
\begin{align}
\mathcal{A}(\vv, \dd ) ={}& \begin{pmatrix}
 \t {\vv}  + ( \vv \cdot \nabla ) \vv + \di (\nabla \f d ^T  \pat{F}{\nabla \f d} ( \dd  ,\nabla \dd) 
 )- \di  \tilde{\f T}^L- \f g\\
\dd \times \left (\t \dd + ( \vv \cdot \nabla ) \dd -(\nabla \dd)_{\skw} \dd + \lambda (\nabla \dd)_{\sym} \dd + \tq \right ) 
\end{pmatrix} \label{A}
\intertext{and $\f a ( \f d, \f H | \dd,  \HH ) $ is given by}
\f a ( \f d ,\f H | \dd,  \HH ) :={}& \frac{1}{k } \left ( ( \dd  - \f d ) \cdot \f\Theta \dreidots \nabla \dd \o \dd \right ) : \f \Theta \dreidots \nabla \dd \o \dd+\chi_{\|} ( \HH -  \f H )(\dd \cdot \HH) - \chi_{\bot}( \HH -  \f H )  \times ( \HH \times \dd ) \,.
 \label{a}
\end{align}
\end{definition}
\begin{remark}
\label{rem:K}
Note that the definition on $\mathcal{K}$ differs from the one in~\cite{diss}. More precisely, the norms $ \| \nabla \vv \|_{L^1( \f L^\infty)}$ and $ \| \nabla \dd \|_{L^1( \f L^\infty)}$ are missing. Consequently, we need to assume less regularity for the test functions~(compare~\eqref{regtest}) and the solution concept becomes stronger in the sense that less regularity is needed to get uniqueness. 
\end{remark}
\subsection{Main result\label{sec:main}}
\begin{theorem}\label{thm:main}
Let $\Omega \subset  \R^3$ be a bounded domain with sufficiently regular boundary~$\partial \Omega$
and let the assumptions~\eqref{parodi},~\eqref{con},~\eqref{frei}, and~\eqref{electroenergy} be fulfilled. For every $\f v_0 \in \Ha$, $ \f d_0 \in \He$ with $| \f d_0| =1 $ a.e.~in $\Omega\times (0,T)$ and $\tr (\f d_0) = \f d_1 $ with $ \f d_1 \in \f H^{s-1/2}(\partial \Omega)$ with $s\in[5/2,3]$ as well as $\f g \in L^2 (0,T;\Vd)$ and $ \f H \in \f L^3 $, there exists a
dissipative solution in the sense of Definition~\ref{def:diss}.

\end{theorem}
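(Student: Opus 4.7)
The strategy is the semi-discretization program outlined in Sections~\ref{sec:scheme}--\ref{sec:disrel}: construct a time-discrete scheme whose solutions automatically satisfy the pointwise constraint $|\f d|=1$, derive an \emph{approximate} version of the relative-energy inequality~\eqref{relenin} at the discrete level, and pass to the limit in the discretization parameter. First I would introduce a uniform partition of $[0,T]$ with step size $h>0$ and, on each interval, an implicit--explicit scheme of the form
\begin{align*}
\frac{\f v^{n+1}-\f v^n}{h}+(\f v^n\cdot\nabla)\f v^{n+1}+\nabla p^{n+1}+\di\bigl(\nabla\f d^{n\,T}\tfrac{\partial F}{\partial\nabla\f d}(\f d^n,\nabla\f d^n)\bigr)-\di\f T^L(\f v^{n+1},\f d^n,\f q^n)=\f g^n,
\end{align*}
together with an equation for $\f d^{n+1}$ written in the ``$\f e$-eliminated'' form~\eqref{Leslie} so that the director update becomes a pointwise ODE that preserves $|\f d^{n+1}|=1$; this is precisely the role emphasized in Proposition~\ref{prop:norm} of the paper. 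Existence for the discrete step then reduces to a finite-dimensional fixed-point argument (Brouwer/Galerkin) combined with a discrete energy identity obtained by testing the momentum equation by $\f v^{n+1}$ and the director equation by $\f q^n+\chi_\|(\f d\cdot\f H)\f H+\ldots$

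Second, I would collect \emph{a priori} estimates from the discrete energy inequality: uniform bounds for $\f v$ in $L^\infty(\Ha)\cap L^2(\V)$, for $\nabla\f d$ in $L^\infty(\Le)$ via the coercivity~\eqref{kill}, for $\f d\times\f q$ in $L^2(\Le)$ coming from the dissipation term in~\eqref{relW}, and the bound~\eqref{timev} on $\partial_t\f v$ obtained by duality. The pointwise bound $|\f d|=1$ gives $\f d$ uniformly in $L^\infty(\f L^\infty)$. From these, Aubin--Lions style compactness yields, for suitable piecewise-constant/piecewise-affine interpolants $\f v_h,\f d_h$, strong convergence of $\f v_h$ in $L^2(\Le)$ and of $\f d_h$ in $L^2(\He)$ along a subsequence, together with weak convergences in the reflexive spaces of~\eqref{reldiss}.

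Third, and this is the technical core, I would derive the \emph{approximate} relative-energy inequality (the content of Theorem~\ref{thm:dis}): test the discrete momentum and director equations against the discretized test pair $(\vv,\dd)$ appearing in~\eqref{regtest}, combine with the discrete energy identity, and reorganize using the algebraic identities for the Oseen--Frank tensors $\f\Lambda,\f\Theta$ together with Parodi's relation~\eqref{parodi} so that the left-hand side dominates $\mathcal{E}+\tfrac12\mathcal{W}$ and the right-hand side splits into a discrete Gronwall part weighted by a potential $\mathcal{K}_h$ and a residual encoding $\mathcal{A}(\vv,\dd)$. The crucial structural identities here are $\f a\otimes\f a:\f A=\f a\cdot\f A_{\sym}\f a$ together with the elimination yielding~\eqref{Leslie}; they are exactly what forces the various Leslie parameters to regroup into the coercive combinations~\eqref{con}.

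Finally I would pass $h\downarrow 0$ in the approximate inequality. The weak lower semicontinuity of $\mathcal{E}$, $\mathcal{W}$, and of the exponential-in-$\mathcal{K}$ weight (which is convex in the relevant variables, so Fatou applies to the integrated form), plus strong convergence where needed to treat the terms in $\mathcal{A}(\vv,\dd)$ and $\f a$, yield the continuous inequality~\eqref{relenin}. The main obstacle I expect is precisely the passage to the limit on the dissipation side: the term $\|\f d\times\f q-\dd\times\tq\|_{\Le}^2$ mixes the weak limit $\f d\times\f q$ (only $L^2$) with the strong limit of $\f d$, and the exponential weighting $\exp(\int_s^t\mathcal{K})$ has to be handled so that Fatou's lemma can be invoked uniformly; also verifying~\eqref{eq:mdir} in the limit requires identifying the cross-product structure despite only weak convergence of $\f q_h$. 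Once this is achieved, the limit triple $(\f v,\f d,\f q)$ has all the regularity~\eqref{reldiss} and satisfies~\eqref{relenin}, hence is a dissipative solution in the sense of Definition~\ref{def:diss}.
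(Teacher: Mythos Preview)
Your plan diverges from the paper in a fundamental way that also creates a genuine gap. The paper's ``semi-discrete scheme'' is a \emph{spatial} Galerkin approximation (finite-dimensional subspaces $W_n\subset\Hc\cap\V$ and $Z_n\subset\Hb\cap\f L^\infty$, continuous in time), not a time-stepping scheme. The norm restriction in Proposition~\ref{prop:norm} follows precisely because the director equation~\eqref{ddis} is a continuous-in-time ODE: testing with $\fn d$ gives $\t|\fn d|^2=0$ pointwise, since $(|\fn d|^2 I-\fn d\otimes\fn d)\fn d=0$. Your implicit--explicit time step does not give a ``pointwise ODE''; it gives an infinite-dimensional stationary problem at each step, for which neither the Brouwer argument nor $|\f d^{n+1}|=1$ is automatic. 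You would need a separate tangent-plane or projection mechanism, and the discrete energy law would acquire extra numerical dissipation terms you have not accounted for.

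More seriously, you assert ``strong convergence of $\f d_h$ in $L^2(\He)$'' from Aubin--Lions. This is false and, if it were true, would obviate the entire dissipative-solution framework. The available estimates bound $\nabla\fn d$ only in $L^\infty(\Le)$ and $\t\fn d$ in $L^2(\f L^{3/2})$; Aubin--Lions yields strong convergence of $\fn d$ in $L^q(\f L^r)$ for $r<6$, but \emph{only weak} convergence of $\nabla\fn d$. The nonlinearities in the Ericksen stress and in $\fn q$ are quadratic in $\nabla\fn d$, so one cannot pass to the limit in the equations; this is exactly why the paper works with the relative-energy \emph{inequality}~\eqref{relenin} (lower semicontinuous in the weak topology) rather than a weak formulation of~\eqref{nav}. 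Your limit passage in~\eqref{eq:mdir} and the $\mathcal{W}$-term must therefore rely on weak lower semicontinuity and the product structure $\fn d\times\fn q$ (cf.\ Proposition~\ref{prop:wkonv}, item~\eqref{wr:E}), not on strong $\He$-compactness.
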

\begin{remark}\label{rem:}
In contrast to the proof in~\cite{diss}, the existence proof in this article is not relying on the existence of measure valued solutions and therewith a regularization and penalization technique. The result is proven via the convergence of a semi-discrete scheme and thus appropriate for a decent numerical approximation. 
The boundary condition $\f d_1$ is chosen regular enough such that $\Sr \f d_1\in \f H^s (\Omega) $ with $s\in (5/2,3]$, which  grants that $ \f H^s(\Omega) \hookrightarrow \f W^{2,3}(\Omega) $. This implies that $ \dd = \Sr \f d_1 $ is a possible test function fulfilling~\eqref{regtest} and the regularity assumptions on the test functions actually make sense. 
\end{remark}

\begin{remark}[Young measure interpretation]
The variational derivative $\f q$ can  be identified in a measure-valued sense (see~\cite{masswertig} and~\cite{diss}). There exists a generalized Gradient Young measure 
\begin{align*}
\{\nu^o _{(\f x,t)}\}&  \subset \mathcal{P} ( \R^{3\times 3})\,, \text{ a.\,e.~in $\Omega\times (0,T)$} \, ,\\
 \{m_t\} &\subset \mathcal{M}^+(\ov\Omega)\,,\text{ a.\,e.~in $ (0,T)$} \, ,  \\
 \{\nu^\infty _{(\f x,t)}\} &\subset \mathcal{P}(\ov B_3\times \Se^{3^2-1})\,,\text{ $m_t $-a.\,e.~in }\ov\Omega \text{ and a.\,e.~in }   (0,T)\, ,
\end{align*}
and a classical  Young measure $ \{ \mu _{(\f x ,t)} \} \subset \mathcal{P}(\R^3)\,, \text{ a.\,e.~in $\Omega\times (0,T)$} \, ,$
such that 
\begin{multline*}
\intte{ \ll{\nu_t, \left (\f \Upsilon :\left (\f  S    (\f \Lambda : \f S + \f h \cdot \f \Theta \dreidots \f S \o \f h )^T\right )\right ) \cdot\f \psi(t)   }}+\intte{ \ll{\nu_t, \left (\f h \times \left (\f h \cdot \f\Theta \dreidots \f S \o \f h \right )\right ) \cdot\f \psi(t)   }}
\\
+ \intte{(\rot{\f d(t)}  \left (\f \Lambda : \nabla \f d(t)  + \f d(t) \cdot \f \Theta \dreidots \nabla \f d(t)  \o \f d(t)\right ) ; \nabla \f \psi(t) )}
\\
+ \inttet{\left (\langle \mu_{t },  \f d ( t) \times \left (   \chi_{\|} \f H ( \f d( t ) \cdot \f H ) - \chi_{\bot} \f H \times ( \f H \times \f d(t) )   \right )  \rangle \right ) }
 ={}\intte{\left ( \f d (t) \times \f q(t) , \f \psi(t)\right )}  \,
\end{multline*}
for all $\f \psi \in \C_c^\infty(\Omega \times (0,T) ) $. Note that the measure $m_t$ is mutually singular in $\ov \Omega$, \textit{i.e.}, it is supported on a set of Lebesgue measure zero.   The tensor $\f \Upsilon\in \R^{3\times 3\times 3}$ is the Levi--Civita tensor defined in~\cite[Section~1.1]{diss}.
The dual pairings are defined as
\begin{align*}
\langle\mu_{(\f x ,t)},f(\f x,t)\rangle :={}&  \int_{\R^3}  f(\f x,t,\f H) \mu_{(\f x ,t)} ( \de \f H) \,\intertext{ 
 and}
\ll{\nu_t, f(\f h , \f S) } :={}& 
 \int_{\Omega} \int_{\R^{3\times 3} } f(\f x, t,\f d(\f x, t), \f S)  \nu^o_{(\f x, t)} ( \de \f S)\de \f x 
 + \int_{\ov\Omega}\int_{\Se^{3^2-1} \times \ov B_3} \tilde{f} (\f x, t, \tilde{\f h} , \tilde{\f S}) \nu_{(\f x, t)}^\infty ( \de \tilde{\f S}, \de \tilde{\f h}) m_t (\de \f x)\,.
\end{align*}
The transformed function $\tilde{f}:\ov \Omega \times [0,T]   \times  B_3 \times B_{3 \times 3}\ra \R$, the so-called recession function is given by
\begin{align*}
\tilde{f} ( {\f x },t, \tilde{\f h} ,\tilde{\f  S} ) : = 
f ( \f x ,t, \frac{\tilde{\f h}}{\sqrt{1-|\tilde{\f h}|^2}}, \frac{\tilde{\f S}}{\sqrt{1-|\tilde{\f S}|^2}})  ( 1-|\tilde{ \f h}|^2  )( 1- | \tilde{\f S}|^2) \,.
\end{align*}
See~\cite{masswertig} for further details on generalized Gradient Young measures. 
For the existence theory, the Young measure $\mu$ is just a point measure at the considered magnetic field, \textit{i.e.}, $\mu_{(\f x ,t) } = \delta_{\f H(\f x ,t)}$. But in the case of the optimal control problem in Section~\ref{sec:opt}, we also need to relax the control in the definition of a solution.
Instead of introducing the measure-valued formulation in the definition~\ref{def:diss},  the function $\f q \in L^2 (0,T; \Le)$ itself is inserted in the definition.  
\end{remark}

\begin{remark}[Subdifferential interpretation]
The variational derivative $\f q$ can also be interpreted as an element of a suitable subdifferential of the free energy~\eqref{electroenergy}. 
The sense of this subdifferential has to be rather weak, to include the vector $\f q$. It should take into account the weak convergence result for $\{\fn q\}$ as well as the geometric properties, \textit{i.e.}, as a subdifferential of an energy on the manifold $\mathbb{S}^2$, $\f q$ should be an element of its cotangent. Following the proof of the convergence of $\{\fn q\}$, this subdifferential should be defined similar to the Bouligand subdifferential (see~\cite{scholtes}), but more general (see~\cite[Remark~3.3]{diss}). 

\end{remark}

\section{Convergence of a semi-discrete scheme\label{sec:semiconv}}
In the following we introduce a semi-discrete scheme and show its convergence to a dissipative solution.
\subsection{Semi-discrete scheme and approximate relative energy inequality\label{sec:scheme}}
We consider two general Galerkin-schemes, one for the discretization of the Navier--Stokes-like equation and one for the director equation. \\
Let $\{ W_n \} _{n\in\N} $ be such that $W_n \subset W_{n+1} $ and $W_n \subset  \Hc \cap\,\V $ for all $n \in \N$ and $\ov{\lim_{n\ra \infty}W_n} =   \Hc \cap\,\V $. Let $P_n$ be the $\Ha$-projection onto $W_n$. We additionally assume that the projection $P_n$ is stable in the $  \Hc \cap\,\V$-sense, \textit{i.e.}, there exists  a $c >0$ such that (see~\cite[Appendix Theorem~4.11, Lemma~4.26]{malek})
\begin{subequations}\label{AssumptionA}
\begin{equation}
\| P_n  \f w\| _{ \Hc\cap\,\V} \leq c \| \f w\|_{ \Hc\cap\, \V}
\text{ for all }\f w \in  \Hc \cap\,\V \ \text{ and }n \in \N\,.\label{Asump1}
\end{equation}
Let $\{ Z_n\}_{n\in\N} $ be such that $Z_n\subset \Hb\cap \,\f L^\infty $ and $Z_n\subset Z_{n+1} $ for all $n \in \N$ and $ \ov{\lim_{n\ra \infty}Z_n} = \Hb\cap \,\f L^\infty $. Let $R_n$ be the $\Le$-projection onto $Z_n$. We additionally assume that the projection $R_n$ is $\Hb$ and $\f L^\infty$-stable, \textit{i.e.}, there exists  a $c >0$ such that 
\begin{equation}
\| R_n  \f z\| _{\Hb} \leq c \| \f z\|_{\Hb}\text{ as well as } \| R_n  \f z\| _{\f L^\infty} \leq c \| \f z\|_{\f L^\infty}\text{  for all }\f z  \in \Hb \cap\, \f L^\infty\text{ and }n \in \N\,.\label{Asump2}
\end{equation} 
\end{subequations}
\begin{remark}
 As a sequence of linear spaces fulfilling the assumption on $\{W_n\}$, the spaces spanned by eigenfunctions of the Stokes operator can be chosen (see~\cite{masswertig}).  It is also possible to replace the $\Hc\cap\,\V$-regularity assumption on the spaces $W_n$ by $\V\cap\,\f L^\infty$. This is rather fulfilled by linear finite elements, but the sense of the divergence-free condition has to be redefined or added as a constraint to the system in such a case. 

For a domain $\Omega $ with sufficiently regular boundary~$\partial \Omega$, a standard Finite Element scheme, \textit{i.e.}, linear finite elements on a quasi-uniform triangulation, fulfills the above assumptions on the sequence of spaces 
$\{Z_n\}$ (see Thom\'{e}e~\cite[Lemma~5.1]{thomee}, Ciarlet~\cite[Section~3.3]{ciarlet}, or~Nitsche~\cite{nitsche}). 

\end{remark}

\begin{proposition}[Extension operator]
\label{prop:fort}
There exists a linear continuous operator $\Sr: \f H^{s-1/2}(\partial \Omega)  \ra \f H^{s}(\Omega)$ with $s\in [5/2,3]$, where $\Omega$ is of class $\C^{2,1}$.
This operator is the right-inverse of the trace operator, \textit{i.e.}~for all $\f g\in \f H^{{1}/{2}}(\partial \Omega)  $, it holds  $ \Sr\f g = \f g $  on $\partial\Omega$ in the sense of the trace operator.
There exists a constant $c>0$ such that
\begin{align*}
\| \Sr\f g \|_{\f H^s(\Omega)} \leq{}& c \| \f g \|_{\Hrand{s-1}}\,  \quad \text{for }\f g \in \Hrand{s-1} \text{ for }s\in[5/2,3] \,.
\end{align*}
The Sobolev exponent is chosen in such a way that the regularity of the associated test functions~\eqref{regtest} is achieved (see Remark~\ref{rem:}).
\end{proposition}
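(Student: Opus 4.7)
The plan is to establish this via the classical trace/extension theory for Sobolev spaces on domains of finite regularity, adapted to the specific range $s\in[5/2,3]$ required for the later application. Since the statement is a standard result in the regularity theory of Sobolev spaces, I would not reprove it from scratch but rather reduce it to well-known building blocks.

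First, I would localize the problem using a finite atlas and partition of unity subordinate to it. Because $\partial\Omega$ is of class $\C^{2,1}$, each boundary chart yields a $\C^{2,1}$ diffeomorphism flattening a piece of $\partial\Omega$ onto an open set of $\R^{2}=\{x_3=0\}$. The $\C^{2,1}$ regularity is the essential hypothesis here: composition with such a diffeomorphism preserves membership in $\f H^s$ exactly for $s\le 3$, since derivatives up to order $3$ involve at most second derivatives of the chart map, which are Lipschitz. Thus the problem reduces to constructing an extension operator $\f H^{s-1/2}(\R^2)\to \f H^s(\R^3_+)$ for $s\in[5/2,3]$.

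On the half-space I would use the standard explicit construction, e.g.\ a Fourier-side formula of the form $(\Sr \f g)(x',x_3) = \mathcal F^{-1}_{\xi'\to x'}\bigl[\varphi(x_3\langle\xi'\rangle)\,\widehat{\f g}(\xi')\bigr]$ with a Schwartz cutoff $\varphi$ satisfying $\varphi(0)=1$, or equivalently a Poisson-type extension. Bounds of the form $\|\Sr \f g\|_{\f H^s(\R^3_+)}\le c\|\f g\|_{\f H^{s-1/2}(\R^2)}$ are obtained by a direct calculation using Plancherel and the identity $\int_0^\infty|\varphi^{(k)}(t)|^2 t^{2(s-k)-1}\,\de t<\infty$. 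The same operator works simultaneously for all $s$ in the relevant range, so no interpolation between different constructions is needed; alternatively one can quote Lions--Magenes~\cite[Ch.~1]{} (or Grisvard) for the surjectivity of the trace and the existence of a continuous right-inverse on the relevant scale. Gluing back via the partition of unity and summing over charts yields the global operator $\Sr$, whose linearity and continuity are inherited from the local pieces.

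The right-inverse property $\tr\Sr\f g=\f g$ is immediate from the construction, since on the half-space $\Sr\f g|_{x_3=0}=\mathcal F^{-1}[\varphi(0)\widehat{\f g}]=\f g$, and this property survives the transfer back to $\Omega$. The main (and only real) obstacle is keeping track of the regularity budget at the upper endpoint $s=3$: one must check that the composition with the $\C^{2,1}$ chart maps does not consume regularity beyond what is available, which is precisely why the hypothesis is stated as $\C^{2,1}$ rather than merely $\C^2$. Once this accounting is done, the asserted estimate $\|\Sr\f g\|_{\f H^s(\Omega)}\le c\|\f g\|_{\Hrand{s-1}}$ follows directly for every $s\in[5/2,3]$ with a uniform constant depending on the atlas and the norms of the chart maps.
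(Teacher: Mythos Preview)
Your argument is correct but takes a genuinely different route from the paper. The paper defines $\Sr$ as the solution operator of the elliptic Dirichlet problem
\[
-\Lap \f d = 0 \quad\text{in }\Omega,\qquad \f d = \f g \quad\text{on }\partial\Omega,
\]
where $\Lap=\di(\f\Lambda:\nabla\,\cdot\,)$ is the strongly elliptic operator built from the Oseen--Frank tensor~$\f\Lambda$, and then invokes McLean's existence and regularity results (Theorems~4.10 and~4.21 in~\cite{mclean}) to obtain the mapping property $\f H^{s-1/2}(\partial\Omega)\to\f H^s(\Omega)$ directly. Your approach via localization, boundary flattening, and an explicit Fourier-side extension on the half-space is the classical direct construction and is equally valid. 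What the paper's choice buys is brevity (a one-line citation) and an extension adapted to the problem's energy: $\Sr\f d_1$ is $\f\Lambda$-harmonic, so when it is later inserted as the test function $\dd$ the leading term of $\tq$ vanishes. What your construction buys is self-containment---no appeal to elliptic regularity theory---and an explicit explanation of why $\C^{2,1}$ is exactly the right hypothesis for the endpoint $s=3$, a point the paper leaves implicit in the cited reference.
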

\begin{proof}
Let $\Omega$ be of class $\C^{2,1}$. The extension operator is defined via the solution operator of the problem
\begin{align*}
-\Lap \f d= 0 \quad \text{in } \Omega\,,\qquad
\f d = \f g \quad \text{on } \partial \Omega\,.
\end{align*}
This problem is uniquely solvable for a tensor enjoying the strong ellipticity (see~{McLean}~\cite[Theorem 4.10]{mclean} and~\eqref{kill}). 
The associated solution operator is linear and continuous and the regularity of this problem asserts 
 (compare~{McLean}~\cite[Theorem 4.21]{mclean})
\begin{align*}
\Sr : \f H^{s-1/2}(\partial \Omega)  \ra \f H^{s}(\Omega)   \text{ for }s\in[5/2,3] 
 \,.
\end{align*}

\end{proof}
The approximate system is similar to the one in~\cite{unsere}.
Let $n\in \N$ be fixed. As usual, we consider the ansatz
\begin{align*}
\fn v ( t)  = \sum_{i=1}^n v_n^i(t)\f w_i, \quad \fn d(t) = \Sr \f d_1 + \sum_{i=1}^nd_n^i(t)\f z_i \,
\end{align*}
with $( v_n^i, d_n^i) \in \AC([0,T]) $ for all $i=1,\ldots ,n$.
 
Our approximation reads as: 
\begin{subequations}\label{eq:dis}
Find $(\fn v, \fn d ) \in\AC([0,T]; W_n \times Z_n)$ such that 
\begin{align}
\begin{split}
( \partial_t {\fn v}, \f w  ) +( ( \fn v \cdot \nabla ) \fn v, \f w  ) -(\nabla \fn d ^ T ( | \fn d |^2 I - \fn d \o \fn d )   \fn q , \f w  )+ \left (\f T^L_{n}: \nabla \f w \right )&= \left \langle  \f g   , \f w\right  \rangle,\\
\fn v(0) &= P_n \f v_0 \,,
\end{split}
\label{vdis}\\
\begin{split}
\left  ( \partial_t \fn d +  \left ((|\fn d|^2 I-\fn d \o \fn d \right  ) \left (( \fn v \cdot \nabla ) \fn d 
- \skn{v} \fn d  +  \lambda  \syn v \fn d+  \fn q \right ) , \f z\right )
 &=0 \,,
\\ \fn d(0)-  \Sr \f d_1 &= R_n(\f d_{0}- \Sr \f d_1 )\,
\end{split}\label{ddis}
\end{align}
holds for all $ \f w \in W_n$ and $ \f z \in Z_n$,
where  $ \f q_{n} $ is given by the projection of the variational derivative of the free energy
\begin{align}
\begin{split}
{\f q_{n}} : = {}&
R_n\left (- \Lap \fn d - \di \left (\fn d \cdot \f \Theta \dreidots \nabla \fn d \o \fn d \right ) + \nabla \fn d : \f \Theta \dreidots \nabla \fn d \o \fn d\right ) \\ & -R_n \left (  \chi_{\|} ( \fn d \cdot \f H ) \f H - \chi_{\bot} \f H \times ( \f H \times \fn d)  \right ) + \gamma \fn d  \, , \end{split}
\label{qn}
\end{align}
with $\gamma \in \R$, which is chosen to vanish for simplicity, \textit{i.e.}, $\gamma =0$, and 
\begin{align}
\begin{split}
{\f T}_{n}^L :={}&  (\mu_1+\lambda^2) (\fn d \cdot  \syn v \fn d )(\fn d \otimes \fn d)+\mu_4 \syn v  + (\mu_5+\mu_6 -\lambda^2)  \left ( \fn d \otimes \syn v \fn d \right)_{\sym}\\&- \lambda \left (\fn d \otimes (| \fn d|^2 I - \fn d \o \fn d) \fn q     \right )_{\sym} 
- | \fn d |^2 \left (\fn d \otimes \fn q   \right)_{\skw} \, 
\end{split}
\label{lesliedis}
\end{align}%
is the approximate Leslie stress.
\end{subequations}%

Note that in comparison to formulation~\eqref{Leslie}, we replaced
 $1$ by $| \fn d|^2$ in the second line of~\eqref{lesliedis}. In the limit this should be the same, which motivates this choice for the approximate system.  
By substituting $\fn e$, we replace the time derivative $\t \fn d$ in~\eqref{vdis} and this allows to write the system~\eqref{eq:dis} as an ordinary differential equation in finite dimensions.
The solvability of this approximate system is rather standard and we refer to~\cite{unsere} for more details.
We also replaced the Ericksen-stress $\di (\nabla \f d^T \partial F / \partial \nabla \f d)  $ by $ - \nabla \fn d ( | \fn d|^2 I - \fn d \o \fn d ) \fn q$. This is motivated by the integration-by-parts formula~\cite[Equation~21]{unsere} and since for the continuous system it should hold $|\fn d |=1 $ as well as $\nabla | \fn d |^2 = 2 \nabla \fn d^T \fn d =0$.  Choosing the Ericksen--stress as in~\eqref{vdis} assures that the energy equality is valid in the approximate setting. 
The term $(|\fn d|^2I - \fn d \o \fn d )$ can also be written as $ - \fn d \times \fn d \times $ (compare to the second approximation scheme in~\cite{prohl}).

Note that there is a free parameter in the system, $\gamma$ can be chosen arbitrarily, since it does not change the other parts of the system. It can be used as a normalizing constant, for example to achieve $(\fn q , \fn d ) =0 $. 
That $\fn q$ is only defined up to an additive shift by $\fn d$ corresponds to the fact that $\fn q$ should approximate the derivative of $F$ taking values in the sphere and this derivative is an element of the cotangent space of the sphere. Since it holds $\f h \cdot \f d $ for every element $\f h$ in the tangent space at $\f d$, the cotangent space can be chosen arbitrarily in the direction $\f d$. 
\begin{theorem}\label{thm:dis}
Let $\Omega$ be a bounded domain with sufficiently regular boundary and let the assumption~\eqref{parodi},~\eqref{con} and~\eqref{frei} be fulfilled.
%
%
For the solutions $(\fn v , \fn d )$ to the semi-discrete approximate problem~\eqref{eq:dis}, it holds under the Assumption~\eqref{AssumptionA} on the discrete spaces that 
\begin{align*}
\frac{1}{2}\mathcal{E}&(\fn v (t) , \fn d(t) , \f H | \vv(t) , \dd(t) , \HH ) + \| \f H - \HH \|_{ \f L^2}^2 + \frac{1}{2}\inttet{\mathcal{W}(\fn v (t) , \fn d(t) | \vvn(t) , \ddn (t)) \exp{\left (\int_s^t\mathcal{K}(\tau)\de\tau\right )}}\\
 \leq{}&  \mathcal{D}_0(\fn v (0) , \fn d(0) , \f H | \vv(0) , \dd(0), \HH )  
  \exp{\left ( \inttet{\mathcal{K}(s) }\right )} 
\\&
 + \inttet{\left [ \left (\mathcal{A}_n(\vvn,\ddn), \begin{pmatrix}
 \vvn- \fn v  \\ \fn d \times ( \tq_n - \fn q + \f a_{n})
 \end{pmatrix}\right )+ \langle (I - R_n) \t \dd , \f q(\dd) - \f q(\fn d)\rangle \right ]\exp{\left (\int_s^t\mathcal{K}(\tau)\de\tau\right )}
 }
 \\
 & + \inttet{\left [\left ( \mathcal{A}_n(\fn v ,\fn d ) , \begin{pmatrix}
 P_n \vvn - \vvn \\ \fn d\times(R_n \f a_{n}- \f a_{n}) 
 \end{pmatrix} \right )+ \left ( \t \fn d ( | \fn d|^2 -1) , R_n \f a_n -\f a_n \right )    \right ]\exp{\left (\int_s^t\mathcal{K}(\tau)\de\tau\right )}}\\&
 + \inttet{\left (( 1 - |\ddn|^2 ) \t \ddn + \frac{ 1}{2}\t | \ddn|^2 \ddn,  \tq_n - \fn q + \f a_{n}
\right )  \exp{\left (\int_s^t\mathcal{K}(\tau)\de\tau\right )}} 
\numberthis\label{discreterelativeEnergy}
\end{align*}
for all $(\vvn, \ddn)$ fulfilling~\eqref{regtest} 
as well as $ \vv \in L^2(0,T; \Hc \cap \V)$ 
and $\tr (\dd ) =\f d_1$.
Here $\mathcal{A}_n$ is given similar to~\eqref{A} by 
 \begin{align}
 {\mathcal{A}}_n( \vvn ,\ddn)  = \begin{pmatrix}
\t \vvn + ( \vvn \cdot \nabla )\vvn - \nabla \ddn^T ( | \ddn |^2 I - \ddn \o \ddn ) \tq_n - \di \tilde{\f T}^L_n- \f g \\
\ddn \times  \left  ( \t \ddn  ( \vvn \cdot \nabla ) \ddn - \skvn \ddn + \lambda \syvn \ddn + \tq _n \right )
\end{pmatrix}\,\label{An}
\end{align}
and $\tq_n$  by 
\begin{align}
\tq_n := {} R_n \left ( - \Lap \ddn - \di ( \ddn\cdot \f \Theta \dreidots \nabla \ddn \o \ddn) + \nabla \ddn : \f \Theta \dreidots \nabla\ddn \o \ddn- \chi_{\|} ( \ddn \cdot \HH ) \HH + \chi_{\bot} \HH \times ( \HH \times \ddn)  \right ) \,
\\
 \intertext{as well as $\f a_{n} $ by}
\f a_{n} :={} \frac{1}{k } \left ( ( \ddn  - \fn d ) )\cdot \f\Theta \dreidots \nabla \ddn \o \ddn \right ) : \f \Theta \dreidots \nabla \ddn \o \ddn+\chi_{\|} ( \HH -  \f H )(\ddn \cdot \HH) - \chi_{\bot}( \HH -  \f H )  \times ( \HH \times \ddn ) \,.\label{anm}
\end{align}
The term $\tilde{\f T}_n^L $ is given by $\f T^L_n $ with $\fn d$, $\fn v$, and $\fn q$ replaced by $\dd $, $\vv$, and $\tq_n$, respectively. 
Additionally, the abbreviation $ \langle\cdot  , \f q(\cdot) \rangle $ is defined for all $ \f l \in \He \cap \,\f L^\infty $ and $\f h   \in \He \cap \,\f L^\infty $ via
\begin{align*}
 \langle \f l , \f q( \f h )) :={}& \left ( \nabla \f l  ; \f \Lambda : \nabla \f h + \f h \cdot \f \Theta \dreidots \nabla \f h \o \f h \right ) +  \left ( \f l , \nabla \f h  : \f \Theta \dreidots \nabla \f h\o \f h - \chi_{\|} \f H ( \f h \cdot \f H ) + \chi_{\bot} \f H\times ( \f H \times \f h) \right )\,.
\numberthis\label{tddqn}
\end{align*}

\end{theorem}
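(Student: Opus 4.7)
The plan is to mimic the continuous relative-energy calculation from~\cite{diss}, keeping careful track of every discrepancy produced by the Galerkin projections $P_n$, $R_n$ and by the fact that $|\fn d|\ne 1$ in general. First, I would establish a discrete energy (in)equality: insert $\f w=\fn v$ in~\eqref{vdis} and $\f z=\fn q$ in~\eqref{ddis}, exploiting that the replacement of the Ericksen stress by $-\nabla\fn d^T(|\fn d|^2 I-\fn d\o\fn d)\fn q$ together with the specific form~\eqref{lesliedis} of $\f T^L_n$ cancels the convective and rotational terms exactly, leading to conservation of $\tfrac12\|\fn v\|_{\Le}^2+\mathcal{F}(\fn d)$ plus the dissipation $\mathcal{W}(\fn v,\fn d|\f 0,\f 0)$. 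This is the discrete analogue of the energy equality used in~\cite{unsere}.

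Next I would differentiate $\mathcal{E}(\fn v(t),\fn d(t),\f H|\vvn(t),\ddn(t),\HH)$ in time. The cross terms in $\mathcal{E}$ produce pairings of $\t\fn v$ with $\vvn$, of $\nabla\t\fn d$ with $\f\Lambda:\nabla\ddn$, and of the analogous tensorial quantities. To use the discrete equation~\eqref{vdis} I need to test it against $\vvn$, but only $P_n\vvn\in W_n$ is admissible; writing $\vvn=P_n\vvn+(I-P_n)\vvn$ and using self-adjointness of $P_n$ in $\Ha$ generates precisely the contribution $(\mathcal{A}_n(\fn v,\fn d),(P_n\vvn-\vvn,\,\cdot))$ in~\eqref{discreterelativeEnergy}. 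Similarly, for the director equation one must test~\eqref{ddis} with a $Z_n$-element, which forces splitting $\ddn$-associated test functions through $R_n$ and produces both $(I-R_n)\t\dd$ paired with $\f q(\cdot)$ (via~\eqref{tddqn}) and the correction $\fn d\times (R_n\f a_n-\f a_n)$. The terms $\tq_n$ and $\fn q$ are themselves $R_n$-projected, so their testing against $R_n$-projected quantities is clean; the mismatch with the continuous $\f q$ is what the pairing $\langle(I-R_n)\t\dd,\f q(\dd)-\f q(\fn d)\rangle$ records.

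To recover the structure of the continuous operator $\mathcal{A}$ evaluated at $(\vvn,\ddn)$, I would add and subtract the full Navier--Stokes-like/director equations applied to the test functions, i.e.\ combine with $\mathcal{A}_n(\vvn,\ddn)$ tested against $(\vvn-\fn v,\fn d\times(\tq_n-\fn q+\f a_n))$. The auxiliary vector $\f a_n$ in~\eqref{anm} is precisely designed so that the quadratic mismatch in the magnetic part $-\tfrac{\chi_\|}{2}\|\f d\cdot\f H-\dd\cdot\HH\|^2-\tfrac{\chi_\bot}{2}\|\cdot\|^2$ and the anisotropic energy term $\tfrac{1}{2k}\|(\f d-\dd)\cdot\f\Theta\dreidots\nabla\ddn\o\ddn\|^2$ can be absorbed. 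The terms $(1-|\ddn|^2)\t\ddn+\tfrac12\t|\ddn|^2\ddn$ appearing on the right-hand side come from using the algebraic identities in Section~\ref{sec:not} together with the fact that $|\fn d|\ne1$ pointwise: whenever one wants to replace a tensorial expression of the form $(|\fn d|^2 I-\fn d\o\fn d)(\cdot)$ by the ``true'' projection onto the tangent space of $\mathbb{S}^2$, the defect is exactly proportional to $1-|\ddn|^2$.

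Finally, I collect the quadratic error terms in $\mathcal{E}$ by the strong ellipticity~\eqref{kill} and the Young inequality to produce a prefactor $\mathcal{K}(t)$ matching~\eqref{K}; a Gronwall-type integration then yields the exponential weights $\exp(\int_s^t\mathcal{K}(\tau)\de\tau)$. The main obstacle, as in the continuous case~\cite{diss}, is the careful algebraic bookkeeping in Step~3: every convective and rotational term must be rewritten via the calculus identities of Section~\ref{sec:not} so that the resulting bilinear forms are either nonnegative (and absorbed on the left into $\mathcal{W}$), controllable by $\mathcal{K}\cdot\mathcal{E}$, or fit one of the four explicit right-hand side brackets. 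The novel technical point beyond~\cite{diss} is that each such manipulation now produces additional $P_n$/$R_n$ defects, and one must verify that these defects combine precisely into the four correction terms written in~\eqref{discreterelativeEnergy} without spurious remainders.
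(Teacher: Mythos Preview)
Your sketch follows essentially the same route as the paper's proof: discrete energy equality, expansion of the cross terms in $\mathcal{E}$ via the integration-by-parts identities, insertion of the approximate equations with the projection defects $P_n\vvn-\vvn$ and $R_n\f a_n-\f a_n$ recorded explicitly, addition/subtraction of $\mathcal{A}_n(\vvn,\ddn)$ to isolate the residual, term-by-term estimation against $\mathcal{K}\mathcal{E}+\delta\mathcal{W}$, and Gronwall. One small correction: the terms $(1-|\ddn|^2)\t\ddn+\tfrac12\t|\ddn|^2\,\ddn$ do \emph{not} arise from $|\fn d|\ne 1$ but from the purely algebraic decomposition $\t\ddn=(1-|\ddn|^2)\t\ddn+\tfrac12\t|\ddn|^2\,\ddn-\ddn\times(\ddn\times\t\ddn)$ applied to the test function's time derivative when converting the second component of $\tilde{\mathcal{A}}_n$ (which carries $\t\ddn$) into that of $\mathcal{A}_n$ (which carries $\ddn\times\t\ddn$); the defect $|\fn d|\ne 1$ is responsible only for the separate term $(\t\fn d(|\fn d|^2-1),R_n\f a_n-\f a_n)$.
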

The proof of Theorem~\ref{thm:dis} is executed in Section~\ref{sec:disrel}.  Beforehand, we derive \textit{a priori} estimates In Section~\ref{sec:apri} and extract a subsequence converging in appropriate spaces to be able to go to the limit in inequality~\eqref{discreterelativeEnergy}. 
\subsection{\textit{A priori} estimates and converging subsequence\label{sec:apri}}
In a first step, we show that the approximate solution obeys the norm restriction almost everywhere.
\begin{proposition}\label{prop:norm}
Let $(\fn v, \fn d)$ be a solution to the approximate system~\eqref{eq:dis}. Then it holds $| \fn d(\f x ,t)|\leq c$ a.e.~in $\Omega\times (0,T)$ and $| \f d(\f x ,t) | \ra 1$ as $n\ra \infty$ a.e.~in $\Omega\times (0,T)$.
\end{proposition}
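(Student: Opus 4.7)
The central structural fact is the pointwise identity
\[
\fn d\cdot\bigl[(|\fn d|^2 I-\fn d\otimes\fn d)\,\f w\bigr]=0\qquad\text{for every }\f w\in\R^3,
\]
which mirrors the continuous constraint $|\f d|=1$ and reflects that the multiplier $(|\fn d|^2 I-\fn d\otimes\fn d)=-\fn d\times(\fn d\times\,\cdot\,)$ projects onto the tangent plane to $\fn d$. Because $\partial_t\fn d\in Z_n$ and $R_n$ is the $\Le$-projection onto $Z_n$, the director equation~\eqref{ddis} is equivalent to the pointwise ODE
\[
\partial_t\fn d=-R_n\!\left[(|\fn d|^2 I-\fn d\otimes\fn d)\,\f G_n\right],\qquad
\f G_n:=(\fn v\cdot\nabla)\fn d-\skn v\,\fn d+\lambda\syn v\,\fn d+\fn q\,.
\]
Taking the scalar product with $\fn d$ and using the orthogonality to add and subtract $R_n$ produces the cornerstone identity
\[
\tfrac12\,\partial_t|\fn d|^2=\fn d\cdot(I-R_n)\!\left[(|\fn d|^2 I-\fn d\otimes\fn d)\,\f G_n\right],
\]
so that the evolution of $|\fn d|^2$ is governed purely by the projection error.

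For the uniform bound I would integrate this identity in time, apply the $\f L^\infty$-stability of $R_n$ provided by~\eqref{Asump2}, and combine with the semi-discrete \emph{a priori} estimates established earlier, namely $\fn d\in L^\infty(\He)$, $\fn v\in L^\infty(\Ha)\cap L^2(\V)$, and $\fn q\in L^2(\Le)$. A Gronwall-type argument for $\||\fn d(t)|^2\|_{\f L^\infty}$ then closes the estimate uniformly in $n$. The initial value $\fn d(0)=\Sr\f d_1+R_n(\f d_0-\Sr\f d_1)$ is uniformly bounded in $\f L^\infty$ by the $\f L^\infty$-stability of $R_n$ applied to the admissible datum $\f d_0-\Sr\f d_1\in\f L^\infty$, supplying the base case and yielding $|\fn d(\f x,t)|\le c$ a.e.~in $\Omega\times(0,T)$.

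For the pointwise convergence $|\fn d|\to 1$, the same cornerstone identity shows that $|\fn d(t)|^2-|\fn d(0)|^2$ is controlled in $L^1(L^1)$ by $\|(I-R_n)[\,\cdot\,]\|$ applied to a sequence bounded in $L^2(\Le)$ independently of $n$. The density $\overline{\bigcup_n Z_n}=\Hb\cap\,\f L^\infty$ together with the $\f L^\infty$-stability of $R_n$ gives $(I-R_n)\to 0$ strongly on bounded sets of $\Le$, hence $|\fn d|^2\to|\fn d(0)|^2$. Since $R_n(\f d_0-\Sr\f d_1)\to\f d_0-\Sr\f d_1$ in $\Le$ and $|\f d_0|=1$ a.e., passing to a subsequence yields $|\fn d(\f x,t)|\to 1$ a.e.~in $\Omega\times(0,T)$.

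\textbf{Main obstacle.} The delicate step will be the Gronwall closure for the $\f L^\infty$-in-space norm of $|\fn d|^2$ independently of~$n$: the driving term $(I-R_n)[(|\fn d|^2 I-\fn d\otimes\fn d)\f G_n]$ contains $\fn q$ and $\nabla\fn v$, which are only known to lie in $L^2$ in time at this stage. Making these terms admissible pointwise in space requires combining the $\f L^\infty$-stability of $R_n$ with Sobolev embeddings and the $\He$-bound on $\fn d$; once this technical closure is performed, the remainder of the proposition follows directly from the cornerstone identity.
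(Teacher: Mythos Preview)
The paper's route is far shorter and bypasses your ``main obstacle'' entirely: it asserts \emph{exact} pointwise conservation $|\fn d(\f x,t)|^2=|\fn d(\f x,0)|^2$, obtained by multiplying~\eqref{ddis} with $\fn d$ and using $(|\fn d|^2 I-\fn d\otimes\fn d)\fn d=0$. Both conclusions then follow from the discrete initial datum $\fn d(0)=\Sr\f d_1+R_n(\f d_0-\Sr\f d_1)$ alone: the $\f L^\infty$-stability of $R_n$ gives the uniform bound, and $R_n(\f d_0-\Sr\f d_1)\to\f d_0-\Sr\f d_1$ together with $|\f d_0|=1$ gives $|\fn d|\to 1$. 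No Gronwall argument, no a priori bounds on $\fn v$ or $\fn q$, and no projection-error term enter at all.

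Your cornerstone identity is the careful version of this step once the Galerkin projection is tracked, and you are right that for a generic $Z_n$ the projection $R_n$ need not preserve pointwise orthogonality to $\fn d$---the paper is silent on this. But your proposed closure does not work. A Gronwall bound for $\esssup_{\f x}|\fn d(\f x,t)|^2$ uniformly in $n$ needs $L^1$-in-time, $L^\infty$-in-space control of the right-hand side, and the $\f L^\infty$-stability of $R_n$ gives only $\|(I-R_n)\f h\|_{\f L^\infty}\le(1+c)\|\f h\|_{\f L^\infty}$, i.e.\ boundedness, not smallness: you are thrown back onto $\|\f G_n\|_{\f L^\infty}$. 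The energy estimates place $\nabla\fn v$ and $\fn d\times\fn q$ only in $L^2(0,T;\Le)$, and in three dimensions no Sobolev embedding from $\He$ reaches $\f L^\infty$; inverse inequalities would bring in $n$-dependent constants. (In the paper those energy estimates also come \emph{after} Proposition~\ref{prop:norm}, not before.) Your convergence step has the same defect: ``$(I-R_n)\to 0$ strongly on bounded sets of $\Le$'' is false---it would force the identity operator to be compact---so boundedness of $(|\fn d|^2 I-\fn d\otimes\fn d)\f G_n$ alone does not yield $(I-R_n)[(|\fn d|^2 I-\fn d\otimes\fn d)\f G_n]\to 0$.
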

\begin{proof}
Multiplying~\eqref{ddis} with $\fn d $ and integrating in time yields
\begin{align*}
| \fn  d(\f x , t) |^2 = | R_n (\f d_0 - \Sr \f d _0 )( \f x) + \Sr \f d_0( \f x)  |^2 \quad \text{for a.e.~} (\f x ,t) \in \Omega \times (0,T)\,.
\end{align*}
Note that $(|\fn d|^2 I -\fn d \o \fn d) \fn d = 0$.  Since $R_n$ is a stable projection in $\f L^\infty$, we observe
\begin{align*}
\| \fn d \|_{L^\infty(\f L^\infty)} \leq \| R_n (\f d_0- \Sr \f d _0 ) + \Sr \f d_0 \|_{\f L^\infty} \leq c \| \f d _0- \Sr \f d _0 \|_{\f L^\infty} + \| \Sr \f d_0 \|_{\f L^\infty} \leq c \,.
\end{align*}
The initial datum is assumed to fulfill the unit-vector restriction. 
Since $R_n(\f d_0 - \Sr \f d _0) + \Sr \f d_0 \ra \f d_0$ as $n\ra \infty$ and $| \f d_0 |=1$, it holds $ | \fn d( \f x ,t) | \ra 1$ as $ n\ra \infty$ a.e.~in $\Omega\times (0,T)$.

\end{proof}
\begin{proposition}\label{prop:enrgydis}
Let the assumptions of Theorem~\ref{thm:dis} be fulfilled and let $(\fn v , \fn d)$ be a solution to the semi-discrete problem~\eqref{eq:dis}.
Then the energy equality 
\begin{multline*}
\| \fn v(t)\|_{\Le}^2  + \mathcal{F}(\fn d(t) )+ \int_0^t \left[ (\mu_1 + \lambda^2 ) \| \fn d  \cdot \syn v \fn d \|_{L^2} ^2 + \mu_4 \| \syn v \|_{\Le}^2    \right ]\de s 
\\
+ \int_0^t\left [ (\mu_5 +\mu_6 - \lambda^2) \| \syn v\fn d\|_{\Le}^2 + \| \fn d \times  \fn q \|_{\Le}^2 \right ]\de s
\\
 = \| \fn v (0)\|_{\Le}^2 + \mathcal{F}(\fn d (0)) + \int_0^t \langle \f g, \fn v \rangle \de s 
\end{multline*}
is valid for every $t \in [0,T]$. 
We omit the dependence on $s$ under the time integral for brevity. 
\end{proposition}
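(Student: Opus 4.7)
The plan is to derive the energy identity by testing the two equations in the scheme \eqref{eq:dis} with carefully chosen elements of the Galerkin spaces and adding the resulting identities, exploiting that the Ericksen and Leslie stress tensors were specifically designed so that all cross terms cancel. Concretely, I would test \eqref{vdis} with $\f w = \fn v \in W_n$ and \eqref{ddis} with $\f z = \fn q$. The second choice is legitimate because $\fn q$ lies in $Z_n$: by \eqref{qn} (and the convention $\gamma = 0$), $\fn q$ is obtained as the $\Le$-projection $R_n$ of the variational derivative of the free energy, which takes values in $Z_n$.

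From \eqref{vdis} tested with $\fn v$, the convective term vanishes by the divergence-free property of $W_n$, leaving
$$\tfrac{1}{2}\tfrac{d}{dt}\|\fn v\|_{\Le}^2 - (\nabla \fn d^T (|\fn d|^2 I - \fn d \o \fn d)\fn q , \fn v) + (\f T^L_n : \nabla \fn v) = \langle \f g , \fn v \rangle.$$
A direct computation of $\f T^L_n : \nabla \fn v$ using the symmetries of $\syn v$ and the antisymmetry of $\skn v$ produces $(\mu_1+\lambda^2)(\fn d \cdot \syn v \fn d)^2 + \mu_4 |\syn v|^2 + (\mu_5+\mu_6-\lambda^2)|\syn v \fn d|^2$, plus the coupling contributions $-\lambda\,\fn d \cdot \syn v(|\fn d|^2 I - \fn d \o \fn d)\fn q$ and $-|\fn d|^2 \fn d \cdot \skn v \fn q$. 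From \eqref{ddis} tested with $\fn q$, using $\fn d \cdot \skn v \fn d = 0$, the identity $|\fn d|^2|\fn q|^2 - (\fn d \cdot \fn q)^2 = |\fn d \times \fn q|^2$, and the symmetry of $|\fn d|^2 I - \fn d \o \fn d$, I obtain
$$(\t \fn d, \fn q) + ((|\fn d|^2 I - \fn d \o \fn d)(\fn v\cdot\nabla)\fn d, \fn q) + \lambda \fn d \cdot \syn v(|\fn d|^2 I - \fn d \o \fn d)\fn q + |\fn d|^2 \fn d \cdot \skn v \fn q + \|\fn d \times \fn q\|_{\Le}^2 = 0.$$
Adding the two identities, the Ericksen stress term in the momentum equation cancels the convective coupling from the director equation (the identity $\nabla \fn d^T \f a \cdot \fn v = \f a \cdot (\fn v \cdot \nabla)\fn d$ together with the symmetry of $|\fn d|^2 I - \fn d \o \fn d$), and the two coupling terms involving $\lambda$ and $|\fn d|^2$ likewise cancel by construction of $\f T^L_n$ in \eqref{lesliedis}.

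It remains to recognize $(\t \fn d, \fn q) = \frac{d}{dt}\mathcal{F}(\fn d)$ (with the convention that $\mathcal{F}$ absorbs the magnetic contribution from \eqref{electroenergy}). Since $\fn d = \Sr \f d_1 + $ a $Z_n$-component, we have $\t \fn d \in Z_n$, so the selfadjointness of the projection gives $(\t \fn d, \fn q) = (\t \fn d, R_n[\dots]) = (\t \fn d, [\dots])$, where the bracket denotes the expression inside $R_n$ in \eqref{qn}. Integration by parts in the divergence terms of that expression produces no boundary contributions because $\t \fn d$ vanishes on $\partial\Omega$ (the trace $\Sr \f d_1$ is time-independent), and the resulting bulk integral is precisely $\frac{d}{dt}\int_\Omega F_{\f H}(\fn d, \nabla \fn d, \f H)\,\de\f x$ by the chain rule together with the definition of the variational derivative. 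Integrating in time from $0$ to $t$ then yields the claimed identity.

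The main obstacle I foresee is the last step, namely the rigorous identification $(\t \fn d, \fn q) = \frac{d}{dt}\mathcal{F}(\fn d)$: one has to carefully exploit that $\fn q$ is only the projection of the variational derivative (so testing with $\fn q$ does not \emph{a priori} produce a total derivative), then use $\t \fn d \in Z_n$ to remove the projection, and finally justify the integration by parts against the nonlinear Oseen--Frank term $-\di(\fn d \cdot \f\Theta \dreidots \nabla \fn d \o \fn d) + \nabla \fn d : \f\Theta \dreidots \nabla \fn d \o \fn d$ using the regularity $\fn d(t) \in \Hb \cap \f L^\infty$ guaranteed by Proposition~\ref{prop:norm} and the absence of boundary contributions from $\t \fn d$. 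Every other ingredient is a purely algebraic cancellation enabled by the precise form of $\f T^L_n$ and the factor $|\fn d|^2 I - \fn d \o \fn d$.
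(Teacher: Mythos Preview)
Your proposal is correct and follows essentially the same approach as the paper: test \eqref{vdis} with $\fn v$, test \eqref{ddis} with $\fn q\in Z_n$, use the algebraic identities $\fn d\cdot\skn v\fn d=0$ and $|\fn d|^2 I-\fn d\o\fn d=\rot{\fn d}^T\rot{\fn d}$ to produce the cross-product dissipation and cancel the coupling terms, and identify $(\t\fn d,\fn q)=\tfrac{d}{dt}\mathcal{F}(\fn d)$ via $\t\fn d\in Z_n$ (which removes $R_n$) together with the chain rule and the time-independence of the boundary data. The paper's proof is terser and cites an earlier work for the chain-rule step you spell out, but the argument is the same.
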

\begin{proof}
The proof is very similar to the proof of Proposition~\cite[Proposition~2]{unsere}. We test equation~\eqref{vdis} with $\fn v$ and equation~\eqref{ddis} with $\fn q$ and add them up
\begin{align*}
\begin{split}
&\frac{1}{2}\br{} \| \f v_n \|_{\Le}^2  + ( \t \f d_n ,  \f q_n) - \langle \nabla \f d_n^T ( | \fn d|^2 I - \fn d \o \fn d) \f q_n , \f v_n \rangle \\&\quad  + ( ( |\fn d|^2 I - \fn d \o \fn d)  ( \f v_n \cdot \nabla )\f d_n,  \f q_n)
 + ( \f T_n^L; \nabla \f v_n)
\\
&\quad-   ((| \fn d |^2 I - \fn d \o \fn d)\skn{v} \f d_n -   \lambda \syn v \f d_n ,  \f q_n)  + \left ( \fn q , (|\fn d |^2 I -\fn d \o \fn d ) \fn q\right ) 
=  \langle \f g , \f v_n\rangle
\, .
\end{split}
\end{align*}
Note that $\fn q$ is an element of $Z_n$ since the projection $R_n$ is applied.
Inserting the Definition  of $\f T^L_n$  yields
\begin{align*}
( \f T_n^L; \nabla \f v_n) 
={}& (\mu_1 + \lambda^2 ) \| \fn d  \cdot \syn v \fn d \|_{L^2} ^2 + \mu_4 \| \syn v \|_{\Le}^2    
+ (\mu_5 +\mu_6 - \lambda^2) \| \syn v\fn d\|_{\Le}^2 \\
&+ ((|\fn d|^2I - \fn d \o \fn d)   \skn{v}\f d_n , \f q_n)  - \lambda ((| \fn d |^2 I - \fn d \o \fn d)  \syn v \f d_n ,  \f q_n)\,,
\end{align*}
where we employed  $\fn d \cdot \skn v \fn d  = 0$.
We find $ ( \t \fn d , \fn q) = \de \mathcal F(\fn d) /\de t$ by  the chain rule~\cite[Equation~33]{unsere}.
Note that the prescribed boundary values $\f d_1$ are constant in time. 
 Employing the equation $ |\fn d |^2 I -\fn d \o \fn d  = \rot{\fn d} ^T \rot {\fn d}$ and integrating in time yields the assertion.
 
\end{proof}

\begin{corollary}\label{cor:H1}
Let the assumptions of Theorem~\ref{thm:dis} be fulfilled and let $(\fn v , \fn d)$ be a solution to the semi-discrete problem~\eqref{eq:dis}. There exists a possible small $\eta >0$ and two constants $c,\,C>0$ such that
\begin{subequations}\label{coerc}
\begin{align}
\eta\|  \fn d\|_{\He}^2- c \| \f d_1 \|_{\Hrand{1}}^2 \leq{}  \frac{k_1}{2} \| \di \fn d \|_{L^2}^2 + \frac{k_2 }{2} \| \curl \fn d \|_{\Le}^2 \leq{}& \mathcal{F}(\fn d) \leq C\,,\label{coerc1}\\
  \frac{k_3 }{2} \|( \di \fn d) \fn d \|_{\Le}^2 + \frac{k_4}{2}\| \fn d \cdot \curl \fn d \|_{L^2}^2 + \frac{k_5}{2}\| \fn d \times \curl \fn d \|_{\Le}^2 \leq{}& \mathcal{F}(\fn d) \leq C \label{coerc2}
\,. 
\end{align} 
\end{subequations}
\end{corollary}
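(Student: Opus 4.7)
I split the estimate into the upper bound (which follows from the energy equality) and the lower bound in~\eqref{coerc1} (which requires a div--curl identity together with a lifting argument for the inhomogeneous boundary data). The upper bound in~\eqref{coerc2} is immediate from the reformulation~\eqref{frei} of $F^{OF}$, since each of the three remaining summands carries a non-negative coefficient and is therefore bounded above by $\mathcal{F}(\fn d)$.

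For the upper bound $\mathcal{F}(\fn d(t)) \leq C$, I start from the energy equality of Proposition~\ref{prop:enrgydis}. The forcing term is estimated by duality and Young's inequality,
\begin{align*}
\int_0^t \langle \f g, \fn v \rangle \,\de s \leq \tfrac{\mu_4}{2}\int_0^t \|\syn v\|_{\Le}^2 \,\de s + c\|\f g\|_{L^2(\Vd)}^2,
\end{align*}
where Korn's inequality is used to control $\|\nabla \fn v\|_{\Le}$ by $\|\syn v\|_{\Le}$. The first summand is absorbed on the left-hand side of the energy equality. What remains is controlled by $\|\fn v(0)\|_{\Le}^2 + \mathcal{F}(\fn d(0))$, and these initial quantities are bounded independently of $n$ by the $\Ha$-stability of $P_n$, the $\He\cap\,\f L^\infty$-stability of $R_n$ from~\eqref{Asump2}, the continuity of $\Sr$ from Proposition~\ref{prop:fort}, and the assumed regularity of $\f v_0$, $\f d_0$ and $\f d_1$.

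For the lower bound in~\eqref{coerc1}, the key tool is the classical div--curl inequality
\begin{align*}
\|\nabla \f w\|_{\Le}^2 \leq c\left(\|\di \f w\|_{L^2}^2 + \|\curl \f w\|_{\Le}^2\right) \quad \text{for all } \f w \in \Hb,
\end{align*}
which follows by integrating by parts against $\nabla \f w$. I decompose $\fn d = (\fn d - \Sr \f d_1) + \Sr \f d_1$; since $\fn d$ has trace $\f d_1 = \tr(\Sr \f d_1)$ on $\partial\Omega$, the first summand lies in $\Hb$, so the inequality applies to it. Combining this with the triangle inequality, the continuity $\Sr : \Hrand{1} \to \He$ of Proposition~\ref{prop:fort}, and Poincaré's inequality, I obtain
\begin{align*}
\|\fn d\|_{\He}^2 \leq c\left(\|\di \fn d\|_{L^2}^2 + \|\curl \fn d\|_{\Le}^2\right) + c\|\f d_1\|_{\Hrand{1}}^2,
\end{align*}
which after rearrangement yields the asserted inequality with a sufficiently small $\eta>0$ and an appropriate $c>0$.

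The only nontrivial step is the div--curl inequality on $\Hb$; its extension to the inhomogeneous boundary case via the lifting $\Sr$ is routine once the homogeneous version is available. The rest is bookkeeping, with the main point being to verify that all constants can be chosen independently of the Galerkin level $n$, which is guaranteed by the uniform stability assumptions~\eqref{AssumptionA}.
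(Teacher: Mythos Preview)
Your proof is correct and follows essentially the same approach as the paper: the lower bound in~\eqref{coerc1} is obtained via the decomposition $\fn d = (\fn d - \Sr \f d_1) + \Sr \f d_1$ together with the coercivity of the div--curl form on $\Hb$ (which the paper phrases as strong ellipticity of $\f\Lambda$ and the estimate~\eqref{kill}, citing Chipot, while you derive it by the elementary integration-by-parts identity $\|\nabla \f w\|_{\Le}^2 = \|\di \f w\|_{L^2}^2 + \|\curl \f w\|_{\Le}^2$ for $\f w\in\Hb$), and the upper bound $\mathcal F(\fn d)\le C$ is extracted from the energy equality via Young and Korn exactly as you describe.
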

\begin{proof}
The first inequality holds since $\f \Lambda$ (see~\eqref{Lambda}) is a strongly elliptic tensor. 
For all $\f \varphi \in \Hb$ this implies that $ \| \nabla \f \varphi(t)\|_{\Le} ^2 \leq c \left ( \nabla \f \varphi (t) ; \f \Lambda : \f \varphi (t) \right )$ holds for a.e.~$t \in (0,T)$ for some $c >0$.
Since $\fn d - \Sr \f d_1 \in \Hb$, this implies that there exists a $\eta>0$ such that
\begin{align*}
\eta \| \nabla  \fn d \|_{\Le}^2  \leq{}&  \eta \| \nabla  (\fn d- \Sr \f d_1) \|_{\Le}^2  + \eta \| \nabla \Sr \f d_1 \|_{\Le}^2 \leq  \frac{1}{4}\left ( \nabla \fn d - \nabla \Sr \f d _1   ; \,\f \Lambda : (\nabla \fn d- \nabla \Sr \f d _1 ) \right )+ \eta \| \nabla \Sr \f d_1 \|_{\Le}\\ \leq{}&  \frac{1}{2}\left ( \nabla \fn d   ; \,\f \Lambda : \nabla \fn d \right )+ c \|  \f d_1 \|_{\Hrand{1}} \,.
\end{align*}
In the last estimate we used Young's inequality and the property of the extension operator $\Sr$ (see Proposition~\ref{prop:fort}). With Poincar\'{e}'s estimate and again the extension operator $\Sr$ (see Proposition~\ref{prop:fort}) we find
\begin{align*}
\|   \fn d \|_{\Le} \leq  \|   \fn d- \Sr \f d_1 \|_{\Le}  +  \|  \Sr \f d_1 \|_{\Le}  \leq c \| \nabla(\fn d - \Sr \f d_1) \|_{\Le}^2  +  \|  \Sr \f d_1 \|_{\Le}  \leq c(\| \nabla \fn d \|_{\Le} + \| \f d_1 \|_{\Hrand{1}})\,
\end{align*}
and thus~\eqref{coerc1}. 
The estimate~\eqref{coerc2}  follows from the definition of $\mathcal{F}$ (see~\eqref{frei}) and Proposition~\ref{prop:enrgydis}. 
\end{proof}

\begin{proposition}\label{prop:time}
Let the assumptions of Theorem~\ref{thm:dis} be fulfilled and let $(\fn v , \fn d)$ be a solution to the semi-discrete problem~\eqref{eq:dis}. Then there exists a constant $c>0$ such that
\begin{align*}
\| \t \fn v \|_{L^2((\Hc \cap \, \V )^* )}+\| \t \fn d \|_{L^2 ( \f L^{3/2})}\leq c 
\end{align*}
for all $n\in\N$.
\end{proposition}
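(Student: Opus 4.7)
The strategy is to read $\partial_t \fn v$ and $\partial_t \fn d$ off the equations \eqref{vdis}--\eqref{ddis} and to bound every remaining term using only the \textit{a priori} information already collected: $\fn v \in L^\infty(\Ha)\cap L^2(\V)$ from Proposition~\ref{prop:enrgydis}, $\fn d \in L^\infty(\He)\cap L^\infty(\f L^\infty)$ from Corollary~\ref{cor:H1} and Proposition~\ref{prop:norm}, and the crucial dissipation bound $\|\fn d\times\fn q\|_{L^2(\Le)}\le c$. The key observation is that the factor $(|\fn d|^2 I -\fn d\o\fn d)\fn q=-\fn d\times(\fn d\times\fn q)$ is bounded in $L^2(\Le)$ even though $\fn q$ itself is not controlled in $\Le$, and every occurrence of $\fn q$ in both equations appears precisely inside such a projection.

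For $\partial_t \fn v$: since $\partial_t \fn v\in W_n$, for $\f w\in \Hc\cap\V$ one has $(\partial_t\fn v,\f w)=(\partial_t\fn v,P_n\f w)$, so \eqref{vdis} gives a representation of the time derivative in which $P_n\f w$ acts as test function. I would then estimate each term using $\Hc\hookrightarrow \f L^\infty$ in three space dimensions:
\begin{align*}
|((\fn v\cdot\nabla)\fn v,P_n\f w)|&\le \|\fn v\|_{\Le}\|\nabla\fn v\|_{\Le}\|P_n\f w\|_{\f L^\infty},\\
|(\nabla\fn d^{T}(|\fn d|^2 I-\fn d\o\fn d)\fn q,P_n\f w)|&\le \|\nabla\fn d\|_{\Le}\|\fn d\times\fn q\|_{\Le}\|P_n\f w\|_{\f L^\infty},\\
|(\f T_n^L:\nabla P_n\f w)|&\le \|\f T_n^L\|_{\Le}\|\nabla P_n\f w\|_{\Le},\\
|\langle\f g,P_n\f w\rangle|&\le \|\f g\|_{\Vd}\|P_n\f w\|_{\V},
\end{align*}
where $\|\f T_n^L\|_{\Le}$ is bounded by a linear combination of $\|\syn v\|_{\Le}$ and $\|\fn d\times\fn q\|_{\Le}$ times powers of $\|\fn d\|_{\f L^\infty}$. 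All four right-hand sides are of the form (something in $L^2(0,T)$)$\cdot\|P_n\f w\|_{\Hc\cap\V}$, and Assumption~\eqref{Asump1} gives $\|P_n\f w\|_{\Hc\cap\V}\le c\|\f w\|_{\Hc\cap\V}$, so we conclude $\|\partial_t\fn v\|_{L^2((\Hc\cap\V)^*)}\le c$.

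For $\partial_t \fn d$: since $\partial_t\fn d\in Z_n$ and $R_n$ is the $\Le$-orthogonal projection, for any $\f \varphi\in \f L^{3}$ one has $(\partial_t\fn d,\f \varphi)=(\partial_t\fn d,R_n\f \varphi)$, and \eqref{ddis} yields
\begin{align*}
(\partial_t\fn d,\f \varphi)=-\bigl((|\fn d|^2 I-\fn d\o\fn d)\bigl((\fn v\cdot\nabla)\fn d-\skn{v}\fn d+\lambda\syn{v}\fn d+\fn q\bigr),R_n\f \varphi\bigr).
\end{align*}
By Hölder, $(\fn v\cdot\nabla)\fn d\in L^2(\f L^{3/2})$ thanks to $\fn v\in L^2(\f L^6)$ and $\nabla\fn d\in L^\infty(\Le)$, while the remaining terms, multiplied by the bounded factor $|\fn d|^2 I-\fn d\o\fn d$, lie in $L^2(\Le)\subset L^2(\f L^{3/2})$ (using again $-\fn d\times(\fn d\times\fn q)$ for the $\fn q$-term). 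To absorb $R_n$, I would show that $R_n$ is bounded on $\f L^3$ by Riesz--Thorin interpolation between $\|R_n\|_{\Le\to\Le}\le 1$ and $\|R_n\|_{\f L^\infty\to\f L^\infty}\le c$ from \eqref{Asump2}, so that $\|R_n\f \varphi\|_{\f L^3}\le c\|\f \varphi\|_{\f L^3}$. Duality then yields $|(\partial_t\fn d,\f \varphi)|\le c\,\Psi(t)\|\f \varphi\|_{\f L^3}$ with $\Psi\in L^2(0,T)$, hence $\|\partial_t\fn d\|_{L^2(\f L^{3/2})}\le c$.

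The main obstacle is the term involving $\fn q$, which is not known to be bounded in $\Le$ by itself; the proof hinges on the algebraic identity $(|\fn d|^2 I-\fn d\o\fn d)\fn q=-\fn d\times(\fn d\times\fn q)$ together with the uniform dissipation bound on $\fn d\times\fn q$ from Proposition~\ref{prop:enrgydis}, which is exactly what motivated writing the Leslie tensor in the form~\eqref{Leslie} and the Ericksen stress in the form used in~\eqref{vdis}. The secondary technical point is the $\f L^{3/2}$--$\f L^{3}$ duality for $\partial_t\fn d$, requiring the interpolation-based $\f L^3$-stability of $R_n$.
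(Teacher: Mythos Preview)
Your proposal is correct and follows essentially the same route as the paper: the paper refers to \cite[Proposition~3]{unsere} for the $\partial_t\fn v$ bound (your argument is a fleshed-out version of that), and for $\partial_t\fn d$ it tests \eqref{ddis} with $R_n\f\varphi$, estimates the convection and rotation terms against $\|R_n\f\varphi\|_{\f L^3}$ and the remaining terms against $\|R_n\f\varphi\|_{\f L^2}$, and invokes the same $\f L^2$--$\f L^\infty$ interpolation to obtain $\f L^3$-stability of $R_n$. Your explicit remark that $|\fn d|^2(\fn d\otimes\fn q)_{\skw}$ and the Ericksen stress only involve $\fn d\times\fn q$ is exactly the point the paper relies on implicitly.
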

\begin{proof}
The bound on the sequence $\{ \partial_t \fn v \}$ follows from similar arguments as in~\cite[Proposition~3]{unsere}.
For $\f \varphi \in L^2(0,T;\Hb)$, we test equation~\eqref{ddis} with $R_n \f \varphi$. Note that the projection $R_n$ is necessary since equation~\eqref{ddis} is only well-defined for test functions with values in $Z_n$.  It holds
\begin{align*}
\| \t \fn d \|_{L^2(\f L^{3/2})}& \leq{}\\\quad \sup_{\|\f \varphi\|_{ L^2(\f L^3 )}=1}&\Big( \| \fn d\|_{L^\infty(\f L^{\infty })}^2\left ( \| \fn v \|_{L^2 ( \f L^6)} \| \fn d \|_{L^\infty(\He)} + \| \skn v \|_{L^2(\Le)} \| \fn d \|_{L^\infty(\f L^6)} \right )  \| R_n \f \varphi\|_{L^2(\f L^3)} \\
&+ \left (| \lambda| \| \syn v \fn d \|_{L^2(\Le)} \| \fn d \|_{L^\infty(\f L^\infty )}^2 + \| \fn d \times \fn q \|_{L^2 (\Le)} \| \fn d \|_{L^\infty(\f L^\infty)} \right ) \| R_n \f \varphi \|_{L^2(\f L^2)}\Big ) \,.
\numberthis\label{dtdn}
\end{align*}
It is essential that the $\Le$-projection  $R_n$ is $\f L^\infty$-stable. This together with an interpolation argument between $\f L^2 $ and $\f L^\infty$ grants  $\| R_n \f \varphi \|_{L^2(\f L^3) }\leq \|  \f \varphi \|_{L^2(\f L^3)}$. 
All terms on the right-hand side of the previous estimate are bounded in regard of Proposition~\ref{prop:enrgydis}, Corollary~\ref{cor:H1}, and Korn's inequality~\cite[Theorem 10.1]{mclean}. 
\end{proof}
\begin{proposition}\label{prop:wkonv}
Let the assumptions of Theorem~\ref{thm:dis} be fulfilled and let $\{(\fn v , \fn d)\}$ be the sequence of solutions to the semi-discrete problems~\eqref{eq:dis}. Then
there exists a subsequence, which is not relabeled such that
\begin{subequations}\label{wkonv}
\begin{align}
   \fn v &\stackrel{*}{\rightharpoonup}  \f v &\quad& \text{ in } L^{\infty} (0,T;\Ha)\,,\label{wr:vstern}\\
 \fn v &\rightharpoonup  \f v &\quad& \text{ in }  L^{2} (0,T;\V)\,,\label{wr:v}\\
\fn d \times  \fn q &\rightharpoonup \f d \times  {\f q} &\quad& \text{ in }  L^{2} (0,T;\Le)\,,\label{wr:E}\\
\syn v \fn d &\rightharpoonup   \sy v \f d &\quad& \text{ in }  L^{2} (0,T;\Le)\,,\label{wr:Dd}\\
\fn d\cdot \syn v \fn d &\rightharpoonup  \f d \cdot \sy v \f d  &\quad& \text{ in }  L^{2} (0,T;L^2)\,,\label{wr:dDd}\\
\t \fn v  &\rightharpoonup \t \f v  &\quad& \text{ in } L^2(0,T; (  \Hc \cap\,\V)^*)\, ,\label{rtimev}\\
\t\fn d &\rightharpoonup \t \f d &\quad& \text{ in } L^{2}(0,T; \f L^{3/2} ) \, ,\label{rtimed}
\\
\fn d &\stackrel{*}{\rightharpoonup}  \f d &\quad& \text{ in } L^{\infty} (0,T;\He)\,. \label{wr:ddstern}\\
\fn d &\ra \f d &\quad& \text{ in } L^q ( 0,T; \f L^r )\text{ for any } q
 \in [1,\infty)\, ,  r \in [1,6)\, ,\label{sr:d}\\
 k_3(\di\fn d)\fn d &\stackrel{*}{\rightharpoonup} k_3(\di \f d)  \f d &\quad& \text{ in } L^{\infty} (0,T;\Le)\,. \label{wr:ddiv}\\
k_4\fn d\cdot \curl \fn d  &\stackrel{*}{\rightharpoonup} k_4 \f d\cdot \curl \f d  &\quad& \text{ in } L^{\infty} (0,T;L^2)\,. \label{wr:dcurl}\\
   k_5 \fn d \times \curl \fn d &\stackrel{*}{\rightharpoonup}  k_5 \f d\times \curl \f d  &\quad& \text{ in } L^{\infty} (0,T;\Le)\,. \label{wr:ddcurl}
\end{align}
\end{subequations}
\end{proposition}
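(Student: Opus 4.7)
The plan is to combine the \textit{a priori} bounds from Proposition~\ref{prop:enrgydis}, Corollary~\ref{cor:H1}, and Proposition~\ref{prop:time} with the $L^\infty$ bound of Proposition~\ref{prop:norm}, extract weak/weak-$*$ limits by Banach--Alaoglu, upgrade the director convergence to strong via Aubin--Lions--Simon, and then pass to the limit in the nonlinear products by pairing one weakly convergent factor with a strongly convergent one. First I would collect the bounds: the energy equality together with $\mu_4>0$ and Korn's inequality in $\V$ yields that $\{\fn v\}$ is bounded in $L^\infty(0,T;\Ha)\cap L^2(0,T;\V)$, producing~\eqref{wr:vstern} and~\eqref{wr:v}; the dissipation terms on the left-hand side of Proposition~\ref{prop:enrgydis} then directly give boundedness of $\{\fn d\times\fn q\}$, $\{\syn v\,\fn d\}$ in $L^2(0,T;\Le)$ and of $\{\fn d\cdot\syn v\,\fn d\}$ in $L^2(0,T;L^2)$, whence the weak limits in~\eqref{wr:E}--\eqref{wr:dDd} along a further subsequence; at this stage the identifications of the right-hand sides in~\eqref{wr:Dd}--\eqref{wr:dDd} are postponed, while the limit in~\eqref{wr:E} is, by definition, labeled $\f d\times\f q$. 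Corollary~\ref{cor:H1} and Proposition~\ref{prop:norm} yield $\{\fn d\}$ bounded in $L^\infty(0,T;\He\cap\f L^\infty)$, hence~\eqref{wr:ddstern}, and likewise $(\di\fn d)\fn d$, $\fn d\cdot\curl\fn d$, $\fn d\times\curl\fn d$ are bounded in $L^\infty(0,T;\Le)$ or $L^\infty(0,T;L^2)$, giving the weak-$*$ convergences~\eqref{wr:ddiv}--\eqref{wr:ddcurl} to limits yet to be identified.

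Next, Proposition~\ref{prop:time} provides the bounds $\|\t\fn v\|_{L^2((\Hc\cap\V)^*)}+\|\t\fn d\|_{L^2(\f L^{3/2})}\le c$, so~\eqref{rtimev} and~\eqref{rtimed} follow by Banach--Alaoglu. Combining $\fn d$ bounded in $L^\infty(0,T;\He)$ with $\t\fn d$ bounded in $L^2(0,T;\f L^{3/2})$, together with the compact embedding $\He\hookrightarrow\hookrightarrow\f L^r$ for every $r<6$ and the continuous embedding $\f L^r\hookrightarrow\f L^{3/2}$, the Aubin--Lions--Simon lemma yields strong convergence $\fn d\to\f d$ in $L^q(0,T;\f L^r)$ for every $q\in[1,\infty)$ and $r\in[1,6)$, which is~\eqref{sr:d}; interpolating with the uniform $L^\infty$ bound from Proposition~\ref{prop:norm} shows that in fact $\fn d\to\f d$ in $L^p(0,T;\f L^p)$ for every $p\in[1,\infty)$.

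The nonlinear limits are now identified by pairing a weakly convergent factor with this strong convergence. For~\eqref{wr:Dd} and~\eqref{wr:dDd}, $\syn v\rightharpoonup\sy v$ in $L^2(0,T;\Le)$ (a consequence of~\eqref{wr:v}), while $\fn d\to\f d$ in $L^p(0,T;\f L^p)$ for every finite $p$ by the previous step, so $\fn d\otimes\fn d$ converges in every $L^p$, and the product $\syn v\,\fn d$ passes to $\sy v\,\f d$ weakly in $L^2(\Le)$; the scalar product $\fn d\cdot\syn v\,\fn d$ is handled analogously. For~\eqref{wr:ddiv}--\eqref{wr:ddcurl}, the first factors $\di\fn d$, $\curl\fn d$ converge weakly-$*$ in $L^\infty(0,T;\Le)$ from~\eqref{wr:ddstern}, and multiplication by the strongly convergent $\fn d$ (in any $L^q(\f L^r)$ with $r<6$, together with the uniform $\f L^\infty$ bound) identifies each limit; the uniform $\f L^\infty$ bound on $\fn d$ makes the products remain in $L^\infty(0,T;\Le)$ or $L^\infty(0,T;L^2)$ as asserted.

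The main obstacle is the identification step for products such as $\syn v\,\fn d$: a priori one has only weak convergence of $\syn v$ in $L^2(\Le)$ and weak-$*$ convergence of $\fn d$ in $L^\infty(\He)$, which is insufficient to conclude weak convergence of the product. The crucial enabler is Proposition~\ref{prop:norm}, because it yields a uniform $L^\infty$ bound on $\fn d$; this, coupled with~\eqref{sr:d} and interpolation, upgrades the convergence of $\fn d$ to strong $L^p(L^p)$ for all $p<\infty$, which is exactly the strong-weak pairing needed in $L^2(\Le)$. Note finally that~\eqref{wr:E} does not require such an identification: $\f d\times\f q$ is defined as the weak $L^2(\Le)$ limit of $\fn d\times\fn q$, whose concrete interpretation is supplied by the generalized Young-measure formulation discussed in the Young-measure remark following Theorem~\ref{thm:main}.
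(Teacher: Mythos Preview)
Your overall strategy---energy bounds plus Korn, Banach--Alaoglu for weak/weak-$*$ limits, Aubin--Lions for strong convergence of $\fn d$, then weak--strong pairing to identify the nonlinear limits---is exactly what the paper does, and your use of the uniform $\f L^\infty$ bound from Proposition~\ref{prop:norm} to interpolate up to $L^p(\f L^p)$ for all finite $p$ is a clean way to phrase the identification step.

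There is one genuine gap, concerning~\eqref{wr:E}. You write that ``$\f d\times\f q$ is defined as the weak $L^2(\Le)$ limit'' and that no identification is needed. But the notation $\f d\times\f q$ is not merely a label: it asserts that the weak limit, call it $\f a$, lies pointwise in the orthogonal complement of $\f d$, since $\f d\cdot(\f d\times\f q)=0$ for any $\f q$. A priori the weak $L^2(\Le)$ limit of $\fn d\times\fn q$ is just some $\f a\in L^2(0,T;\Le)$ with no structure. The paper closes this by observing that $\fn d\cdot(\fn d\times\fn q)=0$ for every $n$, and since $\fn d\to\f d$ strongly (your~\eqref{sr:d}) while $\fn d\times\fn q\rightharpoonup\f a$ weakly in $L^2(\Le)$, the product passes to the limit and gives $\f d\cdot\f a=0$ a.e. Only then is one entitled to write $\f a=\f d\times\f q$ for some $\f q$. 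The Young-measure remark you cite concerns the \emph{interpretation} of $\f q$ once this orthogonality is in hand; it does not replace the orthogonality argument itself. This is a short step, but it is not automatic and should be included.
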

\begin{proof}
The existence of a weakly and weakly$^*$ converging subsequence follows by standard arguments from the energy equality of Proposition~\ref{prop:enrgydis} and  Proposition~\ref{prop:time}.
Note that we can bound the right-hand side of the energy equality in Proposition~\ref{prop:enrgydis}. Indeed  due to Young's inequality and  Korn's inequality~\cite[Theorem~10.1]{mclean}, we find
$$ \langle \f g , \fn v \rangle \leq   \| \fn v \|_{\Hb} \| \f g \|_{\Vd} \leq  \frac{\mu_4}{2 c^2_{\text{Korn}}} \| \fn v \|_{\Hb} ^2 + \frac{c^2_{\text{Korn}}}{2 \mu_4 }\| \f g \|_{\Vd}^2\leq  \frac{\mu_4}{2 } \| \syn v \|_{\Le} ^2 + \frac{c^2_{\text{Korn}}}{2 \mu_4 }\| \f g \|_{\Vd}^2\,,$$ where $ c_{\text{Korn}}$ is the constant due to Korn's inequality. 
The first term on the right-hand  side of the above inequality chain can be absorbed in the left-hand side of the energy equality in Proposition~\ref{prop:enrgydis}. 
Thus, every term on the left-hand side of~\eqref{prop:enrgydis} is bounded for every $t
\in [0,T]$ and for every $n\in \N$. Taking the supremum over $t$ in every term individually grants the boundedness of the terms in the above indicated norms (see~\eqref{wkonv}).

The strong convergence follows from the Lions--Aubin compactness lemma
(see Lions~\cite[Th\'eor\`eme~1.5.2]{lions}).
For $\fn d$, we observe that $\He$ is compactly embedded in $\Le$, which implies strong convergence in $L^2(0,T;\Le)$ and together with the boundedness in $L^{\infty}(0,T;\He)$ also in $L^q(0,T;\f L^r)$
for any $q\in [1,\infty)$ and any $r \in [2,6)$.  
This strong convergence allows to identify the limits in~\eqref{wr:Dd} and~\eqref{wr:dDd}.
Corollary~\ref{cor:H1} grants the weak convergences~\eqref{wr:ddiv}-\eqref{wr:ddcurl} and the strong convergence~\eqref{sr:d} allows to identify the limits.
For the limit in~\eqref{wr:E}, we initially only get that $ \fn d \times R_n \fn q \rightharpoonup \f a $, for some $\f a\in L^2(0,T; \Le) $. Due to the strong convergence of $\fn d$ to $\f d$, it holds
\begin{align*}
0 = \fn d \cdot \rot{\fn d} R_n \fn q = \fn d \cdot (\fn d \times R_n \fn q)  \rightharpoonup \f d \cdot \f a\,.
\end{align*}
The vector $\f a$ is thus point-wise orthogonal to  $\f d$ in the usual Euclidean sense. Hence, there exists a vector $\f q$ such that $\f a = \f d \times \f q$, which is the assertion of~\eqref{wr:E}. 
The constants $k_3$, $k_4$, and $k_5$ are inserted in the convergence results~\eqref{wr:ddiv}--\eqref{wr:ddcurl} since these constants can also vanish. In this case, no convergence can be deduced. 
\end{proof}
\begin{corollary}\label{cor:Cw}
Let the assumptions of Theorem~\ref{thm:dis} be fulfilled and let $\{(\fn v , \fn d)\}$ be the sequence of solutions to the semi-discrete problems~\eqref{eq:dis}. Then
there exists a subsequence, which is not relabel such that
\begin{subequations}
\label{conv:weak}
\begin{align}
   \fn v &\ra \f v &\quad& \text{ in } \C_w ([0,T];\Ha)\,,\label{cw:v}\\
\fn d &\ra  \f d &\quad& \text{ in } \C_w ([0,T];\He)\,. \label{cw:d}\\
\f \Theta \dreidots \nabla \fn d \o \fn d  &\ra \f \Theta \dreidots \nabla \f d \o \f d&\quad& \text{ in } \C_w ([0,T];\Le)\,. 
\end{align}
\end{subequations}

\end{corollary}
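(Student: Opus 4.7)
The plan is to treat each of the three convergences as an instance of the weak Arzelà--Ascoli principle (sometimes attributed to Strauss): a sequence bounded uniformly in $L^\infty(0,T;X)$ with $X$ a reflexive Banach space, and uniformly equicontinuous when tested against a dense subset of $X^*$, has a subsequence converging in $\C_w([0,T];X)$; the limit is determined by the weak$^*$ convergence already established in Proposition~\ref{prop:wkonv}.

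For \eqref{cw:v} I would take $X=\Ha$. The uniform bound is \eqref{wr:vstern}, and Proposition~\ref{prop:time} yields, for every $\f\psi\in\Hc\cap\,\V$,
\begin{equation*}
|(\fn v(t)-\fn v(s),\f\psi)|\leq \|\f\psi\|_{\Hc\cap\,\V}\,\|\t\fn v\|_{L^2((\Hc\cap\,\V)^*)}\,|t-s|^{1/2}\,,
\end{equation*}
which is uniform Hölder-$1/2$ equicontinuity. Density of $\Hc\cap\,\V$ in $\Ha$ combined with the $L^\infty(\Ha)$-bound extends this to every $\f\psi\in\Ha$ by a $3\varepsilon$-argument. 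Claim \eqref{cw:d} follows the same scheme with $X=\He$ and dual witness space $\f L^3\hookrightarrow(\He)^*$ (using $\He\hookrightarrow\f L^3$ on the bounded domain): the estimate $\t\fn d\in L^2(\f L^{3/2})$ delivers Hölder-$1/2$ equicontinuity against $\f L^3$, which the $L^\infty(\He)$-bound promotes to all of $(\He)^*$.

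The third convergence is the delicate one. Its uniform $L^\infty(\Le)$-bound follows from Corollary~\ref{cor:H1} combined with Proposition~\ref{prop:norm}. For the pointwise-in-$t$ weak limit in $\Le$, I would upgrade \eqref{sr:d} to $\fn d\to\f d$ in $\C([0,T];\f L^r)$ for every $r<6$ via Simon's compactness theorem (applied with $\fn d$ bounded in $L^\infty(\He)\cap W^{1,2}(\f L^{3/2})$), then interpolate against the $\f L^\infty$-bound from Proposition~\ref{prop:norm} to reach $\C([0,T];\f L^p)$ for every $p<\infty$. Combined with \eqref{cw:d} (which already gives $\nabla\fn d(t)\rightharpoonup\nabla\f d(t)$ in $\Le$ for \emph{every} $t$), dominated convergence yields $\fn d(t)\cdot\f\psi\to\f d(t)\cdot\f\psi$ in $\Le$ for every $\f\psi\in\Le$ and every $t$, so that $\nabla\fn d(t)\o\fn d(t)\rightharpoonup\nabla\f d(t)\o\f d(t)$ in $\Le$ pointwise in $t$. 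For the uniform equicontinuity I would test against $\f\psi\in\mathcal{C}_c^\infty(\Omega;\R^{3^3})$, split the difference as
\begin{equation*}
( \nabla\fn d(t)\o\fn d(t)-\nabla\fn d(s)\o\fn d(s),\f\psi)=(\nabla(\fn d(t)-\fn d(s))\o\fn d(t),\f\psi)+(\nabla\fn d(s)\o(\fn d(t)-\fn d(s)),\f\psi)\,,
\end{equation*}
integrate the gradient in the first term by parts onto $\fn d(t)\cdot\f\psi$, and note that this factor is uniformly bounded in $\Hb$ thanks to the $L^\infty(\f L^\infty)$-bound on $\fn d$ and the $L^\infty(\Le)$-bound on $\nabla\fn d$. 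Both terms are then dominated by $C\|\f\psi\|_{\f W^{1,\infty}}\|\fn d(t)-\fn d(s)\|_{\Le}$. Interpolating $\|\fn d(t)-\fn d(s)\|_{\f L^{3/2}}\leq C|t-s|^{1/2}$ (from $\t\fn d\in L^2(\f L^{3/2})$) against the uniform $L^\infty(\f L^6)$-bound yields Hölder equicontinuity of order $1/3$. Density of $\mathcal{C}_c^\infty$ in $\Le$ together with the $L^\infty(\Le)$-bound on the product extends the estimate to all $\f\psi\in\Le$, and Arzelà--Ascoli concludes; contracting with the constant tensor $\f\Theta$ is harmless.

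The main obstacle is precisely this last equicontinuity: a naive time differentiation of $\nabla\fn d\o\fn d$ produces a $\nabla\t\fn d\o\fn d$ contribution, and the spatial derivative of $\t\fn d\in L^2(\f L^{3/2})$ is too singular to be estimated directly. The integration-by-parts step above circumvents this by transferring the gradient onto $\fn d(t)\cdot\f\psi$, a maneuver that is permissible only because Proposition~\ref{prop:norm} supplies the pointwise bound on $\fn d$ needed to keep $\fn d(t)\cdot\f\psi$ in $\Hb$ uniformly.
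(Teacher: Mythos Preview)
Your argument is correct. For \eqref{cw:v} and \eqref{cw:d} it coincides with the paper's: both invoke the Strauss-type weak Arzel\`a--Ascoli principle, using the $L^\infty$-in-time bounds together with the time-derivative estimates of Proposition~\ref{prop:time} for equicontinuity in the weaker dual topology.

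For the third convergence the two routes diverge in detail. The paper does not re-establish equicontinuity of $\nabla\fn d\o\fn d$ from scratch; instead it first obtains $\fn d\to\f d$ in $\C([0,T];L^p)$ for $p\in[2,3)$ via the elementary identity
\[
\int_\Omega|\fn d(t)-\f d(t)|^p\,\de\f x=\int_\Omega|\fn d(0)-\f d_0|^p\,\de\f x+p\int_0^t\int_\Omega|\fn d-\f d|^{p-2}(\fn d-\f d)\cdot(\t\fn d-\t\f d)\,\de\f x\,\de s,
\]
and then combines this \emph{directly} with the already proved \eqref{cw:d}: for $\f\psi\in L^{2p/(p-2)}$, the pairing $(\nabla\fn d(t)\o\fn d(t),\f\psi)=(\nabla\fn d(t),\fn d(t)\cdot\f\psi)$ converges uniformly in $t$ because $\fn d(t)\cdot\f\psi\to\f d(t)\cdot\f\psi$ uniformly in $\Le$ while $(\nabla\fn d(t),\cdot)\to(\nabla\f d(t),\cdot)$ uniformly by \eqref{cw:d}. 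This yields $\C_w([0,T];\f L^{2p/(p+2)})$ convergence, and the uniform $\Le$-bound from Corollary~\ref{cor:H1} then upgrades to $\C_w([0,T];\Le)$. Your route---Simon's lemma, interpolation to $\C([0,T];\f L^p)$ for all $p<\infty$, and an explicit integration-by-parts equicontinuity estimate---is longer but self-contained and makes the H\"older-$1/3$ modulus explicit. The paper's route is shorter because it reuses \eqref{cw:d} for the equicontinuity rather than re-deriving it; your route, on the other hand, avoids the slightly delicate ``weak convergence of one factor uniformly in $t$, strong convergence of the other uniformly in $t$'' product argument.
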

\begin{proof}
Due to the estimate of the time derivative in Proposition~\ref{prop:time}, we observe for the solution of the approximate Navier--Stokes-like equation that 
$$   W^{1,2}(0,T; (   \Hc \cap\,\V)^*)\hookrightarrow^c \AC([0,T];  (   \Hc \cap\,\V )^*) \hookrightarrow \C_w([0,T];  (   \Hc \cap\,\V )^*)\,.$$ The boundedness in~$L^\infty(0,T;\Ha)$ implies that there exists an $a_t\in \Ha$ and a not re-labeled subsequence such that
$\fn v (t) \rightharpoonup a_t$ in $\Ha$ as $n\ra \infty$ for a.\,e. $t\in (0,T)$.The weak convergence $ \fn v(t) \ra \f v(t) $ in $  (   \Hc \cap\,\V )^*$ as $n\ra \infty$ for a.\,e. $t\in (0,T)$ allows to identify the limit such that
$ \fn v(t) \rightharpoonup \f v(t) $ in $\Ha$ as $n\ra \infty$ for a.\,e. $t\in (0,T)$ (compare to~\cite[Page~297]{magnes}).

With similar arguments, we observe the second asserted convergence of Corollary~\ref{cor:Cw}, \textit{i.e.,}~\eqref{cw:d}. 
From the convergences~\eqref{rtimed} and ~\eqref{sr:d}, we observe by the equality
\begin{multline*}
\int_\Omega | \fn d(t) - \f d(t) |^p \de \f x =\\ \int_\Omega | \fn d(0)- \f d _0 | ^p \de \f x + \int_0^t \int_\Omega| \fn d(s) - \f d(s) | ^{p-2} ( \fn d(s) - \f d(s)) \cdot( \t \fn d(s)- \f d(s)) \de \f x \de s \,
\end{multline*}
that $ \fn d \ra \f d $ in $ \C([0,T] ; L^p(\Omega))$ for $p\in [2,3)$. 
%
%
%
 Together with~\eqref{cw:d},  this implies the convergence $ \f \Theta \dreidots \nabla \fn d \o \fn d  \ra \f \Theta \dreidots \nabla \f d \o \f d $ in $ \C_w([0,T] ; \f L^{2p/(p+2)})$ for $p\in (2,3)$.
 From the point-wise a.\,e.~bound due to Corollary~\ref{cor:H1}, we may conclude that the third asserted convergence of Corollary~\ref{cor:Cw} holds. 
\end{proof}
\begin{remark}
The assertions of Corollary~\ref{cor:Cw} are essential to go to the limit in the relative energy. The weak-lower semi-continuity of the $\Le$-norm grants that $\liminf_{n\in\N} \mathcal{E}_n \geq \mathcal{E}$. Note that this is the only step, where we needed the boundedness of the time derivatives of the sequence  $\{\fn v\}$. 
\end{remark}

\subsection{Proof of the approximate relative energy inequality and its convergence\label{sec:disrel}}

We are going to show that the appropriate dissipative formulation~\eqref{discreterelativeEnergy}
  is fulfilled by the solution of the semi-discrete problem~\eqref{eq:dis}.
  Then we prove that in the limit a dissipative solutions  (see Definition~\ref{def:diss})
 is attained. 
Therefore, the aim is to go to the limit with the discretization parameter.

First, we collect associated integration-by-parts formulas. These are very similar to the ones in~\cite[Proposition~5.1]{weakstrong} and~\cite[Proposition~5.4]{diss}, but somehow simpler. There are no measures involved, since we consider a problem discretized in space and we omit the additional difficulty of introducing the cross product.
\begin{corollary}

For $(\fn v , \fn d ) \in \AC(0,T; W_n\times Z_n)$, $( \vvn, \ddn ) $ fulfilling~\eqref{regtest}, and $\f H \in L^2 , \HH \in \f L^\infty $,  it holds
\begin{subequations}\label{intdn}
\begin{align}
( \fn v ( t) , \vvn(t)) - ( \fn v (0) , \vvn(0)) ={}& \int_0^t\left[( \fn v ( s) , \t \vvn (s) ) + ( \t \fn v (s) , \vvn(s))\right ] \de s \, ,\label{intvn}\\
\left ( \nabla \fn d(t) ;\, \f \Lambda : \nabla \ddn(t)\right ) - \left (  \nabla \fn d(0) ;\, \f \Lambda : \nabla\ddn(0)\right )
={} & \inttet{\left [ \left ( -  \Lap \fn d , \t \ddn  \right )
+\left ( \t \fn d , -  \Lap \ddn \right )\right ]}
\,,
\label{intd2n}
\end{align}
\begin{align*}
\big ( \nabla \fn d(t) \o \fn d (t) \dreidotkom \f \Theta&  \dreidots \nabla \ddn(t) \o \ddn (t) \big )
-
\left ( \nabla \fn d(0) \o \fn d (0) \dreidotkom \f \Theta \dreidots \nabla \ddn(0) \o \ddn (0) \right )\\
&-
 \left ((\nabla \fn d (t)-\nabla \ddn(t)  )\o (\fn d(t) -  \ddn(t) )\right ) \dreidots \f \Theta \dreidots (\nabla \ddn(t)  \o \ddn(t) )
  \\&+\left ((\nabla \fn d (0)-\nabla \ddn(0)  )\o (\fn d(0) -  \ddn(0) )\right ) \dreidots \f \Theta \dreidots (\nabla \ddn(0)  \o \ddn(0) )
\\
\geq {}& \inttet{\left ( \t \fn d , - \di \left ( \ddn \cdot \f \Theta \dreidots \nabla \ddn \o \ddn \right )+ \nabla \ddn : \f \Theta \dreidots \nabla \ddn \o \ddn \right ) }
\\
&+\inttet{\left ( - \di \left ( \fn d \cdot \f \Theta \dreidots \nabla \fn d \o \fn d \right )+ \nabla \fn d : \f \Theta \dreidots \nabla \fn d \o \fn d, \t \ddn \right ) }
\\& -c  \inttet{( (1 + \| \ddn \|_{L^\infty(\f W^{1,3})})\| \t \ddn(s)\|_{L^\infty}+  \| \nabla \t \ddn(s)\|_{\f L^3}) \mathcal{E}(\fn v ,\fn d, 0 | \vvn , \ddn,0 )}\,,
\numberthis\label{intddn}
\end{align*}
\begin{align*}
&\left \| ( \fn d(t) - \ddn(t) ) \cdot \f \Theta \dreidots \nabla \ddn(t) \o \ddn(t) \right \|_{\Le}^2- \left \| ( \fn d(0) - \ddn(0) ) \cdot \f \Theta \dreidots \nabla \ddn(0) \o \ddn(0) \right \|_{\Le}^2 \\
& \leq  2 \int_0^t \left (    \partial_s \fn d - \partial_s \ddn, \left (   (  \fn d -  \ddn ) \cdot \f \Theta \dreidots \nabla \ddn \o \ddn  \right ) : \f \Theta \dreidots \nabla \ddn \o \ddn  \right ) \de s  + \int_0^t \mathcal{K} \mathcal{E}( \fn v , \fn d , 0 | \vvn , \ddn, 0) \de s \,,
\numberthis\label{intd1n}
\end{align*}
 and
\begin{align*}
& \chi_{\|}  \left (\fn d(t) \cdot \f H , \ddn (t)\cdot \HH \right ) + \chi_{\bot} \left ( \fn d(t) \times \f H , \ddn (t)\times \HH \right )
 -\chi_{\|}  \left (\fn d(0) \cdot \f H , \ddn (0)\cdot \HH \right ) - \chi_{\bot} \left ( \fn d(0) \times \f H , \ddn (0)\times \HH \right )  \\
&  \geq \int_0^t \left [  \left ( \t \fn d , \chi_{\|}  \HH (\ddn \cdot \HH) - \chi_{\bot} \HH \times ( \HH \times \ddn ) \right)  + \left ( \t \ddn ,\chi_{\|}  \f H ( \fn d \cdot \f H ) - \chi_{\bot}  \f H \times ( \f H \times \fn d)   \right )   \right ]\de s \\
&  \quad - \int_0^t   \|  \t \ddn\|_{\f L^\infty} \mathcal{E}(\fn v ,\fn d , \f H | \vvn , \ddn, \HH  )   \de s \\
&\quad + \int_0^t   \left [ \left ( \t \fn d -\t \ddn  , \chi_{\|} ( \f H -  \HH )(\ddn \cdot \HH) - \chi_{\bot}( \f H -  \HH)  \times ( \HH \times \ddn ) \right)  -  \|  \t \ddn\|_{\f L^\infty} \| \HH - \f H\|_{\Le} ^2\right ]   \de s \,,
\numberthis\label{intH}
\end{align*}
for all $t\in [0,T]$.
\end{subequations}
\end{corollary}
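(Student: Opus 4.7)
The plan is to dispose of each of the five identities/inequalities separately, grouping them by difficulty. The two equalities \eqref{intvn} and \eqref{intd2n} are essentially product-rule computations in Bochner spaces, the two inequalities \eqref{intddn} and \eqref{intd1n} require a careful handling of the nonlinear sixth-order contraction, and \eqref{intH} is an intermediate bilinear computation.

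For \eqref{intvn}, I would observe that $\fn v\in\AC([0,T];W_n)$ with $W_n$ finite-dimensional and that $\vvn\in W^{1,2}(0,T;\Vd)$ with $W_n\hookrightarrow\Hc\cap\V$, so that the pairing $(\fn v,\vvn)$ is an absolutely continuous scalar function whose derivative is given by the standard Bochner product rule. For \eqref{intd2n}, I would apply the product rule to the symmetric bilinear form $(\nabla\fn d;\f\Lambda:\nabla\ddn)$ in time, then move $\nabla$ to the other factor via integration by parts in space. The boundary contributions vanish because $\tr(\fn d)=\tr(\ddn)=\f d_1$ is time-independent, so $\t\fn d$ and $\t\ddn$ have zero trace; strong ellipticity (see~\eqref{kill}) ensures the Laplacian $\Lap$ is the right adjoint operator. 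Neither identity relies on any deep structural fact beyond standard chain-rule considerations.

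The main obstacle is \eqref{intddn}. The expression $\nabla\fn d\o\fn d\dreidotkom\f\Theta\dreidots\nabla\ddn\o\ddn$ is bilinear in the pair $(\fn d,\ddn)$ only if one treats $\nabla\fn d\o\fn d$ as a single variable, but differentiating it in time by the product rule produces the asymmetric combination $\nabla\t\fn d\o\fn d+\nabla\fn d\o\t\fn d$, which cannot be rewritten as a time derivative of a symmetric quantity. The strategy is to insert and subtract mixed terms so that the dominant contributions are of the form $(\t\fn d,\f q_\Theta(\ddn))$ and $(\f q_\Theta(\fn d),\t\ddn)$, with $\f q_\Theta$ denoting the $\f\Theta$-part of the variational derivative from~\eqref{qdef}. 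Concretely, I would use the identity $\fn d\o\fn d-\ddn\o\ddn=(\fn d-\ddn)\o\fn d+\ddn\o(\fn d-\ddn)$ together with the Kronecker symmetry structure of $\f\Theta$ (see~\eqref{ThetaOF}) to symmetrize the cross terms, then integrate by parts in space on the leading pieces (boundary terms vanishing as above). The residual commutator involves factors of $\fn d-\ddn$ and $\nabla\fn d-\nabla\ddn$ multiplied by lower-order norms of $\ddn$ and $\t\ddn$; a Hölder/Sobolev estimate bounds these by a constant depending on $1+\|\ddn\|_{L^\infty(\f W^{1,3})}$, times $\|\t\ddn\|_{\f L^\infty}+\|\nabla\t\ddn\|_{\f L^3}$, times the relative energy $\mathcal{E}(\fn v,\fn d,0|\vvn,\ddn,0)$, producing precisely the stated remainder.

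For \eqref{intd1n}, I would expand the square, apply the product rule in time, and recognize that the leading terms coincide with the asserted pairing $(\t\fn d-\t\ddn,((\fn d-\ddn)\cdot\f\Theta\dreidots\nabla\ddn\o\ddn):\f\Theta\dreidots\nabla\ddn\o\ddn)$; the remaining terms, arising from differentiating the $\ddn$-factors inside the expression, are bounded by $\mathcal{K}\mathcal{E}$ via the same tensor-contraction estimates used for \eqref{intddn}. Finally, \eqref{intH} is essentially bilinear in $(\fn d,\ddn)$: applying the chain rule to $\chi_\|(\fn d\cdot\f H,\ddn\cdot\HH)+\chi_\bot(\fn d\times\f H,\ddn\times\HH)$ gives the two symmetric pairings, after which one adds and subtracts the mismatch $(\f H-\HH)$ to produce the $\f a_n$-type contribution and the associated $\|\f H-\HH\|_{\Le}^2$ error. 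Throughout, the simplification relative to~\cite[Proposition~5.4]{diss} is that no measure-valued relaxation is needed, since $\fn d\in Z_n\subset\Hb\cap\f L^\infty$ and $\ddn$ satisfies the regularity~\eqref{regtest} outright.
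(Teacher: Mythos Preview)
Your proposal is correct and follows essentially the same route as the paper. The paper treats \eqref{intvn}, \eqref{intd2n}, and \eqref{intddn} as standard (referring to~\cite[Proposition~5.1]{weakstrong} and~\cite[Proposition~5.4]{diss} for the harder tensor computation, just as you outline), and for \eqref{intd1n} and \eqref{intH} it carries out exactly the fundamental-theorem-plus-chain-rule expansion you describe, followed by the same H\"older estimate on the residual terms involving $\partial_s(\nabla\ddn\o\ddn)$ and the $(\HH-\f H)$ mismatch.
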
 
\begin{proof}

The Integration by parts formulas~\eqref{intvn},~\eqref{intd2n} and~\eqref{intddn} are rather standard and simpler than the ones in~\cite[Proposition~5.1]{weakstrong} and~\cite[Proposition~5.3]{diss}. Note that the regularity in space is no problem anymore due to the discretization. For a proof in the continuous case, involving measures, we refer to~\cite{lasarzik}.  

The integration-by-parts formula~\eqref{intd1n} and~\eqref{intH} can be proven in a similar fashion. 
We start with formula~\eqref{intd1n}. 
The fundamental theorem of calculus  grants that 
\begin{align*}
&\left \| ( \fn d(t) - \ddn(t) ) \cdot \f \Theta \dreidots \nabla \ddn(t) \o \ddn(t) \right \|_{\Le}^2- \left \| ( \fn d(0) - \ddn(0) ) \cdot \f \Theta \dreidots \nabla \ddn(0) \o \ddn(0) \right \|_{\Le}^2 
\\
&= \int_0^t \partial_s \left  | ( \fn d(s) - \ddn(s) ) \cdot \f \Theta \dreidots \nabla \ddn(s) \o \ddn(s) \right |^2 \de s 
\,.
\end{align*}
The chain rule implies that 
\begin{align*}
\partial_s& \left  | ( \fn d(s) - \ddn(s) ) \cdot \f \Theta \dreidots \nabla \ddn(s) \o \ddn(s) \right |^2\\&={} 2 \left (   ( \partial_s \fn d(s) - \partial_s \ddn(s) ) \cdot \f \Theta \dreidots \nabla \ddn(s) \o \ddn(s)  \right ) :\left (   (  \fn d(s) -  \ddn(s) ) \cdot \f \Theta \dreidots \nabla \ddn(s) \o \ddn(s)  \right )  \\ 
&\quad + 2 \left (   (  \fn d(s) -  \ddn(s) ) \cdot \f \Theta \dreidots \partial_s( \nabla \ddn(s) \o \ddn(s))   \right ) :\left (   (  \fn d(s) -  \ddn(s) ) \cdot \f \Theta \dreidots \nabla \ddn(s) \o \ddn(s)  \right )  \,,
\end{align*}
where the second line can be estimated by 
\begin{align*}
 & \left (   (  \fn d(s) -  \ddn(s) ) \cdot \f \Theta \dreidots \partial_s( \nabla \ddn(s) \o \ddn(s))   \right ) :\left (   (  \fn d(s) -  \ddn(s) ) \cdot \f \Theta \dreidots \nabla \ddn(s) \o \ddn(s)  \right )  \\
 &\leq \|  \fn d(s) -  \ddn(s) \|_{\f L^6}^2 | \f \Theta |^2 \left ( \| \partial_s \nabla \ddn(s) \|_{\f L^3 } \| \ddn(s) \|_{\f L^\infty}+ \|  \nabla \ddn(s) \|_{\f L^3 } \|\partial_s  \ddn(s) \|_{\f L^\infty}\right )   \|  \nabla \ddn(s) \|_{\f L^3 } \| \ddn(s) \|_{\f L^\infty}\,,
\end{align*}
which proves the assertion. 

With respect to the integration-by-parts formula~\eqref{intH}, the fundamental theorem of calculus  grants that 
\begin{align*}
& \chi_{\|}  \left (\fn d(t) \cdot \f H , \ddn (t)\cdot \HH \right ) + \chi_{\bot} \left ( \fn d(t) \times \f H , \ddn (t)\times \HH \right ) -\chi_{\|}  \left (\fn d(0) \cdot \f H , \ddn (0)\cdot \HH \right ) - \chi_{\bot} \left ( \fn d(0) \times \f H , \dd (0)\times \HH \right ) \\
&  = \int_0^t \left [  \left ( \t \fn d , \chi_{\|}  \f H (\ddn \cdot \HH ) - \chi_{\bot} \f H \times ( \HH \times \ddn ) \right)  + \left ( \t \ddn ,\chi_{\|}  \HH ( \fn d \cdot \f H ) - \chi_{\bot}\HH \times ( \f H \times \fn d)   \right )    \right ]\de s \,.
\end{align*}
From a rearrangement, we may infer
\begin{align*}
& \chi_{\|}  \left (\fn d(t) \cdot \f H , \ddn (t)\cdot \HH \right ) + \chi_{\bot} \left ( \fn d(t) \times \f H , \ddn (t)\times \HH \right ) -\chi_{\|}  \left (\fn d(0) \cdot \f H , \ddn (0)\cdot \HH \right ) - \chi_{\bot} \left ( \fn d(0) \times \f H , \ddn (0)\times \HH \right ) \\
&  = \int_0^t \left [  \left ( \t \fn d , \chi_{\|}  \HH (\ddn \cdot \HH) - \chi_{\bot} \HH \times ( \HH \times \ddn ) \right)  + \left ( \t \ddn ,\chi_{\|}  \f H ( \fn d \cdot \f H ) - \chi_{\bot}  \f H \times ( \f H \times \fn d)   \right )    \right ]\de s \\
&\quad + \int_0^t   \left ( \t \fn d -\t \ddn  , \chi_{\|} ( \f H -  \HH )(\ddn \cdot \HH) - \chi_{\bot}( \f H -  \HH)  \times ( \HH \times \ddn ) \right)  \de s  \\
&\quad + \int_0^t  \left ( \t \ddn, \chi_{\|} (\HH - \f H)(\fn d \cdot \f H- \ddn \cdot \HH )   - \chi_{\bot}(\HH - \f H)\times (\f H \times \fn d- \HH \times \ddn)  \right )   \de s\,.
\end{align*}
Estimating the last line 
\begin{align*}
 &  \int_0^t  \left ( \t \ddn, -\chi_{\|} (\HH - \f H)(\fn d \cdot \f H- \ddn \cdot \HH )   - \chi_{\bot}(\HH - \f H)\times (\f H \times \fn d- \HH \times \ddn)  \right )   \de s\\
 &\leq  \int_0^t  \|  \t \ddn\|_{\f L^\infty} \left  (-\chi_{\|} \| \fn d \cdot \f H- \ddn \cdot \HH  \|_{\Le}^2  - \chi_{\bot} \| \f H \times \fn d- \HH \times \ddn \|^2_{\Le}   + c \| \HH - \f H\|_{\Le} ^2 \right )   \de s 
\end{align*}
proves the assertion.
\end{proof}

We are now ready to prove Theorem~\ref{thm:dis}.

\begin{proof}[Proof of Theorem~\ref{thm:dis}]
We split the proof in several steps. 
In the \textbf{first step}, we find
similar to~\cite[Corollary~5.1]{weakstrong} that
\begin{align*}
&\left ((\nabla \fn d (t)-\nabla \ddn(t)  )\o (\fn d(t) -  \ddn(t) ) \dreidotkom \f \Theta \dreidots \nabla \ddn(t)  \o \ddn(t) \right ) \\&\leq \frac{k}{2}\left \| \nabla \fn d(t) - \nabla \ddn(t) \right \|_{\Le}^2 + \frac{1}{2k}  \| (\fn d( t) - \ddn(t))\cdot \f \Theta \dreidots \nabla \ddn (t) \o \ddn (t)  \|_{\Le}^2 \\&\leq \frac{1}{4}\left ( \nabla \fn d(t) - \nabla \ddn(t) ; \f \Lambda :( \nabla \fn d(t) - \nabla \ddn(t))\right ) + \frac{1}{2k}  \| (\fn d( t) - \ddn(t))\cdot \f \Theta \dreidots \nabla \ddn (t) \o \ddn (t)  \|_{\Le}^2 \,.
\end{align*}
Here $k$ is the coercivity constant for the strongly elliptic tensor $\f\Lambda$ (see~\eqref{kill}). 
Additionally, we may infer from the integration-by-parts formulae~\eqref{intdn}  that
\begin{align*}
- &\left [\left ( \nabla \fn d   {}   ; \f \Lambda : \nabla \ddn  {}  \right ) + \left (  \nabla \fn d  {}  \o \fn d  {}   \dreidotkom \f \Theta \dreidots \nabla \ddn  {}   \o \ddn   {}    \right )  -  \chi_{\|}  \left (\f d  {}   \cdot \f H , \dd   {}  \cdot \HH \right ) - \chi_{\bot} \left ( \f d  {}   \times \f H , \dd   {}  \times \HH \right )\right ] \Big |_0^t 
\\
\leq {}& -  \int_0^t   \left [ (  \t \fn d  ,  \tq_n) + ( \fn q ,  \t \ddn)  \right ]    \de s 
+\left ((\nabla \f d(0)-\nabla \dd (0))\o (\f d(0)-  \dd(0)) \dreidotkom \f \Theta \dreidots \nabla \dd (0)\o \dd(0)\right )\\
&+ \frac{1}{k} \left \| \left (  ( \fn d(0)- \ddn(0))\cdot \f \Theta \dreidots \nabla \ddn \o \ddn \right ): \f \Theta \dreidots \nabla \ddn \o \ddn     \right \|_{\Le}^2
+  \int_0^t \mathcal{K}\mathcal{E} 	(\fn v ,\fn d ,\f H  | \vvn , \ddn , \HH ) \de s \\
& + \int_0^t    \left [\left ( \t \f d -\t \dd  ,\f a_{n} \right)+   c \|  \t \dd\|_{\f L^\infty}  \| \HH - \f H\|_{\Le} ^2 +\langle (R_n-I) \t \dd ,  \f q( \fn d)\rangle  \right ] \de s 
  + \frac{1}{2}  \mathcal{E}(\fn v ,\fn d, 0| \vvn , \ddn, 0   )(t) \,.
\numberthis\label{auscorn}
\end{align*}
holds for all $t\in[0,T]$. 
We used the abbreviation~\eqref{tddqn}. 
Note that in the definition of $\fn q$, the projection $R_n$ appears. 
Therefore, the term $( \fn q ,\t \dd)$ is simultaneously added and subtracted. This leads to the second term appearing on the right-hand side of~\eqref{auscorn} and the second to the last term on the right-hand side of~\eqref{auscorn} incorporating the difference of the projection and the identity, \textit{i.e.}, $(R_n-I)$. 
This projection may be inserted in the case of $\tq_n$ since $\t \fn d $ is an element of the appropriate subspace. 

In the \textbf{second step}, we recall the shifted energy inequality for the test functions~$(\vvn, \ddn)$ (see~\cite{diss}). 
We add and simultaneously subtract the equations~\eqref{vdis} and equation~\eqref{ddis} evaluated at $( \vvn ,\ddn)$  tested with $\vvn$  and  $\tq_n$, respectively. This leads to
 \begin{align*}
 &\frac{1}{2}\|\vvn (t)\|_{\Le}^2 + \mathcal{F}(\ddn(t))   + \inttet{\left [(\mu_1+\lambda^2)\|\ddn\cdot \syvn\ddn\|_{L^2}^2 +
  \mu_4 \|\syvn\|_{\Le}^2 \right ]}  \\
& +\inttet{\left [( \mu_5+\mu_6-\lambda^2)\|\syvn\ddn\|_{\Le}^2  +  \|\ddn \times  \tq_n\|_{\Le}^2 - \langle(I-R_n)\t \dd , \f q( \dd)\rangle   \right ]}
\\
& \qquad =  \left ( \frac{1}{2}\|\vvn(0) \|_{\Le}^2 + \F( \ddn(0))\right )
 + \inttet{\left [\langle \f g(s) , \vvn(s) \rangle + \left (\tilde{\mathcal{A}}(\vvn,\ddn), \begin{pmatrix}
 \vvn (s)\\   \tq_n(s)
 \end{pmatrix}\right )
 \right ]}\, .
\end{align*}
Note that we have to add and subtract the variational derivative of $\tq$ to be able to use the chain rule to get $\mathcal{F}(\tilde{\f d(t)})$ on the left-hand side. This leads to the last term on the left-hand side. 
Here $\tilde{\mathcal{A}}_n$ is given by
\begin{align}
\tilde{\mathcal{A}}_n(\vvn,\ddn) = \begin{pmatrix}
\t \vvn + ( \vvn \cdot \nabla )\vvn - \nabla \ddn^T ( | \ddn |^2 - \ddn \o \ddn ) \tq_n - \di \tilde{\f T}^L_n- \f g \\
\t \ddn +  \left ( | \ddn|^2 I - \ddn \o \ddn\right ) \left ( ( \vvn \cdot \nabla ) \ddn - \skvn \ddn + \lambda \syvn \ddn + \tq _n \right )
\end{pmatrix}\,.\label{ansch}
\end{align}
With the identity 
\begin{align}
\t \ddn = ( 1 - |\ddn|^2 ) \t \ddn + \frac{ 1}{2}\t | \ddn|^2 \ddn - \ddn \times\ddn \times \t \ddn \,\label{tdumform},
\end{align}
we find
 \begin{align*}
 &\frac{1}{2}\|\vvn (t)\|_{\Le}^2 + \mathcal{F}(\ddn(t))   + \inttet{\left [(\mu_1+\lambda^2)\|\ddn\cdot \syvn\ddn\|_{L^2}^2 +
  \mu_4 \|\syvn\|_{\Le}^2 \right ]}  \\
& +\inttet{\left [( \mu_5+\mu_6-\lambda^2)\|\syvn\ddn\|_{\Le}^2  +  \|\ddn \times  \tq_n\|_{\Le}^2\right ]}
\\
& \qquad =  \left ( \frac{1}{2}\|\vvn(0) \|_{\Le}^2 + \F( \ddn(0))\right )
 + \inttet{\left [\langle \f g , \vvn \rangle + \left (\mathcal{A}_n(\vvn,\ddn), \begin{pmatrix}
 \vvn \\   \ddn \times \tq_n
 \end{pmatrix}\right )
 \right ]}
\\
&\qquad + \inttett{\left (( 1 - |\ddn|^2 ) \t \ddn + \frac{ 1}{2}\t | \ddn|^2 \ddn, \tq_n\right )+ \langle(I-R_n)\t \dd , \f q( \dd)\rangle   } 
 \, .
\numberthis\label{shiftenergyn}
\end{align*}
Here $\mathcal{A}_n$ is given by~\eqref{An}.

In the same way by adding and subtracting equation~\eqref{vdis} evaluated at $\vvn$ and tested with $\fn v$ and equation~\eqref{ddis} evaluated at $\ddn$ and tested with $ \fn q$, we obtain that
\begin{align*}
-{}& \inttett{\left ( \t \vvn ,\fn v \right ) + \left (  \t \ddn ,  \fn q\right ) + \mu_4 \left ( \syvn, \syn v \right ) + \left (\ddn \times \tq_n , \ddn  \times \fn q \right ) }\\
={}& \inttett{\left ( ( \vvn \cdot \nabla ) \vvn , \fn v  \right )+ (\mu_1+\lambda^2) \left (\ddn \cdot \syvn \ddn , \ddn \cdot \syn v \ddn \right ) - \langle \f g , \fn v \rangle }
\\
+{}& \inttett{
 ( \mu_5+ \mu_6-\lambda^2) \left ( \syvn \ddn , \syn v \ddn \right ) -\lambda \left (\ddn\times \tq_n ,\dd \times  \syn v \ddn \right )}\\
&-\inttett{
 \left (\ddn \times \tq_n , \ddn \times \skn v \ddn\right )+\left ( \ddn \times \left (( \vvn \cdot \nabla) \ddn -\skvn \ddn\right ) ,  \ddn \times  \fn q \right )
}\\
&- \inttett{  \left ( \lambda {\ddn}\times  \syvn \ddn,  \ddn \times   \fn q \right ) + \left (\nabla \ddn ^T \left (| \ddn |^2 I -\ddn \o \ddn \right )\tq_n,  \fn v \right )}\\
& + \inttett{\left (( 1 - |\ddn|^2 ) \t \ddn + \frac{ 1}{2}\t | \ddn|^2 \ddn, \fn q \right )-\left (\mathcal{A}_n(\vvn,\ddn), \begin{pmatrix}
\fn v \\   \ddn \times \fn q 
\end{pmatrix} \right ) } 
\,.
\numberthis\label{strongeqn}
\end{align*}
Note that equation~\eqref{strongeqn} is valid for all test functions $(\fn v , \fn q)$ since basically it is just zero. 
Again, we employ~\eqref{tdumform} to replace~$\tilde{\mathcal{A}}_n$ by the last line of~\eqref{strongeqn}.

In the \textbf{third step}, we want to derive a similar equation for the solutions $(\fn v, \fn d)$ of the approximate system~\eqref{eq:dis}. 
One is tempted to test equation~\eqref{vdis} with $\vvn$ and equation~\eqref{ddis} with~$\tq_n$. But the equation~\eqref{vdis} does not hold for this test function. Therefore, we test the equations with the associated projected value, \textit{i.e.}, equation~\eqref{vdis} tested with $P_n \vvn$ and add and subtract the equation~\eqref{vdis} tested with~$\vv$ simultaneously. 
 This leads to
\begin{align*}
-{}& \inttett{\left ( \t \fn v ,\vvn \right ) + \left (  \t \fn d  ,  \tq_n\right ) + \mu_4 \left ( \syn v, \syvn \right ) + \left (\fn d \times  \fn q  , \fn d  \times  \tq_n \right ) }\notag\\
={}& \inttett{\left ( ( \fn v \cdot \nabla ) \fn v , \vvn  \right )+ (\mu_1+\lambda^2) \left (\fn d \cdot \syn v \ddn , \fn d \cdot \syvn  \fn d \right )- \langle \f g , \fn v \rangle  }
\\
+{}& \inttett{
 ( \mu_5+ \mu_6-\lambda^2) \left ( \syn v \fn d , \syvn \fn d \right ) -\lambda \left (\fn d\times  \fn q ,\fn d \times  \syvn \fn d \right )}\\
&-\inttett{
 \left (\fn d \times  \fn q  , \fn d \times \skvn \fn d\right )+\left ( \fn  d\times \left (( \fn v   \cdot \nabla) \fn d -\skn v \fn d \right ) ,  \fn d  \times   \tq_n \right )
}\\
&- \inttett{  \left (  \lambda {\fn  d }\times  \syn v \fn d ,  {\fn d}\times      \tq_n \right ) + \left (\nabla \fn d  ^T \left (| \fn d |^2 I -\fn d   \o \fn d  \right ) \fn q,  \vvn \right )}\\
&+ \int_0^t \left ( \mathcal{A}_n ( \fn v ,\fn d) , \begin{pmatrix}
P_n \vvn -\vvn \\ 0 
\end{pmatrix}\right ) \de s 
\,.
\numberthis\label{weakeqn}
\end{align*}

In the \textbf{fourth step}, 
the results of the previous steps are put together to get the relative energy inequality. 
%
Calculating the relative energy~$\mathcal{E} $ and the relative dissipation $\mathcal{W}$, inserting the energy equality of Proposition~\ref{prop:enrgydis} and the shifted energy equality~\eqref{shiftenergyn} yields
\begin{align*}
\mathcal{E}&(\fn v ,\fn d , \f H | \vvn , \ddn, \HH  )(t) + \int_0^t\mathcal{W}(\fn v, \fn d | \vvn,\ddn ) \de s \leq{} 
\mathcal{D}_0(\fn v ,\fn d, \f H  | \vvn , \ddn , \HH  )(0)
 \\& - 2\int_0^t  \left [  (\mu_1 +\lambda^2 )( \fn d \cdot \syn v \fn d , \ddn\cdot \syvn  \ddn )+( \mu_5 + \mu_6 - \lambda ^2)( \syn v \fn d , \syvn \ddn ) \right ]    \de s  
  \\& 
  - 2 \int _0^t \left [ \mu_4  ( \syn v ; \syvn ) +( \fn d\times \fn q , \ddn \times  \tq_n )\right ] \de s  
 + \int_0^t \langle \f g , \f v + \vvn \rangle \de s \\& 
  -  \int_0^t\left [( \fn v  , \t \vvn  ) + ( \t \fn v  , \vvn) \right ]\de s -  \int_0^t   \left [ ( \t \fn d  ,   \tq_n) + (  \fn q ,
\t \ddn)  \right ]    \de s+ \frac{1}{2} \mathcal{E}(\fn v ,\fn d , 0| \vvn , \ddn, 0  )(t) 
\\&
+ \inttett{\mathcal{K}  \mathcal{E}(\fn v ,\fn d , \f H  | \vvn , \ddn , \HH ) 
+ \left (\mathcal{A}_n(\vvn,\ddn), \begin{pmatrix}
 \vvn \\ 
  \ddn \times \tq_n
 \end{pmatrix}\right )+ \langle(I-R_n)\t \dd , \f q(\dd) - \f q( \fn d)\rangle   }
\\
& + \int_0^t   \left [ \left ( \t \f d -\t \dd  ,\f a_{n} \right)  - c \|  \t \dd\|_{\f L^\infty}  \| \HH - \f H\|_{\Le} ^2+\left (( 1 - |\ddn|^2 ) \t \ddn + \frac{ 1}{2}\t | \ddn|^2 \ddn, \tq_n\right )\right ]   \de s  
 \, ,
\numberthis\label{vormeinsn}
\end{align*}
where $\mathcal{D}_0$ is given in~\eqref{D0}. Note also the definition~\eqref{tddqn}. 
Inserting now equation~\eqref{strongeqn} and~\eqref{weakeqn},
 respectively,  yields
\begin{align*}
\frac{1}{2}\mathcal{E}&(\fn v ,\fn d , \f  H | \vvn , \ddn , \HH )(t) + \int_0^t\mathcal{W}(\fn v, \fn d | \vvn,\ddn )  \de s 
\\
\leq{}&\mathcal{D}_0(\fn v ,\fn d, \f H  | \vvn , \ddn , \HH  )(0)
  +  \int_0^t\mathcal{K} \mathcal{E}(\fn v ,\fn d, \f H  | \vvn , \ddn, \HH )\de s + \int_0^t \left [( ( \vvn \cdot \nabla ) \vvn , \fn v ) + ( ( \fn v \cdot\nabla ) \fn v , \vvn ) \right ]\de s \\
& + (\mu_1 +\lambda^2)\int_0^t    \left ( \ddn  \cdot \syvn \ddn  , \syn v : \left ( \ddn  \o \ddn  - \fn d \o \fn d   \right )   \right )  \de s \\
&+ (\mu_1 +\lambda^2)\int_0^t    \left ( \fn d  \cdot \syn v \fn d  , \syvn : \left ( \fn d  \o \fn d  - \ddn  \o \ddn    \right )   \right )  \de s \\
& + ( \mu_5 + \mu_6  -\lambda^2) \int_0^t  \left [\left ( \syvn \ddn  , \syn v ( \ddn  - \fn d )\right  )  + \left (    \syn v \fn d , \syvn ( \fn d - \ddn  ) \right ) \right ]   \de s \\
&
-\inttett{
2 \left ( \fn d \times  \fn q, \ddn \times \tq_n  \right ) - \left (  \ddn \times \tq_n, \ddn \times \fn q\right ) - \left ( \fn d \times    \fn q , \fn d \times  \tq_n\right )
}
\\
& -\inttet{\left [ \left (  \ddn \times \skvn \ddn , \ddn \times   \fn q \right )- \left (  \fn d \times \skvn \fn  d,\fn d \times   \fn q \right )   \right ]}\\
&-\inttet{\left [  \left (  \fn d \times \skn v \fn d , \fn d \times   \tq_n \right )-\left (\ddn \times  \skn v \ddn   , \ddn \times \tq_n\right )  \right ]}\\
& -  \lambda \int_0^t \left [\left ( \ddn \times \syn v \ddn  , \ddn  \times \tq_n \right )+ \left ( \fn d \times \syvn \fn d  , \fn d \times  \fn q \right )\right ]  \de s
\\
& +  \lambda \int_0^t \left [\left ( \ddn \times \syvn \ddn  , \ddn  \times \fn q  \right )+ \left ( \fn d \times \syn v \fn d  , \fn d \times   \tq_n \right )\right ]  \de s
 \\
&+ \int_0^t \left [(\ddn \times ( \vvn \cdot \nabla ) \ddn  , \ddn \times  \fn q) + (\fn d \times ( \fn v \cdot \nabla )\fn d , \fn d \times   \tilde{\f q}_n)
\right ]\de s  
 \\
&- \int_0^t \left [
 ( \nabla \ddn ^T(| \ddn |^2 I -\ddn \o \ddn) \tq_n ,  \fn v )  +  ( \nabla \fn d ^T(|\fn d|^2 I - \fn d \o \fn d)   \fn q ,  \vvn )
\right ]\de s  
\\
& +  \inttet{  \left [ \left ( \t \f d -\t \dd  , \f a_{n} \right)    +\left (( 1 - |\ddn|^2 ) \t \ddn + \frac{ 1}{2}\t | \ddn|^2 \ddn, \tq_n-\fn q \right )+ \langle(I-R_n)\t \dd , \f q(\dd) - \f q( \fn d)\rangle \right ]  }  
\\
& 
 + \inttet{ \left [\left (\mathcal{A}_n (\vv,\dd  ), \begin{pmatrix}
 \vvn- \fn v  \\ \ddn\times (\tq - \fn q)  
 \end{pmatrix}\right )- c  \|  \t \dd\|_{\f L^\infty}  \| \HH - \f H\|_{\Le} ^2+ \left (\mathcal{A}_n(\fn v ,\fn d) , \begin{pmatrix}
 P_n \vvn-\vvn\\0
 \end{pmatrix}\right )\right ]}
\\
    ={}& \sum_{i=1}^3 I_i+ (\mu_1+\lambda^2) (I_4+I_5) + ( \mu_5+\mu_6-\lambda^2) I_6-\sum_{1=7}^9 I_i - \lambda I_{10} +\lambda I_{11} +I_{12}- I_{13} + \sum_{i=14}^{15} I_i \,.
\numberthis \label{zurueck}
\end{align*}
The remaining part of the proof consists of appropriately estimating the right hand side of the foregoing inequality similar to the proof of~\cite[Corollary~6.1]{weakstrong}.  

In \textbf{step five}, the different dissipative terms, \textit{i.e.},the terms $I_3$--$I_{13}$, are estimated.
The terms $I_3$--$I_6$ are already estimated in~\cite{weakstrong}, this implies that
\begin{align*}
I_3+ (\mu_1+\lambda^2)(I_4+I_5) + ( \mu_5+\mu_6-\lambda^2) I_6  \leq  \delta \inttet{\mathcal{W}( \fn v, \fn d | \vvn , \ddn) } + \inttet{\mathcal{K}\mathcal{E}(\fn v ,\fn d ,0| \vvn , \ddn ,0 )}\,.
\end{align*}

The terms $I_7$--$I_{13}$ are estimated similar. We exemplify the calculations for the term $I_{12}-I_{13}$. Using the properties of the cross product (see Section~\ref{sec:not}),  we observe for the term $I_{12}-I_{13}$ that  
\begin{align*}
I_{12}-I_{13} ={}& \int_0^t \left [(\ddn \times ( \vvn \cdot \nabla ) \ddn  , \ddn \times  (\fn q- \tq_n)) + (\fn d \times ( (\fn v- \vvn) \cdot \nabla )\fn d , \fn d \times   \tilde{\f q}_n)
\right ]\de s  
 \\
&+ \int_0^t \left [
 ( \ddn \times ((\vvn - \fn v ) \cdot \nabla )\ddn , \ddn \times \tq_n  )  -(\fn d \times (\vvn \cdot \nabla) \fn d ,\fn d\times  (  \fn q- \tq_n)   )
\right ]\de s  
\\
 =&\int_0^t(\ddn \times ( \vvn \cdot \nabla ) \ddn  , (\ddn-\fn d)  \times  (\fn q- \tq_n)) \de s  
+ \int_0^t (\ddn  \times ( (\fn v- \vvn) \cdot \nabla ) \ddn  , (\fn d- \ddn ) \times   \tilde{\f q}_n)
\de s  
 \\
 &+ \int_0^t \left [
   ((\fn  d- \ddn ) \times ( (\fn v- \vvn) \cdot \nabla ) \ddn  +  \fn d   \times ( (\fn v- \vvn) \cdot \nabla ) (\fn d-\ddn ), \fn d \times  \tilde{\f q}_n)
\right ]\de s
 \\
&+ \int_0^t \left [
 ((\ddn-\fn d) \times ( \vvn \cdot \nabla ) \ddn +\fn d  \times ( \vvn \cdot \nabla ) (\ddn-\fn d)  ,\fn d\times    \fn q-\ddn \times  \tq_n + (\ddn - \fn d )\times \tq_n)   )
\right ]\de s  
 \,.
\end{align*}
We keep the first term on the right-hand side of the second equality and estimate the remaining terms
\begin{align*}
I_{12}-I_{13} \leq {} &\int_0^t(\ddn \times ( \vvn \cdot \nabla ) \ddn  , (\ddn-\fn d)  \times  (\fn q- \tq_n)) \de s  
 \\ & + \delta \inttet{\| \fn v - \vvn \|_{\f L^6}^2 }+ C_{\delta} \| \ddn \|_{L^\infty(\f L^\infty)}^2\inttet{ \| \tq_n \|_{\f L^3}^2  \| \nabla \ddn \|_{\f L^3}^2\| \fn d -\ddn \|_{\f L^6}^2  }\\ \
& + C_{\delta} \| \fn d \|_{L^\infty(\f L^\infty)}^2\inttet{ \| \tq_n \|_{\f L^3}^2  \left (\| \nabla \ddn \|_{\f L^3}^2\| \fn d -\ddn \|_{\f L^6}^2 + \| \fn d \|_{L^\infty(\f L^\infty)}^2\| \nabla \fn d -\nabla \ddn \|_{\Le}^2  \right ) }\\
&+ \delta \inttet{\|\fn d \times \fn q -\ddn \times \tq_n\|_{\Le}^2 } 
\\
&+ C_\delta \inttet{\| \vvn \|_{\f L^\infty}^2\left ( \|\nabla  \ddn\|_{\f L^3}^2 \| \fn d - \ddn\|_{\f L^6}^2 + \|\fn d\|_{L^\infty(\f L^\infty)}^2 \| \nabla \fn d - \nabla \ddn\|_{\Le}^2 \right ) }\\
& + \inttet{\| \vvn \|_{\f L^\infty}\| \tq_n \|_{\f L^3} \left (  \|\nabla  \ddn\|_{\f L^3} \| \fn d - \ddn\|_{\f L^6}^2+ \| \fn d\|_{\f L^\infty} \| \nabla \fn d -\nabla \ddn \|_{\Le} \| \fn d - \ddn \|_{\f L^6} \right )}\,.
\end{align*}
Similar estimates for the terms $I_7$--$I_{11}$ let us conclude that
\begin{multline*}
-\sum_{1=7}^9 I_i - \lambda I_{10} +\lambda I_{11} +I_{12}- I_{13} \leq  \delta \inttet{\mathcal{W}(\fn v, \fn d | \vvn, \ddn )} + \inttet{\mathcal{K}\mathcal{E}(\fn v ,\fn d , \f H | \vvn , \ddn , \HH)}
\\
\int_0^t(\ddn \times \left (( \vvn \cdot \nabla ) \ddn - \skvn \ddn +\lambda \syvn \ddn + \tq_n\right )  , (\ddn-\fn d)  \times  (\fn q- \tq_n)) \de s   
 \,.
\end{multline*}
Adding and subtracting the term~$\inttet{\ddn \times \t \ddn , ( \ddn -\fn d ) \times (\fn q - \tq_n)} $ leads to 
\begin{multline*}
-\sum_{1=7}^9 I_i - \lambda I_{10} +\lambda I_{11} +I_{12}- I_{13} \leq \delta \inttet{\mathcal{W}(\fn v, \fn d | \vvn, \ddn )} + \inttet{\mathcal{K}\mathcal{E}(\fn v ,\fn d, \f H  | \vvn , \ddn, \HH  )}\\
- \int_0^t(\ddn \times \t \ddn   , (\ddn-\fn d)  \times  (\fn q- \tq_n)) \de s   
+   \inttet{\left (\mathcal{A}_n (\vv, \dd ),\begin{pmatrix}
0 \\  (\ddn-\fn d)  \times  (\fn q- \tq_n)
\end{pmatrix}\right ) }\,.
\end{multline*}

It remains to estimate the first term in the second line of the previous inequality.
It results in the \textbf{sixth step}, where the difference of the variational derivatives are estimated.
First we observe that the definition~\eqref{qn} of $\fn q$ and $\tq_n$  incorporates the projections $R_n$. 
Since $R_n$ is an $\Le$-projection, we find with the definition~\eqref{qdef} that
\begin{align*}
&\int_0^t(\ddn \times \t \ddn   , (\ddn-\fn d)  \times  (\fn q- \tq_n)) \de s    \\
&= - \int_0^t((\ddn-\fn d)  \times   \ddn \times \t \ddn   , \fn q- \tq_n) \de s   \\
&=  \int_0^t\left (R_n \left ((\ddn-\fn d)  \times   \ddn \times \t \ddn\right )    ,  \Lap ( \fn d -\ddn) \right ) \de s   \\ 
&\quad + \int_0^t\left (R_n \left ((\ddn-\fn d)  \times   \ddn \times \t \ddn\right )    ,  \di \left ( \fn d \cdot \f \Theta \dreidots \nabla \fn d \o \fn d - \ddn \cdot \dreidots \f \Theta \dreidots \nabla \ddn \o \ddn \right )  \right )\de s   \\
&\quad - \int_0^t\left (R_n \left ((\ddn-\fn d)  \times   \ddn \times \t \ddn\right )    , \nabla  \fn d : \f \Theta \dreidots \nabla \fn d \o \fn d -\nabla \ddn : \dreidots \f \Theta \dreidots \nabla \ddn \o \ddn \  \right )\de s   \\
&\quad + \int_0^t\left (R_n \left ((\ddn-\fn d)  \times   \ddn \times \t \ddn\right )    ,  \chi_{ \|} \left ( ( \fn d \cdot \f H ) \f H - ( \ddn \cdot \HH ) \HH \right ) - \chi_{\bot} \left ( \f H \times ( \f H \times \fn d ) - \HH \times ( \HH \times \ddn)  \right )  \right )\de s   \,.\numberthis \label{nummer}
\end{align*}
Rearranging the right-hand side leads to
   \begin{align*}
     \fn d \cdot \f \Theta \dreidots \nabla \fn d \o \fn d - \ddn \cdot \f \Theta 		\dreidots \nabla \ddn \o \ddn 
  & ={}   \fn d \cdot \f \Theta \dreidots \left (\nabla \fn d \o \fn d- \nabla \ddn \o \ddn\right )
  +(\fn d-\ddn) \cdot \f \Theta \dreidots \nabla \ddn \o \ddn
  \intertext{and }
   \nabla \fn d : \f \Theta \dreidots \nabla \fn d \o \fn d - \nabla \ddn : \f \Theta \dreidots \nabla \ddn \o \ddn 
   & ={} (\nabla \fn d-\nabla \ddn) : \f \Theta \dreidots \left (\nabla \fn d \o \fn d - \nabla \ddn \o \ddn\right ) \\&  \quad +\nabla \ddn : \f \Theta \dreidots \left (\nabla \fn d \o \fn d - \nabla \ddn \o \ddn\right ) +(\nabla \fn d-\nabla \ddn) : \f \Theta \dreidots \nabla \ddn \o \ddn\,
   \end{align*}
   as well as 
   \begin{align*}
   & \chi_{ \|} \left ( ( \fn d \cdot \f H ) \f H - ( \ddn \cdot \HH ) \HH \right ) - \chi_{\bot} \left ( \f H \times ( \f H \times \fn d ) - \HH \times ( \HH \times \ddn)  \right ) \\
    &\qquad= \chi_{ \|} \left ( (\fn d \cdot \f H - \ddn \cdot \HH ) \f H + ( \ddn \cdot \HH ) ( \f H - \HH ) \right ) - \chi _{ \bot} \left (\f H \times ( \f H \times \fn d - \HH \times \ddn ) + ( \f H - \HH ) \times ( \HH \times \ddn) \right ) \,. 
   \end{align*}
Note that the $\Le$-projection $R_n$ is stable in $\He$ and $\f L^\infty$. By a simple interpolation argument between $\f L^\infty$ and $\f L^2$, the stability of the $\Le$-projection in $\f L^6$ follows immediately.  
Performing an integration-by-parts on the terms in the first two lines on the right-hand side of~\eqref{nummer} and estimating yields
\begin{align*}
&\int_0^t(\ddn \times \t \ddn   , (\ddn-\fn d)  \times  (\fn q- \tq_n)) \de s    \\ 
& \leq   \inttet{ \|( \ddn - \fn d) \times (\ddn  \times  \t \ddn) \|_{\Hb } | \f \Lambda |  \| \nabla \fn d - \nabla \ddn\|_{\Le}}
\\
&\quad + \inttet{\| ( \ddn - \fn d) \times( \ddn \times \t \ddn) \|_{\Hb} \| \fn d \|_{\f L^\infty}  \left  \| \f\Theta \dreidots \left ( \nabla \fn d \o \fn d - \nabla \ddn \o \ddn \right ) \right \| _{\Le}  }\\
&\quad + \inttet{\| ( \ddn - \fn d) \times (\ddn \times \t \ddn) \|_{\Hb}  \| \fn d -\ddn\|_{\f L^6} | \f \Theta |  \| \nabla \ddn \|_{\f L^3}\| \ddn \|_{\f L^\infty}  }\\
&\quad+ \inttet{\| ( \ddn - \fn d) \times (\ddn \times \t \ddn) \|_{\f L^\infty } \| \nabla \fn d -\nabla \ddn \|_{\Le}\left  \| \f \Theta\dreidots \left ( \nabla\fn d\o \fn d -\nabla \ddn \o \ddn \right ) \right \| _{\Le} } \\
&\quad + \inttet{\| \ddn \times\t \ddn \|_{\f L^\infty}\| \fn d - \ddn \|_{\f L^6} \| \nabla \ddn \|_{\f L^3}   \left  \| \f \Theta\dreidots \left ( \nabla\fn d\o \fn d -\nabla \ddn \o \ddn \right ) \right \| _{\Le} }
\\&\quad + \inttet{\| \ddn \times\t \ddn \|_{\f L^\infty}\| \fn d - \ddn \|_{\f L^6}    \left  \|  \nabla\fn d -\nabla \ddn \right \| _{\Le} | \f\Theta| \| \nabla \ddn \|_{\f L^3} \| \dd \|_{ \f L^\infty}}
\\&
\quad + \inttet{\| \ddn \times\t \ddn \|_{\f L^\infty}  \| \fn d - \ddn \|_{\f L^6}  \| \f H \|_ { \f L^3} \left ( \chi_{\|} \| \fn d \cdot \f H - \dd \cdot \HH \|_{ \Le} +\chi_{\bot}\| \f H \times \fn d - \HH \times \ddn\|_{ \Le} \right )    }
\\&\quad + \inttet{\| \ddn \times\t \ddn \|_{\f L^\infty}  \| \fn d - \ddn \|_{\f L^6}  \| \f H - \HH \|_{ \f L^2} \left ( \chi_{\|} \| \dd \cdot \HH \|_{ \f L^3} + \chi_{\bot} \| \HH \times \dd\|_{ \f L^3} \right )  }
\,,
\end{align*}
from where we may conclude with the algebraic relation in~\cite[Proposition~A.1]{weakstrong} 
 that
\begin{align*}
&\int_0^t(\ddn \times \t \ddn   , (\ddn-\fn d)  \times  (\fn q- \tq_n)) \de s \leq \inttet{\mathcal{K} \left (\mathcal{E}(\fn v ,\fn d,\f H | \vv ,\dd ,\HH)+ \| \f H - \HH\|_{ \Le}^2\right )}\,.
\end{align*}

The \textbf{seventh step} consists of estimating the contribution due to the non-convex character of the considered potential. 
To rearrange the term $I_{14}$, we use equation~\eqref{ddis} tested with $R_n \f a_{n} $ and add as well as subtract equation~\eqref{ddis} tested with $\f a_{n}$. Additionally, we add and subtract the term $\inttet{(\mathcal{A}_n(\vvn,\ddn) , (0,\fn d \times \f a_{n})^T)}$, with $\f a_{n} $ given in~\eqref{anm}. 
This yields
\begin{align*}
&\inttet{\left (\t \ddn - \t \fn d , 
\f a_{n} \right ) }{}
\\
&=\inttet{\left (  \fn d \times \f a_{n} , \ddn \times (\vvn \cdot \nabla) \ddn - \fn d \times ( \fn v \cdot \nabla ) \fn d 
\right )}
-\inttet{\left (  \fn d \times \f a_{n} , \ddn \times\skvn \ddn - \fn d \times \skn v\fn d  
\right )}
\\
&\quad+ \inttet{\left (  \fn d \times \f a_{n} ,  \lambda \left ( \ddn \times \syvn \ddn -\fn d \times  \syn v \fn d \right ) + \ddn \times \tq_n - \fn d \times \fn q 
\right )}
\\
& \quad+ \inttett{ \left ( \mathcal{A}_n (\vvn,\ddn)  , \begin{pmatrix}
0\\ \fn d \times \f a_{n}
\end{pmatrix}\right ) +\left (\tilde{ \mathcal{A}}_n (\fn v,\fn d)  , \begin{pmatrix}
0\\ R_n\f a_{n}-\f a_{n}
\end{pmatrix}\right )} + \inttet{\left (  \t \ddn + \fn d \times  \ddn \times \t \ddn , \f a_{n}  \right ) }\,\numberthis \label{therest}
\end{align*}
where $\tilde{\mathcal{A}}_n$ is defined in~\eqref{ansch}.
The estimates for the terms in the first two lines on the right-hand side of~\eqref{therest} are similar. Therefore, we exemplify the estimates for the first term on the right-hand side.
We observe after some rearrangements
\begin{align*}
&\inttet{\left (  \fn d \times \f a_{n} , \ddn \times (\vvn \cdot \nabla) \ddn - \fn d \times ( \fn v \cdot \nabla ) \fn d 
\right )}\\
&={}\inttet{\left (  \fn d \times \f a_{n}  , (\ddn -\fn d ) \times (\vvn \cdot \nabla) \ddn +  \fn d  \times (\vvn  \cdot \nabla ) ( \ddn - \fn d)  
\right )}\\
&\quad + {}\inttett{\left (  \fn d \times \f a_{n}   ,  \fn d \times (( \vvn - \fn v) \cdot \nabla) (\fn d- \ddn)
\right )+\left (  \fn d \times \f a_{n}   ,  \fn d \times (( \vvn - \fn v) \cdot \nabla)  \ddn
\right )}
\end{align*}
that this term can be estimated by
\begin{align*}
&\inttet{\left (  \fn d \times \f a_{n}  , \ddn \times (\vvn \cdot \nabla) \ddn - \fn d \times ( \fn v \cdot \nabla ) \fn d 
\right )}\\
&\leq {}   \| \fn d   \|_{L^\infty (\f L^\infty)}\| \nabla \ddn\|_{L^\infty(\f L^3)}  \inttet{ \| \vvn \|_{\f L^\infty } \| \fn d- \ddn \|_{\f L^6} \| \f a_{n}\|_{\f L^2}}+  
\| \fn d \| _{L^\infty(\f L^\infty)}^2 \inttet{
\| \vvn \|_{\f L^\infty }  \| \f a_{n}\|_{\f L^2} \| \nabla \fn d -\nabla \ddn \|_{\Le}}
\\& \quad + \delta \inttet{\| \fn v -\vvn \|_{\f L^6}^2 } 
+  C_\delta \| \fn d   \|_{L^\infty (\f L^\infty)}^4 \inttet{\| \f a_{n}\|_{\f L^3}^2 \| \nabla \fn d - \nabla \ddn \|_{\Le}^2 }+  C_\delta \| \fn d   \|_{L^\infty (\f L^\infty)}^4  \inttet{\| \nabla \ddn \|_{\f L^3}\| \f a_{n}\|_{\Le}^2 } \,.
\end{align*}
For the abbreviation $\f a_{n}$ (see~\eqref{anm}), we observe
\begin{align}
\| \f a_{n}(t) \|_{\f L^2}^2 \leq{}& \frac{1}{k} \| \ddn \| _{ L^\infty(\f L^\infty)}^2  \| \nabla \ddn (t) \|_{\f L^6}^4 \| \fn d(t) - \ddn(t) \|_{\f L^6}^2 + \left ( \chi_{\|}+ \chi_{\bot}\right ) \| \ddn \|_{\f L^\infty} ^2 \|\HH \|_{\f L^\infty} ^2  \| \f H - \HH \|_{\f L^2}^2 \,,\label{anmL2}
\intertext{and}
\begin{split}
\| \f a_{n}(t) \|_{\f L^3}^2 \leq{}& \frac{1}{k} \| \ddn \| _{ L^\infty(\f L^\infty)}^2  \| \nabla \ddn (t) \|_{\f L^6}^4\left (  \| \fn d\|_{L^\infty(\f L^\infty)} + \|  \ddn  \|_{L^\infty(\f L^\infty)}\right )^2  
\\& 
+ \left ( \chi_{\|}+ \chi_{\bot}\right ) \| \ddn \|_{ L^\infty(\f L^\infty)} ^2 \|\HH \|_{\f L^\infty} ^2 \left (\| \f H\|_{\f L^3}+ \|\HH \|_{\f L^3}\right ) ^2 \,. 
\end{split}\label{anmL3}
\end{align}
From where we may infer that
\begin{multline*}
\inttet{\left (  \fn d \times \f a_{n}  , \ddn \times (\vvn \cdot \nabla) \ddn - \fn d \times ( \fn v \cdot \nabla ) \fn d 
\right )}
\leq {}\\   \inttet{\mathcal{K} \left (\mathcal{E}(\fn v ,\fn d,  \f H| \vvn , \ddn , \HH )+ \| \f H - \HH\|_{\Le}^2  \right )} + \delta \inttet{\mathcal{W}(\fn v, \fn d | \vvn, \ddn )}  
\,.
\end{multline*}
Again, we exemplified the calculations for the first terms, the other follow similar such that we observe from~\eqref{therest} 
 \begin{align*}
 \inttet{\left (\t \ddn - \t \fn d , 
\f a_{n} \right ) }{}
 \leq {}& \inttet{\mathcal{K}\left (\mathcal{E}(\fn v ,\fn d,\f H  | \vvn , \ddn,  \HH)+ \| \f H - \HH\|_{\f L^2 }^2 \right ) }+ \delta \inttet{\mathcal{W}(\fn v ,\fn d | \vvn, \ddn)}   \\&+ \inttet{ \left [\left ( \mathcal{A}(\vvn,\ddn )  , \begin{pmatrix}
 0 \\ \fn d \times \f a_{n}
 \end{pmatrix} \right ) +\left ({ \mathcal{A}}_n (\fn v,\fn d)  , \begin{pmatrix}
0\\\fn d \times (R_n \f a_{n}- \fn a )
\end{pmatrix}\right )\right ]} \\&+ \inttett{ ( \t \fn d (|\fn d|^2-1), R_n \fn a- \fn a) +\left (  \t \ddn + \fn d \times  \ddn \times \t \ddn , \f a_{n}  \right )}
\,.
 \end{align*}
Note that $\tilde{\mathcal{A}}_n (\fn v ,\fn d) \cdot ( 0,R_n\f a_{n}-\fn a )^T = \mathcal{A}_n (\fn v ,\fn d) \cdot ( 0,\fn d \times(R_n \f a_{n}-\fn a) )^T + ( \t \fn d (|\fn d|^2-1), R_n \fn a- \fn a) $ due to the properties of the cross product stated in Section~\ref{sec:not} and the property $\t \fn d \cdot \fn d=0$ of the solution to the approximate scheme (compare to~\eqref{tdumform}).

The last term on the right hand side can be transformed via~\eqref{tdumform} to
\begin{align*}
\inttet{\left (  \t \ddn + \fn d \times  \ddn \times \t \ddn , \f a_{n}  \right ) }= {} &\inttet{\left (  (1- |\ddn|^2) \t \ddn + \frac{1}{2}\t | \ddn|^2 \ddn ,  \f a_{n}  \right ) } 
\\
& {} +\inttet{\left (   \fn d \times  \ddn \times \t \ddn - \ddn \times \ddn \times \t \ddn ,   \f a_{n} \right ) } \,
\end{align*}
and the second line can be estimated by 
\begin{align*}
\inttet{\left (   \fn d \times  \ddn \times \t \ddn - \ddn \times \ddn \times \t \ddn ,   \f a_{n} \right ) } ={} &  \inttet{\left (   ( \fn d- \ddn )  \times  \ddn \times \t \ddn  ,   \f a_{n} \right ) }\\  \leq{}& \| \ddn \|_{L^\infty(\f L^\infty)} \inttet{\| \t \ddn \| _{ \f L^3}\| \f a_{n}\|_{\f L^2} \| \fn d - \ddn \|_{ \f L^6} } \,.
\end{align*}
Inserting everything back into~\eqref{therest} yields for the term $I_{14}$ with~\eqref{anmL2}  that
\begin{align*}
I_{14}\leq {}& 
\inttett{ \left ( \mathcal{A}(\vvn,\ddn )  , \begin{pmatrix}
 0 \\ \fn d \times \f a_{n}
 \end{pmatrix} \right ) +\left ({ \mathcal{A}}_n (\fn v,\fn d)  , \begin{pmatrix}
0\\\fn d \times  ( R_n\f a_{n}-\fn  a) 
\end{pmatrix}\right )}\\
&+\inttett{\left (  (1- |\ddn|^2) \t \ddn + \frac{1}{2}\t | \ddn|^2 \ddn ,  \f a_{n}  \right )  + ( \t \fn d ( | \fn d|^2 -1) , R_n \f a_n - \fn a) }  \\
&+ \inttet{\mathcal{K} \left (\mathcal{E}(\fn v ,\fn d,  \f H| \vvn , \ddn , \HH )+ \| \f H - \HH\|_{\f L^2 }^2 \right )} + \delta \inttet{\mathcal{W}(\fn v, \fn d | \vvn, \ddn )}   \,.
\end{align*}
In the \textbf{eighth and last step}, Gronwall's lemma is applied yielding the approximate relative energy inequality. 
Inserting all the calculations and estimates back into~\eqref{zurueck}, yields
\begin{align*}
 \frac{1}{2}&\mathcal{E}(\fn v ,\fn d,  \f H| \vvn , \ddn , \HH )(t) + \| \f H - \HH \|_{ \f L^2}^2 +(1-\delta) \inttet{\mathcal{W}(\fn v, \fn d | \vvn, \ddn ) }  \leq{}  \mathcal{D}_0(\fn v ,\fn d,  \f H| \vvn , \ddn , \HH )(0)   \\& + \inttett{\mathcal{K}\left (\mathcal{E}(\fn v ,\fn d,  \f H| \vvn , \ddn , \HH ) +\| \f H - \HH \|_{\f L^2}^2 \right ) +\langle (I-R_n)\t \dd , \f q(\dd) - \f q( \fn d)\rangle}
\\&+\inttett{ \left ( \mathcal{A}(\vvn,\ddn )  , \begin{pmatrix}
 \vvn-\fn v  \\ \fn d \times ( \tq_n - \fn q + \f a_{n})
 \end{pmatrix} \right ) +\left ({ \mathcal{A}}_n (\fn v,\fn d)  , \begin{pmatrix}
P_n \vvn -\vvn \\\fn d \times  ( R_n \f a_{n}-\f a_{n})
\end{pmatrix}\right )}\\&
+ \inttet{
\left [
\left (( 1 - |\ddn|^2 ) \t \ddn + \frac{ 1}{2}\t | \ddn|^2 \ddn, \tq_n-\fn q + \f a_{n}  \right ) + ( \t \fn d ( | \fn d|^2 -1) , R_n \f a_n - \fn a) 
\right ]
 }
\end{align*}
Note that we added $ \| \f H- \HH \|_{ \f L^2}^2$ to both sides of the inequality and used that this term is not time dependent. 
Choosing $\delta =1/2$ allows to absorb $\mathcal{W}$ on the left hand side and Gronwall's Lemma provides the assertion. 

\end{proof}
\begin{proof}[Proof of Theorem~\ref{thm:main}]
We argue that~\eqref{discreterelativeEnergy} converges to~\eqref{relenin} as $n\ra \infty$. First, we observe that for $\dd$ fulfilling $| \dd|=1$ the terms $(1- | \dd|^2) $ and $\t | \dd |^2 $ vanish. 
From~\eqref{regtest},~\eqref{qdefq}, and~\eqref{anmL3}, we may infer $ \| R_n \f a_{n} - \f a_n \|_{\f L^3}\ra 0$ as $n\ra \infty$. 
Additionally, we find $\| P_n \vvn- \vvn \|_{ \Hc \cap\,\V}\ra 0 $ as $n\ra \infty$. All terms in~\eqref{vdis} and~\eqref{ddis} are bounded in $(  \Hc \cap\,\V)^*$ and $\f L^{3/2}$, respectively. 
Therefore, the terms 
\begin{align*}
\int_0^t \Big ({ \mathcal{A}}_n (\fn v,\fn d)  , &\begin{pmatrix}
P_n \vvn -\vvn \\\fn d \times (R_n \f a_{n}-\f a_{n})
\end{pmatrix}\Big )
\leq \\{}& \left ( \| \t \fn v \|_{L^2(( \Hc \cap\,\V)^*)} +c \| \fn v  \|_{L^\infty(\f L^2)}\| \fn v \|_{L^2(\He)}  + \| \f g \|_{L^2( \Vd)} \right ) \| P_n \vvn- \vvn \|_{L^2( \Hc \cap\,\V)} \\
&+\left (2c \| \nabla \fn d \|_{L^\infty(\f L^2)} \| \fn d\|_{L^\infty(\f L^\infty)} ^2 \| \fn q\|_{L^2( \f L^2)} + c\| \f T^L_n\|_{ L^2(\f L^2)} \right ) \| P_n \vvn- \vvn \|_{L^2( \Hc \cap\,\V)} \\
& + \| \fn d \|_{L^\infty( \f L^\infty)}^2 \left ( \| \t \fn d\|_{L^2( L^{3/2})} + \| \fn v \|_{L^2(\f L^6)} \| \fn d \| _{ L^\infty( \He)}  \right ) \| R_n \fn a - \fn a \| _ {L^3} 
\\
& + \| \fn d \|_{L^\infty( \f L^\infty)}^2 \left (  (1+ |\lambda|) \| \fn v \|_{ L^2(\He)} \| \fn d \|_ {L^\infty(\f L^\infty)} + \| \fn q\|_{ L^2( \Le)} \right ) \| R_n \fn a - \fn a \| _ {L^2} 
\,\numberthis \label{abshtvn}
\end{align*}
vanish as $n \ra \infty$. 
Similarly, we find 
\begin{align*}
&\left ( \nabla( R_n -I)\t \dd ; \f \Lambda : \nabla \fn d + \fn d \cdot \f \Theta \dreidots \nabla \fn d \o \fn d \right ) \\
 & +  \left ( (R_n- I) \t \dd , \nabla \fn d  : \f \Theta \dreidots \nabla \fn d - \chi_{\|} \f H ( \fn d \cdot \f H ) + \chi_{\bot} \f H\times ( \f H \times \fn d) \right )+ ( \t \fn d ( | \fn d|^2 -1) , R_n \f a_n - \fn a) \\
& \leq{} \| (R_n - I) \t \dd \|_{\Hb} \left ( |\f \Lambda | \| \fn d \|_{ \He} + | \f \Theta | \| \fn d \|_{ \f L^\infty}^2 \| \fn d \|_{ \He} \right ) \\
&\quad + \| (R_n - I) \t \dd \|_{ \f L^\infty} \left (| \f \Theta | \| \fn d \|_{ \He}^2 + ( \chi_{ \|} + \chi_{ \bot} ) \| \f H\|_{\Le}^2\right )  \| \fn d\|_{ \f L^\infty}   + \| \t \fn d \|_{ \f L^{3/2} }( \| \fn d \|_{ \f L^\infty} ^2 + 1) \| (R_n - I) \fn a \|_{ \f L^3}\,.
\end{align*}
The right-hand side vanishes as $n \ra \infty$ since the $\Le$-projection $R_n$ is assumed to be stable in $\Hb$, $\f L^\infty$, and thus also in $\f L^3$. 
Note that the boundary values are constant in time such that $\t\dd\in \Hb$. 
The stability of the projection $R_n$ in $\f L^3$ yields that $ \tq_n \ra \tq $ in $\f L^3$ as $n\ra \infty$. 
Due to $| \dd|=1$,  we find for the Ericksen stress that $ \nabla\dd^T ( | \dd |^2 I - \dd \o \dd ) \tq=  \nabla \dd^T \tq$. 
 Due to an integration-by-parts formula~(see~\cite[Equation~21]{unsere}) this is equivalent to the formulation in~\eqref{eq:strong}. 
 Concerning the Leslie stress, we observe by the definition that in general $\tilde{\f T}^L_n \neq \tilde{\f T}^L$. Especially, it holds for $\tilde{\f T}^L$ given in~\eqref{Leslie} and $\tilde{\f T}^L_n$~(see \eqref{lesliedis} and Theorem~\ref{thm:dis})
 \begin{align*}
 \tilde{\f T}^L_n - \tilde{\f T}^L = ( 1 - | \dd|^2) \left ( ( \dd \o \tq ) _{\skw} + \lambda ( \dd \o \tq)_{\sym} \right ) + | \dd|^2 \left ( (\dd \o ( \tq - \tq_n )) _{ \skw} + \lambda ( \dd \o ( \tq -\tq_n ) )_{\sym}\right ) \,.  
 \end{align*}
 Due to $| \dd|=1$ and $\tq_n \ra \tq$ in $L^2(0,T; \f L^3)$, we find $  \tilde{\f T}^L_n \ra  \tilde{\f T}^L$ in $ L^2(0,T; \f L^3)$. 
Taken together, the approximate relative energy inequality~\eqref{discreterelativeEnergy} converges to the continuous relative energy inequality~\eqref{relEnergy}.

Taking in equation~\eqref{ddis} the test function $R_n (\rot{\fn d } ^T \f \zeta) $ for $\f \zeta \in \C^\infty(\Omega \times (0,T)) $,  and adding and subtracting $\rot{\fn d }^T \f \zeta$ yields
\begin{align*}
&\left (  \fn d \times \left ( \t \fn d + | \fn d| ^2\left (( \fn v \cdot \nabla ) \fn d -\skn v \fn d + \lambda \syn v \fn d + \fn q\right ) \right ), \f \zeta \right )\\ 
&=\left (  \t \fn d + \left ( | \fn d|^2 I - \fn d \o \fn d \right )( (\fn v \cdot \nabla ) \fn d - \skn v \fn d + \lambda \syn v\fn d+ \fn q )   , ( I - R_n  )(\fn d \times  \f \zeta )  \right ) 
 \end{align*}
The right-hand side can be estimated using~\eqref{dtdn}
\begin{align*}
& \left (  \t \fn d + \left ( | \fn d|^2 I - \fn d \o \fn d \right )( (\fn v \cdot \nabla ) \fn d - \skn v \fn d + \lambda \syn v\fn d+ \fn q )    , ( R_n - I ) (\fn d \times\f  \zeta)   \right ) 
\\
& \leq \| ( R_n - I )(\fn d \times \f  \zeta )\|_{ L^2(\f L^{3}) } \left (  \| \t \fn d\|_{L^2(\f L^{3/2})}  + \| \fn d \|_{L^\infty(\f L^{\infty})}^2  \|  \fn v\|_{L^2(\f L^6)}  \| \nabla \fn d\|_{ L^\infty(\Le)} \right ) 
\\&
\quad
+ \| ( R_n - I ) (\fn d \times \f  \zeta ) \|_{ L^2(\f L^2) } \| \fn d \|_{L^\infty(\f L^{\infty})}^2   \left ((1+|\lambda|) \| \fn v\|_{L^2(\He)}\| \fn d\|_{L^\infty(\f L^{\infty})}^2 + c  \| \fn d \times \fn q\|_{L^2(\Le)} \right ) \,
\end{align*} 
 such that the right-hand side converges to zero as $n \ra \infty$. 
 Furthermore, we observe 
 \begin{align*}
&\left (  \fn d \times \left ( \t \fn d + | \fn d| ^2\left (( \fn v \cdot \nabla ) \fn d -\skn v \fn d + \lambda \syn v \fn d + \fn q\right ) \right ), \f \zeta \right )\\ 
&=\left (  \fn d \times \left ( \t \fn d +( \fn v \cdot \nabla ) \fn d -\skn v \fn d + \lambda \syn v \fn d + \fn q \right ), \f \zeta \right )\\
&\quad + \left (  \fn d \times \left ( (| \fn d| ^2- 1)\left (( \fn v \cdot \nabla ) \fn d -\skn v \fn d + \lambda \syn v \fn d + \fn q\right ) \right ), \f \zeta \right )\,,
 \end{align*}
 where the last line can again be estimated by
 \begin{multline*}
 \left (  \fn d \times \left ( (| \fn d| ^2- 1)\left (( \fn v \cdot \nabla ) \fn d -\skn v \fn d + \lambda \syn v \fn d + \fn q\right ) \right ), \f \zeta \right )
\\
\leq {} \| \fn d\|_{ \f L^\infty} \left \| | \fn d |^2 -1 \right \|_{ \f L^3} \left (\| \fn v \|_{ \f L^6} \| \fn d \|_{ \He} +( 1+ | \lambda |) \| \fn v \|_{ \He} \| \fn d \|_{ \f L^6} + \| \fn q \|_{ \f L^{3/2}}\right )\,.
 \end{multline*}
 Due to Proposition~\ref{prop:norm} and Lebesgue's theorem of dominated convergence, we observe that $\| | \fn d|^2 -1\|_{ \f L^3}\ra 0$ as $n\ra \infty$ such that  the right-hand side converges to zero. The bound on the time derivative of $\f v $ (see~\eqref{timev}) follows from Proposition~\ref{prop:time}. 
 Taken together, the semi-discrete approximation scheme convergences to a dissipative solution in the sense of Definition~\ref{def:diss}. 
 
\end{proof}

\section{Optimal control\label{sec:opt}}
In this section, we are going to introduce an optimal control problem, where the Ericksen--Leslie equations equipped with the Oseen--Frank energy acts as a constraint. This constraint is inserted via the dissipative solvability concept in form of an inequality constraint and an additional equality constraint due to the director equation~\eqref{eq:mdir}. 
The general goal of this section is to approximate the optimal control problem in the way that the cost functional remains the same throughout the approximation and the dissipative solutions are approximated in the way introduced in the first part of this article.
\subsection{Optimal control problem\label{sec:optcon}}
We introduce an end time problem, \textit{i.e.}, the goal is to ``be near a desired state at a given time $T$ using a small control $\f H$''. 
Consider the following optimal control problem for $\f v_T\in \Ha$ and $\f d_T \in \He$:
\begin{center}
\begin{minipage}{0.9\linewidth}
\textbf{Continuous optimal control problem}\\
\begin{align}
\min_{ \| \f H \|_{ \f L^3} \leq c_{\f H}}  J( \f v, \f d , \f H ) : = \min_{ \| \f H \|_{ \f L^3} \leq c_{\f H}} \| \f v( T) - \f v_T \|_{\f L^2}^2 + \| \f d (T) - \f d _T \|_{\He}^2 + \gamma \| \f H\|_{\f L^2   }^2 \,.\label{opt1}
\end{align}
 for $\f v_T\in \Ha$ and $\f d_T \in \He$ given
such that there exists an $\f d \times \f q$, which fulfills   together with  $( \f v , \f d )$ the Definition~\ref{def:diss} for the magnetic field~$\f H$, where the dependence of $\mathcal{K}$ in Definition~\ref{def:diss} on the $\| \f H \|_{\f L^3} $-norm is replaced by $c_{\f H}$.
\end{minipage}
\end{center}
First we want to argue that there exists an optimal control. The proof relies on the standard procedure of variational calculus.
\begin{proposition}\label{prop:opt}
The continuous optimal control problem~\eqref{opt1} possesses an optimal control $\f H^*$ and an associated optimal state $( \f v^*,\f d^*)$. 
\end{proposition}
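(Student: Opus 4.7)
The plan is to apply the direct method of the calculus of variations. Write $J^* := \inf J \ge 0$ for the infimum over admissible triples, and let $\{\f H^k\}_{k\in\N}$ with $\|\f H^k\|_{\f L^3}\le c_{\f H}$ be a minimizing sequence. By Theorem~\ref{thm:main}, for each $k$ there exists a dissipative solution $(\f v^k,\f d^k,\f q^k)$ corresponding to $\f H^k$. First, I would derive uniform \textit{a priori} estimates on this sequence by inserting a fixed simple test function into the relative energy inequality~\eqref{relenin} (for instance $\vv=0$, $\dd=\Sr \f d_1$, $\HH=0$). The essential point is that in the definition of the continuous optimal control problem, the explicit dependence of $\mathcal{K}$ on $\|\f H\|_{\f L^3}$ is replaced by the constant $c_{\f H}$, so the exponential factor on the right-hand side of~\eqref{relenin} is bounded uniformly in $k$. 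This yields uniform bounds in all the spaces appearing in~\eqref{reldiss}.

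Next, applying the arguments of Proposition~\ref{prop:wkonv} and Corollary~\ref{cor:Cw} (which rely only on the \textit{a priori} estimates, not on the semi-discrete structure of~\eqref{eq:dis}), together with the boundedness of $\{\f H^k\}$ in $\f L^3$, I would extract a non-relabelled subsequence and a limit $(\f v^*,\f d^*,\f q^*,\f H^*)$ with $\|\f H^*\|_{\f L^3}\le c_{\f H}$ such that $\f v^k \stackrel{*}{\rightharpoonup} \f v^*$ in $L^\infty(\Ha)\cap L^2(\V)$, $\f d^k \stackrel{*}{\rightharpoonup} \f d^*$ in $L^\infty(\He)$ with strong convergence in $L^q(\f L^r)$ for $q\in[1,\infty)$ and $r\in[1,6)$, $\f d^k\times \f q^k \rightharpoonup \f d^*\times \f q^*$ in $L^2(\Le)$, and $\f H^k \rightharpoonup \f H^*$ in $\f L^3$ (hence also in $\f L^2$, as $\Omega$ is bounded). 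Moreover, $\f v^k(t)\rightharpoonup \f v^*(t)$ in $\Ha$ and $\f d^k(t)\rightharpoonup \f d^*(t)$ in $\He$ for every $t\in[0,T]$, and $|\f d^*|=1$ almost everywhere.

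The core step is to verify that $(\f v^*,\f d^*,\f q^*)$ is a dissipative solution with control $\f H^*$. For any fixed admissible test function $(\vv,\dd,\HH)$ independent of $k$, I would pass to the limit in~\eqref{relenin} and~\eqref{eq:mdir}. The left-hand side of~\eqref{relenin} is a positive quadratic form in $(\f v^k,\f d^k,\f H^k)$ which is convex and continuous with respect to the topologies identified above; for the mixed terms in $\f H^k$, such as $\|\f d^k\cdot \f H^k-\dd\cdot \HH\|_{\Le}^2$, expansion of the square and strong convergence of $\f d^k$ in $L^q(\f L^r)$ against the weakly converging $\f H^k\in \f L^3$ identifies the cross term, while weak lower semi-continuity of the remaining norm squared provides the inequality. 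On the right-hand side, $\mathcal{A}(\vv,\dd)$ and $\mathcal{K}$ are independent of $k$, while the pairing with $(\vv-\f v^k,\f d^k\times(\tq-\f q^k+\f a(\f d^k,\f H^k\mid\dd,\HH)))$ is linear in each of $\f v^k$, $\f q^k$, $\f H^k$ and contains $\f d^k$-dependent factors that converge strongly. Equation~\eqref{eq:mdir} passes to the limit by the same weak-strong compactness arguments used at the end of the proof of Theorem~\ref{thm:main}.

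Finally, weak lower semi-continuity of the cost functional follows from the weak convergences $\f v^k(T)\rightharpoonup \f v^*(T)$ in $\Ha$, $\f d^k(T)\rightharpoonup \f d^*(T)$ in $\He$, $\f H^k\rightharpoonup \f H^*$ in $\f L^2$, combined with the weak lower semi-continuity of the corresponding Hilbert-space norms. Hence $J(\f v^*,\f d^*,\f H^*)\le \liminf_k J(\f v^k,\f d^k,\f H^k) = J^*$, and since $(\f v^*,\f d^*,\f H^*)$ is admissible the infimum is attained. The principal difficulty will be identifying the limit in the nonlinear $\f H^k$-dependent contributions of the relative energy and of $\f a$; this is precisely the point highlighted in the introduction, and rests on the observation that, after expansion, $\f H^k$ is only ever paired with factors for which strong compactness of $\{\f d^k\}$ (from Corollary~\ref{cor:Cw}) is available, so that weak convergence of the controls themselves suffices.
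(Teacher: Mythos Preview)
Your proposal is correct and follows essentially the same route as the paper: direct method, the test function $(\vv,\dd,\HH)=(0,\Sr\f d_1,0)$ for uniform \textit{a priori} bounds (using that $\|\f H\|_{\f L^3}$ has been replaced by $c_{\f H}$ in $\mathcal{K}$), the compactness of Proposition~\ref{prop:wkonv} and Corollary~\ref{cor:Cw}, and weak lower semi-continuity of both the relative energy/dissipation and the cost functional. One small wording issue: a minimizing sequence for~\eqref{opt1} is a sequence of admissible \emph{triples} $(\f v^k,\f d^k,\f H^k)$, not just controls followed by an application of Theorem~\ref{thm:main}; the paper states this correctly, but your argument goes through unchanged once you take the full triples from the outset.
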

\begin{remark}
The optimal control $\f H^*$ is not necessarily unique.  Even the associated state $( \f v^*,\f d^*) $ to an optimal control $\f H^*$ may not be unique since the uniqueness for dissipative solutions is not known. 
The uniqueness of the optimal control is not known, even though the cost functional~$J$ in convex in $\f H$, because the solution set fulfilling Definition~\ref{def:diss} is not known to be convex with respect to $\f H$. 
\end{remark}

\begin{proof}
First, we observe that the set of possible solutions is not empty. Indeed  for every $\f H\in\f L^2$ with $\| \f H\|_{ \f L^3} \leq c_{\f H}$, there exists a dissipative solution (see Theorem~\eqref{thm:main}).

In the following, we consider a minimizing sequence $ ( \fn v,\fn d ,\fn H)$ of the optimization problem~\ref{opt1}. 
Choosing  $( \vv ,\dd , \HH) = ( 0, \Sr \f d_1 , 0)$ in the dissipative formulation~\eqref{def:diss} grants \textit{a priori} estimates. 
Here $\f d_1\in \Hrand{s-1}$ with $s\in [5/2,3]$ is the boundary condition (see~\eqref{anfang}) and $\Sr$ is the extension operator (see~Proposition~\ref{prop:fort}).
  Note that $\Sr \f d_1 \in \f H^s$ with $s \geq5/2$ such that $ \Sr \f d_1 \in \f W^{2,3} $.   Extracting weakly converging subsequences provides all convergences in~\eqref{wkonv} and
\begin{align}
   \fn H & \rightharpoonup \f H &\quad& \text{ in } L^{2} (0,T;\f L^2 )\,.\label{w:H}
\end{align}
The arguments to validate this convergences are essentially the same as in Proposition~\ref{prop:wkonv}. 
Note the additional convergence~\eqref{w:H} due to the boundedness of the cost functional~$J$. 

In the same way as in Corollary~\ref{cor:Cw}, we observe the convergences in~\eqref{conv:weak}. 
This allows to go to the limit in Definition~\ref{def:diss}. Note that the test functions $ ( \vv , \dd )$ remain fixed. Indeed, the relative energy $\mathcal{E}$ (see~\eqref{relEnergy}) is lower-semi-continuous with respect  to the convergences~\eqref{conv:weak}. 
Especially the convergence~\eqref{sr:d} and~\eqref{w:H} imply that
\begin{multline*}
- \chi_{\|} \|\f d\cdot \f H - \dd \cdot \HH \|_{L^2} ^2 - \chi_{\bot} \|\f d \times \f H- \dd \times \HH\|_{\Le}^2 + \| \f H - \HH \|_{ \f L^2}^2 \leq\\ \liminf_{n\ra \infty} \left (-\chi_{\|} \|\fn d\cdot \fn H - \dd \cdot \HH \|_{L^2} ^2 - \chi_{\bot} \| \fn d \times \fn H- \dd \times \HH\|_{\Le}^2+ \| \fn H - \HH \|_{ \f L^2}^2\right )\,.
\end{multline*}
The same holds true for the relative dissipation~$\mathcal{W}$ (see~\eqref{relW}) and the convergences~\eqref{wr:v}-\eqref{wr:dDd}. In the term $\mathcal{A}$, the weak convergences~\eqref{wkonv} and~\eqref{w:H} suffice to go to the limit in $(\fn v, \fn d,\fn d \times  \fn q)$. 
The term $\mathcal{K}$ (see~\eqref{K}) almost exclusively depends on the test functions $( \vv ,\dd )$. To go to the limit in $\mathcal{K}$ it is crucial that we replaced the $\| \f H\|_{ \f  L^3}$-norm by $c_{\f H}$ in~\eqref{opt1}.
Together, we may infer that the limit $( \f v, \f d , \f H)$ fulfills Definition~\ref{def:diss}. From~\eqref{cw:v},~\eqref{cw:d}, and~\eqref{w:H}, we conclude
\begin{align}
J (\f v , \f d , \f H)  \leq \liminf_{n\ra \infty} J(\fn v ,\fn d , \fn H) \,.\label{liminf}
\end{align}
Therefore, the infimum is actually attained and $\f H$ is an optimal control, whereas $(\f v , \f d )$ is the associated optimal state. 
\end{proof}

\subsection{Approximation of the optimal control problem\label{sec:optdis}}

The continuous problem~\eqref{opt1} is approximated by approximating the state equation and state inequality (due to the dissipative solvability concept) in the way executed in Section~\ref{sec:semiconv}. 
The approximate problems read as

\begin{center}
\begin{minipage}{0.9\linewidth}
\textbf{Approximate optimal control problem}\\
\begin{align}
\min_{\fn H \in Z_n,\, \| \fn H \|_{ \f L^3} \leq c_{ \f H}}  J( \fn v, \fn d , \fn H ) : = \min_{\fn H \in Z_n,\, \| \fn H \|_{ \f L^3} \leq c_{ \f H}}  \| \fn  v( T) - \f v_T \|_{\f L^2}^2 + \| \fn d (T) - \f d _T \|_{\He}^2 + \gamma \| \fn H\|_{\Le }^2 \,.\label{optn}
\end{align}
 for $\f v_T\in \Ha$ and $\f d_T \in \He$ given
such that   $( \fn v , \fn d )$ solves the approximate scheme~\eqref{eq:dis}. 
\end{minipage}
\end{center}
The optimal control problem~\eqref{optn} is an optimal control problem for ordinary differential equations.
Note that the inequality is replaced by an equality and the solution operator becomes differentiable.

Therefore, we do not comment on the solvability of this problem. 
We rather assume that problem~\eqref{optn} has a solution for every $n\in\N$ and show that a subsequence converges to an optimal control  in a suitable sense.
This suitable sense is rather weak, since we can only show that such sequence converges under additional assumption on the optimal control. Nevertheless, it is possible to show the lower-semi-continuity of the cost functional for every approximating sequence. 
It is remarkable that even though the control enters the system nonlinearly, only weak convergence of the control is needed to go to the limit in the formulation. Thus, boundedness of the control in some $\f L^p$ spaces suffice to use some weak compactness arguments and go to the limit for a subsequence. 
\begin{proposition}\label{prop:attain}
Let for every $n\in\N$ the triple $( \fn v, \fn d , \fn H)$ be a solution to the optimal control problem~\eqref{optn}. 
There exists $( \f v , \f d , \f H)$ fulfilling Definition~\ref{def:diss} and a  subsequence $\{( \fn v, \fn d ,\fn H)\}_{n\in\N}$ converging in the sense of~\eqref{wkonv} and~\eqref{w:H} to $( \f v , \f d , \f H)$. 
Furthermore, the inequality~\eqref{liminf} holds.

Assume additionally that there exists an  optimal control  and state  $( \vv ,\dd, \HH)$ of problem~\ref{opt1} fulfilling additional regularity assumptions, \textit{i.e.}, \eqref{regtest} with $| \dd| =1$   a.e.~in $\Omega\times (0,T)$ and $ \HH \in \f L^\infty $. 
Then the limit $( \f v ,\f d ,\f H)$ of the sequence $\{ ( \fn v, \fn d , \fn H)\} $ is an optimal control and state, \textit{i.e.}, $J( \fn v , \fn d ,\fn H ) \ra J( \f v ,\f d ,\f H ) = J ( \vv, \dd ,\HH)$ as $n\ra \infty$. 
\end{proposition}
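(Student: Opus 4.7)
The plan is to mimic the proof of Theorem~\ref{thm:main} for a sequence of approximate minimizers and then compare with a concrete competitor built from the regular optimal control. First I would verify that the minimizing sequence admits uniform bounds: the constraint $\|\fn H\|_{\f L^3}\leq c_{\f H}$ gives an $\f L^3$-bound, and testing the approximate problem with any feasible triple (\textit{e.g.}, $\fn H=0$) shows that $J(\fn v,\fn d,\fn H)\leq C$, which together with Proposition~\ref{prop:enrgydis}, Corollary~\ref{cor:H1}, and Proposition~\ref{prop:time} yields the full set of \textit{a priori} estimates of Section~\ref{sec:apri}, in particular a bound on $\|\fn H\|_{\Le}$. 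By the compactness arguments of Proposition~\ref{prop:wkonv} and Corollary~\ref{cor:Cw}, I can extract a (non-relabeled) subsequence satisfying all the convergences~\eqref{wkonv},~\eqref{conv:weak} together with $\fn H\rightharpoonup\f H$ in $L^2(\f L^2)$ and $\fn H\stackrel{*}{\rightharpoonup}\f H$ weakly-$*$ in $L^\infty(\f L^3)$.

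Next, I would pass to the limit in the approximate relative energy inequality~\eqref{discreterelativeEnergy} exactly as in the proof of Theorem~\ref{thm:main}: the terms $\langle (I-R_n)\partial_t\dd,\cdot\rangle$, $(P_n\vvn-\vvn,\cdot)$, $(R_n\f a_n-\f a_n,\cdot)$ and those involving $(1-|\ddn|^2)$, $\partial_t|\ddn|^2$ drop out in the limit, while the weak and weak-$*$ convergences suffice for the remaining terms because the test function $(\vv,\dd,\HH)$ is kept fixed. The reduced director identity~\eqref{eq:mdir} and the time-derivative bound~\eqref{timev} are obtained verbatim from the corresponding steps of the proof of Theorem~\ref{thm:main}. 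Since $\mathcal{K}$ in~\eqref{opt1} was made to depend on $c_{\f H}$ rather than on $\|\f H\|_{\f L^3}$ directly, the exponential weight is uniformly controlled along the sequence. Hence $(\f v,\f d,\f H)$ is a dissipative solution with control $\f H$. Weak/weak-$*$ lower semi-continuity of $\|\cdot\|_{\f L^2}$, $\|\cdot\|_{\He}$ and $\|\cdot\|_{\Le}$ against the convergences~\eqref{cw:v},~\eqref{cw:d},~\eqref{w:H} then yields~\eqref{liminf}, as in Proposition~\ref{prop:opt}.

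For the second part, given a regular optimal triple $(\vv,\dd,\HH)$ satisfying~\eqref{regtest} with $|\dd|=1$ and $\HH\in\f L^\infty$, I would build a competitor in the approximate problem. Let $\HH_n:=R_n\HH$; by the $\Hb$- and $\f L^\infty$-stability of $R_n$ one has $\HH_n\to\HH$ in every $\f L^p$ with $p<\infty$, and after possibly rescaling $\HH$ by $1-\varepsilon_n\to 1$ one may assume $\|\HH_n\|_{\f L^3}\leq c_{\f H}$. Let $(\tilde{\f v}_n,\tilde{\f d}_n)$ be the solution of the semi-discrete scheme~\eqref{eq:dis} driven by $\HH_n$. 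Applying the approximate relative energy inequality~\eqref{discreterelativeEnergy} with the test function $(\vv,\dd,\HH)$ (which is admissible by~\eqref{regtest}) and exploiting that $\mathcal{A}(\vv,\dd)=0$ (since the additional regularity of the optimal triple forces the classical equations to hold in the strong sense, a consequence of testing the relative energy inequality against arbitrary nearby regular pairs), together with the facts that $\mathcal{D}_0(\tilde{\f v}_n(0),\tilde{\f d}_n(0),\HH_n\,|\,\vv(0),\dd(0),\HH)\to 0$ and that all residual projection terms in~\eqref{discreterelativeEnergy} vanish as $n\to\infty$, Gronwall's lemma yields $\mathcal{E}(\tilde{\f v}_n(t),\tilde{\f d}_n(t),\HH_n\,|\,\vv(t),\dd(t),\HH)\to 0$ uniformly in $t\in[0,T]$. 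This gives $\tilde{\f v}_n(T)\to\vv(T)$ in $\Le$ and $\tilde{\f d}_n(T)\to\dd(T)$ in $\He$, hence $J(\tilde{\f v}_n,\tilde{\f d}_n,\HH_n)\to J(\vv,\dd,\HH)$. By optimality of $(\fn v,\fn d,\fn H)$ for~\eqref{optn} we have $J(\fn v,\fn d,\fn H)\leq J(\tilde{\f v}_n,\tilde{\f d}_n,\HH_n)$; combining with~\eqref{liminf} and the global optimality of $(\vv,\dd,\HH)$ gives
\begin{equation*}
J(\vv,\dd,\HH)\leq J(\f v,\f d,\f H)\leq\liminf_{n\to\infty}J(\fn v,\fn d,\fn H)\leq\limsup_{n\to\infty}J(\fn v,\fn d,\fn H)\leq J(\vv,\dd,\HH),
\end{equation*}
so all inequalities collapse to equalities, which is the claim.

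The main obstacle I anticipate is the convergence $(\tilde{\f v}_n,\tilde{\f d}_n)\to(\vv,\dd)$ driven by the projected control $\HH_n$. The relative energy inequality~\eqref{discreterelativeEnergy} is not sharp in the control variable: it contains the extra coupling term $\|\f H-\HH\|_{\f L^2}^2$ and the correction $\f a_n$, both of which depend on the difference of the magnetic fields, and the right-hand side carries an exponential weight $\exp(\int\mathcal{K})$ that depends on $\|\HH\|_{\f L^\infty}$ and on the regularity of $\dd$. Controlling these terms requires carefully using the regularity of $(\vv,\dd,\HH)$ and the fact that $R_n\HH\to\HH$ in $\f L^p$ for every finite $p$, so that the right-hand side of the Gronwall inequality vanishes together with $\|\HH_n-\HH\|_{\Le}$ and with the projection defects. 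A further subtlety is the feasibility issue $\|\HH_n\|_{\f L^3}\leq c_{\f H}$, handled by the rescaling $\HH\mapsto(1-\varepsilon_n)\HH$ (or, when $\|\HH\|_{\f L^3}<c_{\f H}$ strictly, directly by continuity of $R_n\HH\to\HH$ in $\f L^3$, which follows from the stability of $R_n$ and an interpolation between $\Hb$ and $\f L^\infty$).
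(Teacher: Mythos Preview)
Your proof is correct and follows essentially the same strategy as the paper: extract a convergent subsequence via the \textit{a priori} bounds of Section~\ref{sec:apri}, pass to the limit exactly as in Theorem~\ref{thm:main}, and for the second part build a competitor $(\tilde{\f v}_n,\tilde{\f d}_n,R_n\HH)$ to sandwich the cost. The paper's argument is organized slightly differently in that it first identifies the weak limit of the competitor sequence with $(\vv,\dd)$ via weak-strong uniqueness of dissipative solutions, and only then invokes~\eqref{discreterelativeEnergy} with test function $(\vv,\dd,\HH)$ to upgrade to norm convergence in $\C([0,T];\Ha)\times\C([0,T];\He)$, whereas you apply the approximate relative energy inequality directly; both routes ultimately rest on the regular optimal pair satisfying $\mathcal{A}(\vv,\dd)=0$, which the paper draws from the cited weak-strong theory and you justify by perturbation. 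Your treatment of the feasibility constraint $\|R_n\HH\|_{\f L^3}\leq c_{\f H}$ via rescaling is a detail the paper passes over.
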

\begin{proof}
Since $( \fn v, \fn d )$ solves the approximate scheme~\eqref{eq:dis} for every given $\fn H$ and for all  $n\in\N$, it can be shown in the same fashion as in the proof of Theorem~\ref{thm:dis} that the approximate relative energy inequality~\eqref{discreterelativeEnergy} is fulfilled for $( \fn v ,\fn d)$ with $\f H = \fn H$. 
The boundedness of the cost functional $J$ implies the existence of an $\f H\in \f L^3$ with $ \| \f H \|_{ \f L^3} \leq c_{\f H}$ and the  weak convergence $ \fn H \rightharpoonup \f H$ in $ \f L^2$. With the same line of reasoning as in the proof of Theorem~\ref{thm:main}, we can show that there exists $ ( \f v ,\f d, \f d \times \f q) $, which obey~\eqref{reldiss} and a subsequence converging to this triple in the sense of Theorem~\ref{thm:main} such that $( \f v , \f d ,\f d\times \f q)$ fulfills Definition~\ref{def:diss} for $\f H $. 
The inequality~\eqref{liminf} follows again by lower-semi-continuity with respect to the convergences~\eqref{wkonv} and~\eqref{w:H}.

Assume now that there exists an  optimal control  and state  $( \vv ,\dd, \HH)$ of problem~\eqref{opt1} fulfilling additional regularity assumptions, \textit{i.e.}, \eqref{regtest} with $| \dd| =1$   a.e.~in $\Omega\times (0,T)$ and $ \HH \in \f L^\infty $. 
We define $ \HH_n := R_n \HH$. Then there exists a solution $( \vv_n,\dd_n)$ to the approximate problem~\eqref{eq:dis} with magnetic field~$\HH_n$ for every $n\in\N$. 
The same line of reasoning as in the first part of the proof implies that there exists a subsequence $\{ ( \vv_n , \dd_n)\}_{n\in\N}$ that converges to a dissipative solution for the magnetic field $\HH$. Due to the additional regularity assumptions on $( \vv,\dd,\HH)$, this dissipative solution is known to be unique such that from the approximate relative energy inequality~\eqref{discreterelativeEnergy} and $\HH_n \ra \HH $ in $\f L^2$, we may infer that $ ( \vv_n ,\dd_n,\HH_n) \ra ( \vv ,\dd ,\HH)$ in $ L^2(0,T; \V )\times L^2(0,T; \He)\times \Le $ as $n\ra \infty$. In this case even a stronger convergence result follows, inserting $( \vv, \dd ,\HH)$ as test functions in the approximate relative energy inequality~\eqref{discreterelativeEnergy} even grants norm convergence such that  (see the proof of Theorem~\ref{thm:main})
\begin{align}
\vv_n & \ra \vv &\quad& \text{ in } \C ([0,T];\Ha) &\quad &\text{and}& \quad&
 \dd_n \ra \dd &\quad& \text{in }\C([0,T];\He)\,.\label{convinC}
\end{align}

Observe that $( \vv_n,\dd_n,\HH_n)$ is a solution to the approximate optimal control problem~\eqref{optn} such that it holds
\begin{align*}
J( \fn v ,\fn d ,\fn H) \leq J( \vv_n,\dd_n,\HH_n)
\end{align*}
for all $n\in\N$, where $( \fn v ,\fn d , \fn H)$ is assumed to solve~\eqref{optn}.
From the stronger convergence~\eqref{convinC} and $R_n \HH \ra \HH$ in $\f L^2$ we may conclude for the cost functional that $ J( \vv_n,\dd_n,\HH_n) \ra J( \vv,\dd,\HH)$. 
The inequality~\eqref{liminf} and the observation that $J( \vv,\dd,\HH)$ is actually a minimum of~\eqref{opt1} imply the assertion. 
\end{proof}

\begin{remark}
Without this additional regularity assumption in Proposition~\ref{prop:attain}, the solution of the problem is not known to be unique. In this case,
it is not clear what convergence to the solution means.
  It may be possible to consider the convergence of the sets of solutions to each other, for the convergence of the control parameter. But a difficulty arises since the solutions sets are not expected to be convex. 

Thus, the weak-strong optimal control scheme concept seems to be the best possibility up to now.
\end{remark}

\addcontentsline{toc}{section}{References}

\small
\bibliographystyle{abbrv}

\end{document}